\documentclass[10pt]{article}

\usepackage{amsmath, amsthm, amsfonts}%, amssymb,  amscd}
\usepackage{mathtools}
\usepackage{hyperref}
\hypersetup{citecolor=blue, colorlinks=true, linkcolor=black}
\usepackage{tikz}
\usetikzlibrary{arrows,decorations.pathmorphing,backgrounds,positioning,fit,matrix}
\usepackage{color}
\usepackage{enumitem}
\setlist[itemize]{leftmargin=.5in}
\setlist[enumerate]{leftmargin=.5in,topsep=3pt,itemsep=3pt,label=(\roman*)}
\usepackage{geometry}
\newgeometry{margin=2.5cm}
\usepackage{algorithm, algpseudocode}

\setlength{\parskip}{6pt}
\setlength\parindent{0pt}

% environments
\theoremstyle{plain}
\newtheorem{theorem}{Theorem}[section]

\newtheorem{lemma}[theorem]{Lemma}

\theoremstyle{definition}
\newtheorem{definition}[theorem]{Definition}

\theoremstyle{remark}
\newtheorem{remark}[theorem]{Remark}

\numberwithin{equation}{section}

\ifpdf
\DeclareGraphicsExtensions{.eps,.pdf,.png,.jpg}
\else
\DeclareGraphicsExtensions{.eps}
\fi

% % % % % % % % % % % % % % % % % % % % % % % % % % % % % % % % % % % % % % % % % % % % %
% % % % % % % % % % % % % % % % % % % % % % % % % % % % % % % % % % % % % % % % % % % % %
% % % % % % % % % % % % % % % % % % % % % % % % % % % % % % % % % % % % % % % % % % % % %

% LENGTHS

% MACROS
% % % % % % % % % % % % % % % % % % % % % % % % % % % %

\renewcommand{\epsilon}{\varepsilon}
\def\eps{\epsilon}

% % % % % % % % % % % % % % % % % % % % % % % % % % % %
% std spaces
\def\Ten{{\mathrm{Ten}}}
\def\Sym{{\mathrm{Sym}}}
\def\per{{\mathrm{per}}}

\def\forae{\text{ for a.e. }}
\def\Teps{{T^\epsilon}}
\def\Om{\Omega}
\newcommand{\sureps}[1]{\tfrac{#1}{\epsilon}}
\def\xsureps{\sureps{x}}
\def\ofxsureps{\big(\xsureps\big)}

\def\R{\mathbb{R}}
\def\N{\mathbb{N}}

\def\Npos{\N_{>0}}
\newcommand{\closure}[1]{\bar{#1}}

% % % % % % % % % % % % % % % % % % % % % % % % % % % %
% functional spaces

\def\fcnC{\mathcal{C}}
\def\fcnL{\mathrm{L}}
\def\fcnH{\mathrm{H}}
\def\fcnW{\mathrm{W}}

\def\calW{\mathcal{W}}
\def\calL{\mathcal{L}}
\def\calO{\mathcal{O}}

\def\calV{\mathcal{V}}
\def\calH{\mathcal{H}}

\def\famEfEq{\mathcal{E}}

% standard
\newcommand{\Lp}[1]{\fcnL^{#1}}
\def\Ld{{\Lp2}}
\def\Ldz{{\Lp2_0}}
\def\Li{{\Lp\infty}}
\def\Lu{{\Lp1}}
\def\Ldper{\fcnL^2_\per}

\newcommand{\Hk}[1]{\fcnH^{#1}}
\def\Hu{{\Hk1}}

\def\Huper{{\Hk1_\per}}

\newcommand{\of}[1]{(#1)}

\def\ofOm{\of\Omega}
\def\ofY{\of{Y}}

\newcommand{\Lti}[1]{(0,\Teps;#1)}

\def\Wper{{\fcnW_{\!\per}}}
\def\WperStar{{\fcnW_{\!\per}^*}}

\def\calLd{{\calL^2}}
\def\calWper{{\calW_{\hspace{-0.5pt}\per}}}
\def\calWperStar{{\calW_{\hspace{-0.5pt}\per}^*}}

% shortcuts
\def\LdOm{{\Ld\ofOm}}
\def\LdzOm{{\Ldz\ofOm}}

\newcommand{\HkOm}[1]{{\Hk{#1}\ofOm}}

\def\calLdOm{{\calLd\ofOm}}
\def\calWperOm{{\calWper\ofOm}}
\def\calWperStarOm{{\calWperStar\ofOm}}
\def\WperOm{{\Wper\ofOm}}
\def\WperStarOm{{\WperStar\ofOm}}

\def\WperY{{\Wper\ofY}}
\def\WperStarY{{\WperStar\ofY}}

\def\LdY{{\Ld\ofY}}

\def\calWnorm{\mathcal{W}}
\def\Wnorm{W}

% % % % % % % % % % % % % % % % % % % % % % % % % % % %
% equivalence classes
\def\bbrackl{\scalebox{2.1}[1.1]{\ensuremath{[}}}
\def\bbrackr{\scalebox{2.1}[1.1]{\ensuremath{]}}}
\newcommand{\eqc}[1]{\boldsymbol{#1}}
\newcommand{\eqcbis}[1]{\bbrackl{#1}\bbrackr}

\def\bbrackls{\scalebox{2.1}[1]{\ensuremath{[}}}
\def\bbrackrs{\scalebox{2.1}[1]{\ensuremath{]}}}
\newcommand{\eqcbiss}[1]{\bbrackls{#1}\bbrackrs}

% % % % % % % % % % % % % % % % % % % % % % % % % % % %
% spaces V, H, Wmic

%\def\spSHshort{{\spSshort_H}}

% norms and scalar product
\newcommand{\norm}[1]{\|#1\|}
	\newcommand{\normbig}[1]{\big\|#1\big\|}
	
\newcommand{\seminorm}[1]{|#1|}

\newcommand{\ps}[1]{(#1)}
	\newcommand{\psbig}[1]{\big(#1\big)}
	\newcommand{\psBig}[1]{\Big(#1\Big)}
\newcommand{\intMean}[1]{\langle{#1}\rangle}
	\newcommand{\intMeanbig}[1]{\big\langle{#1}\big\rangle}
	\newcommand{\intMeanBig}[1]{\Big\langle{#1}\Big\rangle}

	% brackets

% % % % % % % % % % % % % % % % % % % % % % % % % % % %
% convergence

% % % % % % % % % % % % % % % % % % % % % % % % % % % %
% other symbols
\def\contEmb{\hookrightarrow}

% % % % % % % % % % % % % % % % % % % % % % % % % % % %
% time derivatives
\def\dt{\partial_t}
\def\dtt{{\partial^2_t}}
\def\d{\mathrm{d}}

% % % % % % % % % % % % % % % % % % % % % % % % % % % %
% space derivatives
\def\pa{\partial}

\def\calA{\mathcal{A}}

% % % % % % % % % % % % % % % % % % % % % % % % % % % %
% sums & integrals & products

\makeatletter
\newcommand*{\shifttext}[2]{%
  \settowidth{\@tempdima}{#2}%
  \makebox[\@tempdima]{\hspace*{#1}#2}%
}
\makeatother

\newcommand{\middleintspec}[3]{ 
	\ensuremath{ \scalebox{1.2}[1.6]{$\int$}\raisebox{-6pt}{\shifttext{-6pt}{$\scriptstyle #1$}}
											\raisebox{10pt}{\shifttext{-6.5pt}{$\scriptstyle #2$}}
				\hspace{#3} 	}
}

% % % % % % % % % % % % % % % % % % % % % % % % % % % %
% differential operator
\newcommand{\nabx}{\ensuremath{\nabla_{\shifttext{-3pt}{$\scriptstyle x$}}\kern-1.5pt}}
\newcommand{\nabxn}[1]{\nabla^{#1}_{x}}
\newcommand{\naby}{\ensuremath{\nabla_{\shifttext{-3pt}{$\scriptstyle y$}}\kern-1.5pt}}

\newcommand{\nabxk}[1]{\ensuremath{\nabla^{#1}_{\shifttext{-3pt}{$\scriptstyle x$}}\kern-1.5pt}}
% % % % % % % % % % % % % % % % % % % % % % % % % % % %
% qf quantities FEHMM

%\newcommand{\superKj}[1]{#1^{K,\qfj}}

% % % % % % % % % % % % % % % % % % % % % % % % % % % %
% finite element spaces & bilinear forms

% % % % % % % % % % % % % % % % % % % % % % % % % % % %
% quantities epsilon & repeatedly used data
\def\aeps{a^\epsilon}

\def\tildeub{\tilde{u}}

\def\ueps{u^\epsilon}

\def\Aeps{\mathcal{A}^\eps}
\def\Beps{\mathcal{B}^\eps}
\def\BBeps{\eqc{\mathcal{B}}^\eps}

\def\Reps{\mathcal{R}^\eps}

\def\cini{u_{0}}		%{{u_\mathrm{ini}}}
\def\dini{u_1}		%{{v_\mathrm{ini}}}

% % % % % % % % % % % % % % % % % % % % % % % % % % % %
% other

\DeclareFontFamily{U}{mathx}{\hyphenchar\font45}
\DeclareFontShape{U}{mathx}{m}{n}{
      <5> <6> <7> <8> <9> <10>
      <10.95> <12> <14.4> <17.28> <20.74> <24.88>
      mathx10
      }{}
\DeclareSymbolFont{mathx}{U}{mathx}{m}{n}
\DeclareMathSymbol{\bigtimes}{1}{mathx}{"91}

\newcommand\twodigits[1]{%
   \ifnum#1<10 0#1\else #1\fi
}

\newenvironment{proofof}[1]
{\par\indent\textit{Proof of #1.~}}
{\hspace*{\fill}$\square$}
%\newenvironment{remark}{\refstepcounter{theorem}\par\indent{\em Remark~\arabic{section}.\arabic{theorem}.~}}{
%	
%}

% % % % % % % % % % % % % % % % % % % % % % % % % % % %	
%	requires \usepackage[section]{algorithm}	\usepackage{algpseudocode}

\makeatletter
\newenvironment{breakablealgorithm}
{% \begin{breakablealgorithm}
	\vspace{0.8em}
	\begin{center}
		\refstepcounter{algorithm}% New algorithm
		\hrule height.8pt depth0pt \kern2pt% \@fs@pre for \@fs@ruled
		\renewcommand{\caption}[2][\relax]{% Make a new \caption
			{\raggedright\textbf{\ALG@name~\thealgorithm} ##2\par}%
			\ifx\relax##1\relax % #1 is \relax
			\addcontentsline{loa}{algorithm}{\protect\numberline{\thealgorithm}##2}%
			\else % #1 is not \relax
			\addcontentsline{loa}{algorithm}{\protect\numberline{\thealgorithm}##1}%
			\fi
			\kern2pt\hrule\kern2pt
		}
	}{% \end{breakablealgorithm}
		\kern2pt\hrule\relax
	\end{center}
	\vspace{0.8em}
}
\makeatother

\def\cdotss{\cdot\cdot}
\def\otimess{\!\otimes\!}
\def\Cten{c}
\def\Pten{p}
\def\Dten{g}
\def\Gten{g}
\def\Hten{h}
\def\Kten{k}

\def\Yperfunc{\gamma}
\def\genTen{q}
\def\inds{n}

\def\chapalpha{5}

% % % % % % % % % % % % % % % % % % % % % % % % % % % % % % % % % % % % % % % % % % % % %
% % % % % % % % % % % % % % % % % % % % % % % % % % % % % % % % % % % % % % % % % % % % %
% % % % % % % % % % % % % % % % % % % % % % % % % % % % % % % % % % % % % % % % % % % % %

\newcommand{\ma}[1]{\textcolor{blue}{#1}\index {#1}}

% % % % % % % % % % % % % % % % % % % % % % % % % % % % % % % % % % % % % % % % % % % % %
% % % % % % % % % % % % % % % % % % % % % % % % % % % % % % % % % % % % % % % % % % % % %
% % % % % % % % % % % % % % % % % % % % % % % % % % % % % % % % % % % % % % % % % % % % %

% title, authors and headers

\newcommand{\email}[1]{\href{#1}{#1}}
\newcommand{\TheTitle}{Effective models and numerical homogenization for wave propagation in heterogeneous media on arbitrary timescales} 
\newcommand{\TheAuthors}{Assyr Abdulle and Timoth\'ee Pouchon}

\title{{\TheTitle}}
\author{
	Assyr Abdulle\footnotemark[1] \footnotemark[3]
	\and 
	Timoth\'ee Pouchon\footnotemark[2] \footnotemark[3]
}
\date{}

\usepackage{fancyhdr}
\pagestyle{fancy}
\fancyhf{}
%\fancyhead[LE]{\small{\textsc{\shortAuthors{}}}}
%\fancyhead[LO]{\small{\textsc{\shortTitle{}}}}
\fancyhead[RE,RO]{\thepage}

\ifpdf
\hypersetup{
	pdftitle={\TheTitle},
	pdfauthor={\TheAuthors}
}
\fi

\begin{document}

	% % % % % % % % % % % % % % % % % % % % % % % % % % % % % % % % % % % % % % % % % % % % %
	% TITLE % % % % % % % % % % % % % % % % % % % % % % % % % % % % % % % % % % % % % % % % %
	% % % % % % % % % % % % % % % % % % % % % % % % % % % % % % % % % % % % % % % % % % % % %
	
	\maketitle
	\thispagestyle{plain}
	
	\renewcommand{\thefootnote}{\fnsymbol{footnote}}
	\footnotetext[1]{%
		\email{assyr.abdulle@epfl.ch}\\
		ANMC, Institute of Mathematics, \'Ecole Polytechnique F\'ed\'erale de Lausanne,	Station 8, CH-1015 Lausanne, Switzerland
	}
	\footnotetext[2]{%
		\email{timothee.pouchon@ed.ac.uk}\\
		University of Edinburgh,
		School of Mathematics,
		James Clerk Maxwell Building,
		Peter Guthrie Tait Road,
		Edinburgh EH9 3FD, United Kingdom.
	}
	\footnotetext[3]{%
		The authors are partially supported by the Swiss National Foundation, grant No. 200020\_172710.
	}
	\renewcommand{\thefootnote}{\arabic{footnote}}

\begin{abstract}
A family of effective equations for wave propagation in periodic media for arbitrary timescales $\mathcal{O}(\eps^{-\alpha})$, where $\eps\ll1$ is the period of the tensor describing the medium, is proposed.
The well-posedness of the effective equations of the family is ensured without requiring a regularization process as in previous models 
[A. Benoit and A. Gloria, arXiv:1701.08600, 2017], 
[G. Allaire, A. Lamacz, and J. Rauch, arXiv:1803.09455, 2018].
The effective solutions in the family are proved to be $\eps$ close to the original wave in a norm equivalent to the $\Li(0,\eps^{-\alpha}T;\LdOm)$ norm.
In addition, a numerical procedure for the computation of the effective tensors of arbitrary order is provided.
In particular, we present a new relation between the correctors of arbitrary order, which allows to substantially reduce the computational cost of the effective tensors of arbitrary order. 
This relation is not limited to the effective equations presented in this paper and can be used to compute the effective tensors of alternative effective models.
\end{abstract}

\textbf{Keywords.} 
homogenization, effective equations, wave equation, heterogeneous media, long-time behavior, dispersive waves,
a priori error analysis, multiscale method

\textbf{AMS subject classifications.}
35B27, 74Q10, 74Q15, 35L05, 65M60, 65N30

% % % % % % % % % % % % % % % % % % % % % % % % % % % % % % % % % % % % % % % % % % % % %
% SECTION % % % % % % % % % % % % % % % % % % % % % % % % % % % % % % % % % % % % % % % %
% % % % % % % % % % % % % % % % % % % % % % % % % % % % % % % % % % % % % % % % % % % % %

\section{Introduction}

% wave equation in heterogeneous media
%In engineering,
The wave equation in heterogeneous media is widely used in many applications such as seismic inversion, medical imaging or the manufacture of composite materials.
We consider the following model problem: let $\Omega\subset\R^d$ be a hypercube and let $\ueps:[0,T]\times\Omega\to\R$ be the solution of
\begin{equation} 	\label{eq:waveEquation_ueps_alpha_intro}
%\dtt\ueps(t,x) - \nabx\cdot\big(a\ofxsureps\nabx\ueps(t,x)\big) = f(t,x)
\dtt\ueps(t,x) - \nabx\cdot\big(\aeps(x)\nabx\ueps(t,x)\big) = f(t,x)
\quad\text{in }(0,T]\times\Omega,
\end{equation}
where we require $x\mapsto\ueps(t,x)$ to be $\Omega$-periodic and the initial conditions $\ueps(0,x)$ and $\dt\ueps(0,x)$ are given.
As we allow the domain $\Omega$ to be arbitrarily large, \eqref{eq:waveEquation_ueps_alpha_intro} can be used to model wave propagation in infinite media. 
We assume here that the tensor $\aeps$ varies at the scale $\eps\ll1$ while the initial conditions and the source $f$ have wavelength of order $\mathcal{O}(1)$.
In such multiscale situations, standard numerical methods such as the finite element (FE) method or the finite difference (FD) method are accurate only if the size of the grid resolves the microscopic scale $\mathcal{O}(\eps)$.
Hence, as $\eps\to0$ or as the domain $\Omega$ grows the computational cost of the method becomes prohibitive
and multiscale numerical methods are needed.

% numerical methods
% 	no scale separation: methods not based on homogenization
% 	scale separation: method based on homogenization
Several multiscale methods for the approximation of \eqref{eq:waveEquation_ueps_alpha_intro} are available in the literature.
They can be divided into two groups (see \cite{AbH17} for a review).
First, the methods suited when the medium does not have scale separation: \cite{OwZ08}, \cite{JEG10,JiE12}, \cite{OwZ11}, and \cite{AbH14c}.
These methods rely on sophisticated finite element spaces relying on the solutions of localized problems at the fine scale.
Second, the methods suited when the medium has scale separation (i.e., a special structure of the medium is required).
These methods are built in the framework of the heterogeneous multiscale method (HMM):
the FD-HMM \cite{EHR11,ArR16} and the FE-HMM \cite{AbG11}.
In both methods, the effective behavior of the wave is approximated by solving micro problems in small sampling domains.

% homogenization
The FD-HMM and the FE-HMM rely on homogenization theory \cite{BLP78,San80,BaP89,JKO94,CiD99,MuT97}:
they are built to approximate the homogenized equation and thus provide approximations of $\ueps$ in an $\Li(0,T;\LdOm)$ sense.
The homogenization of the wave equation \eqref{eq:waveEquation_ueps_alpha_intro} is provided in \cite{BFM92}.
For a given sequence of tensors $\{\aeps\}_{\eps>0}$, we have the existence of a subsequence of $\{\ueps\}_{\eps>0}$ that converges weakly$^*$ in $\Li(0,T;\WperOm)$ to $u^0$ as $\eps\to0$ (definitions of the functional spaces are provided below). The limit $u^0$, called the \textit{homogenized solution}, solves the \textit{homogenized equation}
\begin{equation}	\label{eq:waveEquationHomo_intro}
\dtt u^0(t,x)-\nabx\cdot \big( a^0(x)\nabx u^0(t,x) \big) = f(t,x) \quad\text{in }(0,T]\times\Omega,
\end{equation} 
with the same initial conditions as for $\ueps$.
The homogenized tensor $a^0$ in \eqref{eq:waveEquationHomo_intro} is obtained as the $G$-limit of a subsequence of $\{\aeps\}_{\eps>0}$ (see \cite{Spa68,DeS73}).
In general, $a^0$ depends on the choice of the subsequence and thus no formula is available for its computation.
In this paper, we consider periodic media, i.e., we assume that the medium is described by 
\begin{equation}	\label{eq:assumPeriodic}
\aeps(x)=a\ofxsureps,\qquad \text{where } a(y) \text{ is $Y$-periodic},
\end{equation}
where $Y$ is a reference cell (typically $Y=(0,1)^d$).
Under assumption \eqref{eq:assumPeriodic}, $a^0$ is proved to be constant and an explicit formula is obtained (see e.g. \cite{BLP78,BaP89,JKO94,CiD99}):
it can be computed by means of the \textit{first order correctors}, which are defined as the solutions of \textit{cell problems} (i.e., elliptic equations in $Y$ based on $a(y)$ with periodic boundary conditions).
Therefore, in the periodic case the homogenized solution $u^0$ can be accurately approximated independently of $\eps$.

% long-time behavior
%	literature 	- \eps^{-2}  
%				- \eps^{-\alpha} Gloria: Taylor--Bloch expansion, 
However, for wave propagation on large timescales, $\ueps$ develops dispersive effects at the macroscopic scale that are not captured by $u^0$.
Furthermore, if the initial conditions or the source have high spatial frequencies (in between $\mathcal{O}(1)$ and $\mathcal{O}(\eps)$), the dispersion appears at shorter times.
Hence, to develop numerical homogenization methods for long-time propagation, or in high frequency regimes, new effective models are required.

The study of this dispersion phenomenon has recently been the subject of considerable interest.
Analyses for periodic media and timescales $\mathcal{O}(\eps^{-2})$ are provided in \cite{SaS91,Lam11b,DLS14a,DLS15,AbP16b,ABV15,AbP16} and numerical approaches are studied in \cite{AGS14,ArR14}.
A result for locally periodic media for timescales $\mathcal{O}(\eps^{-2})$ was also obtained in \cite{AbP18}.
For arbitrary timescales $\mathcal{O}(\eps^{-\alpha})$, $\alpha\in\N$, effective equations were proposed in \cite{ALR18} and \cite{BeG17}.
The well-posedness of these equations is obtained using regularization techniques:
in \cite{BeG17}, the regularization relies on the tuning of an unknown parameter, which poses problems in practice;
in \cite{ALR18} a filtering process is introduced (yet not tested in practice).

%
%However, the use of \eqref{eq:veps_Gloria_intro} in practice is problematic.
%Indeed, no procedure for the computation of $\gamma$ is available.
%In fact, numerical tests indicate that the range of acceptable values for $\gamma$ is narrow.
%More precisely, if $\gamma$ is too small, the equation is ill-posed and if $\gamma$ is too large, the solution $u$ of \eqref{eq:veps_Gloria_intro} does not describe $\ueps$ accurately (see \cite[section 5.4.3]{Pou17}).

% main results of the paper
In this paper, we present two main results first reported in \cite[Chap. \chapalpha]{Pou17}.
The first main result is the definition of a family of effective equations that approximate $\ueps$ for arbitrary timescales $\mathcal{O}(\eps^{-\alpha})$.
The effective equations, derived by generalizing the technique introduced for timescales $\mathcal{O}(\eps^{-2})$ in \cite{AbP16}, have the form 
\footnote{In the whole paper we use the shorthand $q\nabxn{n} v$ to denote the operator $q_{i_1\cdotss i_n} \pa^n_{i_1\cdotss i_n}v$, see \eqref{eq:notationDiffOperator_alpha}.}
\begin{equation}	\label{eq:effectiveEquation_res_alpha_intro}
\dtt\tildeub - a^0\nabxn 2\tildeub 
-\displaystyle\sum_{r=1}^{\lfloor \alpha/2\rfloor}(-1)^r\eps^{2r}
\big(a^{2r}\nabxn{2r+2}\tildeub 
- b^{2r}\nabxn{2r}\dtt\tildeub\big) 
=	f+\displaystyle\sum_{r=1}^{\lfloor \alpha/2\rfloor}(-1)^r\eps^{2r} b^{2r} \nabxn{2r} f
\quad\text{in }(0,\eps^{-\alpha}T]\times\Omega,
\end{equation}
where $a^0$ is the homogenized tensor and $a^{2r},b^{2r}$ are pairs of nonnegative, symmetric tensors of order $2r+2$ and $2r$, respectively, which satisfy constraints based on high order correctors, solutions of cell problems.
Note that the correction of the right-hand side generalizes the one introduced in the case $\alpha=2$ in \cite{ABV15} and discussed in \cite{AbP16}.
For all effective solutions $\tildeub$ in the family, we prove an error estimate that ensures $\tildeub$ to be $\eps$ close to $\ueps$ in the $\Li(0,\eps^{-\alpha}T;\Wnorm)$ norm (see \eqref{eq:definitionNormW}).
In contrast to the effective equations proposed in \cite{ALR18} and \cite{BeG17}, the well-posedness of \eqref{eq:effectiveEquation_res_alpha_intro} does not rely on regularization but is naturally ensured by the non-negativity of the tensors.
The unregularized versions of the {effective} equations from \cite{ALR18} and \cite{BeG17}
do not belong to the family {\eqref{eq:effectiveEquation_res_alpha_intro}} but are closely related:
they have the form \eqref{eq:effectiveEquation_res_alpha_intro} with 
$a^{2r}=g^{2r}$, $b^{2r}=0$ {and} these pairs of tensors satisfy the correct constraints. {The issue is however that} the sign of $g^{2r}$ happens to be negative for some $r$ (this is proved for $g^{2}$ \cite{COV06}) which results in the ill-posedness of the {unregularized version of these effective equations}.

% 2nd result
The second main result of the paper is an explicit procedure for the computation of the high order effective tensors $\{a^{2r},b^{2r}\}$ in \eqref{eq:effectiveEquation_res_alpha_intro}, for which we provide a new relation between the high order correctors.
In particular, while the natural formula to compute $a^{2r},b^{2r}$ requires to solve the cell problems of order $1$ to $2r+1$, this relation ensures that only the cell problems of order $1$ to $r+1$ are in fact necessary.
The consequence is a significant reduction of the computational cost needed to compute the effective tensors of arbitrary order.
We emphasize that this result can also directly be used to reduce the computational cost for the tensors of the effective equations from \cite{ALR18} and \cite{BeG17}.

% content
The paper is organized as follows.
In Section \ref{sec:mainresult}, we present our first main result: we derive the family of effective equations and state the error estimate. We then compare the obtained effective equations with the ones from \cite{ALR18} and \cite{BeG17}.
In Section \ref{sec:secondMainResult}, we construct a numerical procedure to compute the tensors of effective equations. 
We then present our second main result: a relation between the correctors which allows to reduce the computational cost of the effective tensors.
In Section \ref{sec:numexp_alpha}, we illustrate our theoretical findings in various numerical experiments.
Finally, in Section \ref{sec:proofs} we provide the proofs of the main results.

% % % % % % % % % % % % % % % % % % % % % % % % % % % % % % % % % % % % % % % % % % % % %

\subsection*{Definitions and notation}

Let us start by introducing some definitions and notations used in the paper.
Let $\Huper(\Om)$ be the closure of the space $\fcnC^\infty_\per(\Om)$ for the $\Hu$ norm.
We denote the quotient spaces $\calLd\ofOm = \Ld\ofOm/\R$
and $\calWper\of\Om = \Huper(\Om)/\R$.
The space $\Wper\of\Om$ (resp. $\LdzOm$) is composed of the zero mean representatives of the equivalence classes in $\calWper\of\Om$ (resp. $\calLd\ofOm$).
The dual space of $\Wper\of\Om$ (resp. $\calWper\of\Om$) is denoted $\WperStar\of\Om$ (resp. $\calWperStar\of\Om$).
The integral mean of $v\in\Lu(\Om)$ is denoted $\intMean{v}_\Omega = \frac{1}{|\Omega|} \int_\Omega v$
and $\ps{\cdot,\cdot}_\Omega$ denotes the standard inner product in $\Ld(\Omega)$.
We define the following norm on $\Wper\of\Om$
\begin{equation}	\label{eq:definitionNormW}
\norm{w}_{\Wnorm} 
= \inf_{\substack{w=w_1+w_2\\w_1,w_2\in\Wper\of\Om}}
\Big\{ \norm{w_1}_{\Ld\of\Om} + \norm{\nabla w_2}_{\Ld\of\Om} \Big\}
\quad \forall w\in\Wper\of\Om.
\end{equation}
Using the Poincar\'e--Wirtinger inequality, we verify that $\norm{\cdot}_{\Wnorm}$ is equivalent to the $\Ld$ norm:
$\norm{w}_{\Wnorm} \leq \norm{w}_{\Ld\of\Om} \leq \max\{1,C_\Om\} \norm{w}_{\Wnorm}$ 
where $C_\Om$ is the Poincar\'e constant.
%($C_\Om$ is the Poincar\'e constant)
%\begin{equation}	\label{eq:equivalenceNormLdNormW}
%\norm{w}_{\Wnorm} \leq \norm{w}_{\Ld\of\Om} \leq \max\{1,C_\Om\} \norm{w}_{\Wnorm}
%\quad \forall w\in\Wper\of\Om.
%\end{equation}

We denote $\Ten^n(\R^d)$ the vector space of tensors of order $n$. 
In the whole text, we drop the notation of the sum symbol for the dot product between two tensors and use the convention that repeated indices are summed.
The subspace of $\Ten^n(\R^d)$ of symmetric tensors is denoted $\Sym^n(\R^d)$, i.e., $\genTen\in\Sym^n(\R^d)$ iff
$\genTen_{i_1\cdots i_n} = \genTen_{i_{\sigma(1)}\cdots i_{\sigma(n)}}$ for any permutation of order $n$ $\sigma\in\mathbb{S}_n$.
We define the \textit{symmetrization operator} $S^n:\Ten^n(\R^d)\to\Sym^n(\R^d)$ as 
\begin{equation}	\label{eq:definitinosymmetrizationoperator}
\big(S^n(\genTen)\big)_{i_1\cdots i_n} = \frac{1}{n!} \sum_{\sigma\in \mathbb{S}_n} \genTen_{i_{\sigma(1)}\cdots i_{\sigma(n)}}.
\end{equation}
The coordinate $\big(S^n(\genTen)\big)_{i_1\cdots i_n}$ is denoted $S^n_{i_1\cdots i_n}\{\genTen_{i_1\cdots i_n}\}$.
We denote $=_S$ an equality holding up to symmetries, i.e., for $p,\genTen\in\Ten^n(\R^d)$ 
we have
\begin{equation}	\label{eq:equalityS}
p=_S\genTen  \qquad \Leftrightarrow \qquad S^n(p) = S^n(\genTen).
\end{equation}
A colon is used to denote the inner product of two tensors in $\Ten^{n}(\R^d)$,
$p:\genTen = p_{i_1\cdots i_n}\genTen_{i_1\cdots i_n}$.
We say that a tensor $q\in\Ten^{2n}(\R^d)$ is \textit{major symmetric} if it satisfies
\begin{equation}	\label{eq:majorSymDef_alpha}
\genTen_{i_1\cdots i_{n}i_{n+1}\cdots i_{2n}} = \genTen_{i_{n+1}\cdots i_{2n}i_1\cdots i_{n}}
\quad
1\leq {i_1\cdots i_{2n}} \leq d.
\end{equation}
We say that a tensor $\genTen\in\Ten^{2n}(\R^d)$ is \textit{positive semidefinite} if
\begin{equation}	\label{eq:tensorPositiveSemidef_alpha}
\genTen\xi:\xi
=
\genTen_{i_1\cdots i_{2n}} \xi_{i_1\cdots i_n} \xi_{i_{n+1}\cdots i_{2n}} \geq 0\quad
\forall \xi\in\Sym^n(\R^d),
\end{equation}
and it is \textit{positive definite} if 
the equality in \eqref{eq:tensorPositiveSemidef_alpha} holds only for $\xi=0$.
%\begin{equation}	\label{eq:tensorPositiveDef_alpha}
%\genTen_{i_1\cdots i_{2n}} \xi_{i_1\cdots i_n} \xi_{i_{n+1}\cdots i_{2n}} > 0\quad
%\forall \xi\in\Sym^n(\R^d)\backslash\{0\}.
%\end{equation}
The tensor product of $p\in\Ten^m(\R^d)$ and $\genTen\in\Ten^n(\R^d)$ is 
the tensor of $\Ten^{m+n}(\R^d)$ defined as
$(p\otimes q)_{i_1\cdots i_{m+n}} = p_{i_1\cdots i_m} \genTen_{i_{m+1}\cdots i_{m+n}}$.
%\[
%\otimes : \Ten^m(\R^d)\times \Ten^n(\R^d)\to \Ten^{n+m}(\R^d),\quad
%		(p,q) \mapsto (p\otimes q)_{i_1\cdots i_{m+n}} = p_{i_1\cdots i_m} q_{i_{m+1}\cdots i_{m+n}}.
%\]
Note that up to symmetries the tensor product is commutative, i.e.,
$p\otimes \genTen =_S \genTen\otimes p$.
We use the shorthand notation 
\[
\otimes^s \genTen = \underbrace{\genTen\otimes \cdots\otimes \genTen}_{s \text{ times}}.
\]
The derivative with respect to the $i$-th space variable $x_i$ is denoted $\pa_{i}$ and the derivation with respect to any other variable is specified.
For $q\in\Ten^n(\R^d)$, we denote the differential operator 
\begin{equation}	\label{eq:notationDiffOperator_alpha}
\genTen\nabxn n \coloneqq \genTen_{i_1\cdots i_n}\pa^n_{i_1\cdots i_n}.
\end{equation}

%Finally, we denote $I(d,n)$ the set of multiindices of the distinct entries of a tensor in $\Sym^{n}(\R^d)$, i.e.,
%\[
%I(d,n) = 
%\big\{	i = (i_1,\hdots,i_n) : 1\leq i_1\leq \hdots \leq i_n\leq d	\big\}.
%\]
%We verify that the cardinality of $I(d,n)$ is given by
%$ %\[
%N(d,n) = |I(d,n)| = \binom{d+n-1}{n}.
%$ %\]

% % % % % % % % % % % % % % % % % % % % % % % % % % % % % % % % % % % % % % % % % % % % %

\subsection*{Settings of the problem}

Recall assumption \eqref{eq:assumPeriodic}:  $\aeps(x) = a\big(\xsureps\big)$, where $y\mapsto a(y)$ is a $d\times d$ symmetric, $Y$-periodic tensor.
In addition, we assume that $a(y)$ is uniformly elliptic and bounded, i.e., there exists $\lambda,\Lambda>0$ such that
\begin{equation}	\label{eq:uniformEllipticity_lpd}
\lambda |\xi|^2 \leq a(y)\xi\cdot\xi \leq \Lambda |\xi|^2	\quad \forall\xi\in\R^d~ \forae y\in Y.
\end{equation}
Without loss of generality, let the reference cell be $Y=(0,\ell_1)\times\cdots\times(0,\ell_d)$.
We assume that the hypercube $\Omega=(\omega^l_{1},\omega^r_{1})\times\cdots\times (\omega^l_{d},\omega^r_{d})$ satisfies
\begin{equation}	\label{eq:assumptionOmega}
n_i = \frac{\omega^r_{i}-\omega^l_{i}}{\ell_i\eps} \in \Npos	\quad \forall i=1,\hdots,d.
\end{equation}
In particular, \eqref{eq:assumptionOmega} ensures that for a $Y$-periodic function $\gamma$, the map $x\mapsto\gamma\ofxsureps$ is $\Omega$-periodic ($\gamma$ is extended to $\R^d$ by periodicity).
Note that the integers $n_i$ in \eqref{eq:assumptionOmega} can be arbitrarily large. In particular $n_i$ can be of order $\mathcal{O}(\eps^{-\alpha})$.

Given an integer $\alpha\geq0$, we consider the wave equation: $\ueps:[0,\eps^{-\alpha}T]\times\Omega\rightarrow\R$ such that
\begin{equation}	\label{eq:waveEquationEpsilon_lpd}
\begin{array}{ll}
\dtt \ueps(t,x)-\nabx\cdot \big(a\big(\xsureps\big)\nabx\ueps(t,x)\big) = f(t,x) 
&\text{in }(0,\eps^{-\alpha} T]\times\Omega,\\[1pt]
x\mapsto\ueps(t,x)~~\Omega\text{-periodic}				&\text{in }[0,\eps^{-\alpha} T],\\[1pt]
\ueps(0,x)=\cini(x), \quad \dt\ueps(0,x) = \dini(x) &\text{in }\Omega,
\end{array}
\end{equation} 
where $\cini,\dini$ are given initial conditions and $f$ is a source.
The well-posedness of \eqref{eq:waveEquationEpsilon_lpd} is proved in \cite{LiM68,Eva98}: if $\cini\in\WperOm$, $\dini\in\LdzOm$, and $f\in\Ld\Lti\LdzOm$, then there exists a unique weak solution $\ueps\in\Li\Lti\WperOm$ with $\dt\ueps\in\Li\Lti\LdzOm$ and $\dtt\ueps\in\Ld\Lti\WperStarOm$.

% % % % % % % % % % % % % % % % % % % % % % % % % % % % % % % % % % % % % % % % % % % % %
% SECTION % % % % % % % % % % % % % % % % % % % % % % % % % % % % % % % % % % % % % % % %
% % % % % % % % % % % % % % % % % % % % % % % % % % % % % % % % % % % % % % % % % % % % %

\section{First main result: family of effective equations and a priori error estimate}	\label{sec:mainresult}

In this section, we present the family of effective equations and provide the corresponding a priori error estimate. 
In \ref{subsec:derivationFamily_subsec}, we derive the family in three steps:
(i) we discuss the ansatz on the form of the effective equations; 
(ii) using asymptotic expansion we derive the high order cell problems;
(iii) we obtain the constraints on the effective tensors by investigating the well-posedness of the cell problems.
In \ref{subsec:mainresult_subsec}, we define rigorously the family of effective equations and state the a priori error estimate.
Finally, in \ref{subsec:comparisonGloria_alpha} we compare the obtained equations with the other effective equations available in the literature. 
For the sake of readability we postpone the technical proofs to Section \ref{sec:proofs}.

% % % % % % % % % % % % % % % % % % % % % % % % % % % % % % % % % % % % % % % % % % % % %
% % % % % % % % % % % % % % % % % % % % % % % % % % % % % % % % % % % % % % % % % % % % %
\subsection{Derivation of the family of effective equations}	\label{subsec:derivationFamily_subsec}

In the whole derivation, we assume that the data are as regular as necessary. The specific requirements are stated in Theorem \ref{thm:est_epsm_alpha}.
Note that we consider here timescales $\eps^{-\alpha}T$ with $\alpha\geq 2$.
For timescales  $\eps^{-\alpha}T$ with $\alpha<2$ it can be shown following similar techniques that the standard homogenized equation is a valid effective model (see \cite[section \chapalpha.1.1]{Pou17}).

% % % % % % % % % % % % % % % % % % % % % % % % % % % % % % % % % % % % % % % % % % % % %
\subsubsection*{Ansatz on the form of the effective equations}
We first discuss the ansatz on the form of the effective equations, which has a major importance in the derivation.
We assume that the effective equations have the form
\begin{equation}	\label{eq:effectiveEquation_alpha}
%	\begin{array}{ll}
\dtt\tildeub  - a^0\nabxn 2\tildeub 
-\displaystyle\sum_{r=1}^{\lfloor\alpha/2\rfloor}(-1)^r\eps^{2r}
\Big(a^{2r}\nabxn {2r+2}\tildeub 
- b^{2r}\nabxn {2r}\dtt\tildeub\Big)
= Qf
\quad\text{in }(0,\eps^{-\alpha}T]\times\Omega,
\end{equation}
where $a^0$ is the homogenized tensor \eqref{eq:defhomogenizedTensor_alpha},
$a^{2r}\in\Ten^{2r+2}(\R^d)$, $b^{2r}\in\Ten^{2r}(\R^d)$ are tensors to be defined
and $Q$ is a differential operator to be defined (the construction of $Qf$ is discussed in Remark \ref{rem:explanationQf2}).
As discussed in \cite{AbP16}, if the set of considered equations is too small, we end up with ill-posed equations.
In particular, without the operators $-b^{2r}\nabxn {2r}\dtt$ in \eqref{eq:effectiveEquation_alpha}, our derivation would lead to the same ill-posed effective equations obtained in \cite{ALR18} and \cite{BeG17} (the unregularized versions).

Following the classical Faedo--Galerkin method (see \cite[Chap. 7]{Eva98}), we prove the following well-posedness result for \eqref{eq:effectiveEquation_alpha}.
We define the bilinear forms
\begin{align*}
\psbig{v,w}_\calH &= 
\psbig{v,w}_\Ld 
+ \sum_{r=1}^{\lfloor\alpha/2\rfloor} \eps^{2r} 
\psbig{b^{2r}_{i_1\cdots i_{2r}}\pa^{r}_{i_1\cdots i_{r}} v, \pa^{r}_{i_{r+1}\cdots i_{2r}}w}_\Ld ,	
\\
A\big( v,w\big) &=
\psbig{ a^0\nabla v,\nabla w}_\Ld
+ \sum_{r=1}^{\lfloor\alpha/2\rfloor} \eps^{2r} 
\psbig{a^{2r}_{i_1\cdots i_{2r+2}}\pa^{r+1}_{i_1\cdots i_{r+1}} v, \pa^{r+1}_{i_{r+2}\cdots i_{2r+2}}w}_\Ld,
\end{align*}
and the associated Banach spaces
\[
\calH = \big\{ v\in\LdzOm\,:\,
\ps{v,v}_\calH  <\infty \big\},
\qquad
\calV = \big\{ v\in\WperOm\,:\,
\ps{v,v}_\calV  <\infty \big\}.
\]
We call a function $\tildeub\in\Li\Lti{\calV}$, with $\dt\tildeub\in\Li\Lti{\calH}$, a weak solution of \eqref{eq:effectiveEquation_alpha} if for all test functions $v \in\fcnC^2([0,\Teps];\calV)$, with $v(\Teps)=\dt v(\Teps)=0$,
$\tildeub$ satisfies
\begin{equation}	\label{eq:BoussinesqEquationWeakForm_app}
\begin{aligned}	
\middleintspec{0}{\Teps}{-10pt} 
\psbig{\tildeub(t),\dt^2 v(t)}_\calH + A\big(\tildeub(t),v(t)\big)\, \d t
= &\middleintspec{0}{\Teps}{-10pt} \psbig{Qf(t),v(t)}_\LdOm\, \d t
%\nonumber\\&
- \psbig{ \dini,v(0)}_\calH + \psbig{ \cini,\dt v(0)}_\calH.
\end{aligned}
\end{equation}

\begin{theorem}	\label{thm:wp}
	Assume that the tensors $a^{2r}\in\Ten^{2r+2}(\R^d)$, $b^{2r}\in\Ten^{2r}(\R^d)$ are positive semidefinite \eqref{eq:tensorPositiveSemidef_alpha} and satisfy the major symmetries \eqref{eq:majorSymDef_alpha}. 
	Furthermore, assume that the data satisfy the regularity
	\[
	\cini\in\WperOm \cap\fcnH^{\lfloor\alpha/2\rfloor+1}(\Omega),
	\quad
	\dini\in\LdzOm \cap\fcnH^{\lfloor\alpha/2\rfloor}(\Omega),
	\quad
	Qf\in\Ld(0,\Teps;\LdzOm),
	\]
	Then there exists a unique weak solution of 
	\eqref{eq:effectiveEquation_alpha}.% (in the sense \eqref{eq:BoussinesqEquationWeakForm_app}).
\end{theorem}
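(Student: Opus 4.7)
The plan is to follow the classical Faedo--Galerkin procedure of \cite[Chap. 7]{Eva98}, adapted to the presence of higher-order spatial derivatives in both the inertial form $(\cdot,\cdot)_\calH$ and the elliptic form $A(\cdot,\cdot)$. The key structural observation is that the two hypotheses in the statement give these forms exactly the abstract properties needed: major symmetry of the $a^{2r}, b^{2r}$ makes $(\cdot,\cdot)_\calH$ and $A(\cdot,\cdot)$ symmetric, while positive semidefiniteness makes them nonnegative. Combined with the lowest-order contributions --- $\|v\|_\Ld^2$ in $(v,v)_\calH$, and the term bounded below by $\lambda\|\nabla v\|_\Ld^2$ in $A(v,v)$ coming from the uniform ellipticity \eqref{eq:uniformEllipticity_lpd} of $a^0$ --- this endows $\calH$ and $\calV$ with Hilbert-space norms that dominate the $\Ld$ and $\Hu$ norms on $\LdzOm$ and $\WperOm$, respectively.

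First I would take a Galerkin basis $\{w_k\}_{k\geq 1}$ consisting of eigenfunctions of the periodic Laplacian on $\LdzOm$: these are smooth, $\Ld$-orthogonal, and belong to $\calV$. Seek $\tildeub_m(t)=\sum_{k=1}^m d_k^m(t)\, w_k$ by imposing
\[
\psbig{\dtt\tildeub_m(t),w_k}_\calH + A\big(\tildeub_m(t),w_k\big) = \psbig{Qf(t),w_k}_\LdOm,\qquad k=1,\dots,m,
\]
with initial data taken as the $\calV$-projections of $\cini,\dini$, which have finite $\calV$- and $\calH$-norm by the assumed Sobolev regularity. The mass matrix $\bigl(\psbig{w_k,w_j}_\calH\bigr)_{k,j=1}^m$ is positive definite (it dominates the $\Ld$ Gram matrix, which is diagonal and positive), so the Galerkin system is a well-posed linear second-order ODE admitting a unique classical solution on $[0,\Teps]$.

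The core of the argument is the energy identity obtained by multiplying the $k$-th equation by $\ddtbig d_k^m(t)$ and summing, which, by symmetry of the two forms, yields
\[
\ddt \tfrac{1}{2}\Big\{ \psbig{\dt\tildeub_m,\dt\tildeub_m}_\calH + A\big(\tildeub_m,\tildeub_m\big) \Big\}
= \psbig{Qf,\dt\tildeub_m}_\LdOm.
\]
Bounding the right-hand side by Cauchy--Schwarz (with $\|\dt\tildeub_m\|_\Ld^2 \leq \psbig{\dt\tildeub_m,\dt\tildeub_m}_\calH$) and integrating in time gives uniform-in-$m$ bounds for $\tildeub_m$ in $\Li(0,\Teps;\calV)$ and for $\dt\tildeub_m$ in $\Li(0,\Teps;\calH)$. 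A weak-$*$ compactness argument extracts a subsequence converging to a candidate $\tildeub$, and density of $\bigcup_m \mathrm{span}\{w_1,\dots,w_m\}$ in $\calV$ lets us pass to the limit in the Galerkin identity to recover \eqref{eq:BoussinesqEquationWeakForm_app}. Uniqueness is obtained by the classical Ladyzhenskaya trick: given two solutions with identical data, their difference satisfies the homogeneous problem, and testing against $v(t)=\int_t^s \tildeub(\tau)\,\d\tau$ on $[0,s]$, integrating by parts in time, and invoking symmetry closes a Gr\"onwall-type inequality that forces $\tildeub\equiv 0$.

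The main delicacy is that positive semidefiniteness, rather than definiteness, of the higher-order tensors means their contributions yield only seminorms on $\nabxn{r+1}v$ and $\nabxn{r}v$; any regularity of $\tildeub$ beyond the baseline $\Li\Sti{\Hu}\cap\fcnW^{1,\infty}\Sti{\Ld}$ must be measured directly through $A(\tildeub,\tildeub)$ and $\psbig{\dt\tildeub,\dt\tildeub}_\calH$, not through $\fcnH^{r+1}$ or $\fcnH^{r}$ norms. Fortunately, this is precisely the level of regularity recorded in the conclusion $\tildeub\in\Li(0,\Teps;\calV)$, $\dt\tildeub\in\Li(0,\Teps;\calH)$, so the scheme closes without requiring any coercivity of the higher-order tensors beyond what is assumed.
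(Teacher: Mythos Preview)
Your proposal is correct and follows essentially the same Faedo--Galerkin strategy that the paper sketches: Galerkin approximations in a smooth basis of $\WperOm$, well-posedness of the finite-dimensional ODE via positivity of the $\calH$-mass matrix, uniform energy bounds exploiting $(v,v)_\calH\geq\|v\|_\Ld^2$ and $A(v,v)\geq\lambda\|\nabla v\|_\Ld^2$, weak-$*$ compactness, and passage to the limit. Your added detail on uniqueness via the Ladyzhenskaya test function is standard and consistent with the paper's (unspecified) uniqueness claim.
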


Let us provide a short sketch of the proof.
We look for successive approximations of a weak solution in the form $u^m(t) = \sum_{k=0}^m u^m_k(t)\varphi_k$, where
$\{\varphi_k\}_{k\in\N}$ is a smooth basis of $\WperOm$.
For each $m$, $u^m(t)$ is obtained as the solution of a well-posed ordinary differential equation.
We then prove that the sequence $\{u^m\}_{m\geq0}$ is bounded in $\Li\Lti{\calV}$.
In particular, note that the sign assumptions on the tensors ensure that $\ps{v,v}_\calH \geq \norm{v}_\Ld^2$ for any $v\in\calH$ and 
and $A(v,v)\geq \lambda \norm{\nabla v}_\Ld^2$ for all $v\in\calV$.
We thus obtain the existence of a subsequence that weakly$^*$ converges in $\Li\Lti{\calV}$.
We can then prove that the weak$^*$ limit is the unique weak solution.

%
%To ensure the well-posedness of \eqref{eq:effectiveEquation_alpha}, we assume that $a^{2r}\in\Ten^{2r+2}(\R^d)$, $b^{2r}\in\Ten^{2r}(\R^d)$ are positive semidefinite \eqref{eq:tensorPositiveSemidef_alpha} and satisfy the major symmetries \eqref{eq:majorSymDef_alpha}. 
%Indeed, on the first hand, the sign assumption allows to verify that the bilinear forms
%\begin{align*}
%\psbig{v,w}_\calH &= 
%\psbig{v,w}_\Ld 
%+ \sum_{r=1}^{\lfloor\alpha/2\rfloor} \eps^{2r} 
%\psbig{b^{2r}_{i_1\cdots i_{2r}}\pa^{r}_{i_1\cdots i_{r}} v, \pa^{r}_{i_{r+1}\cdots i_{2r}}w}_\Ld ,	
%\\
%A\big( v,w\big) &=
%\psbig{ a^0\nabla v,\nabla w}_\Ld
%+ \sum_{r=1}^{\lfloor\alpha/2\rfloor} \eps^{2r} 
%\psbig{a^{2r}_{i_1\cdots i_{2r+2}}\pa^{r+1}_{i_1\cdots i_{r+1}} v, \pa^{r+1}_{i_{r+2}\cdots i_{2r+2}}w}_\Ld .
%\end{align*}
%satisfy
%$\ps{v,v}_\calH \geq \norm{v}_\Ld^2$
%and 
%$A(v,v)\geq \lambda \norm{\nabla v}_\Ld^2$.
%On the second hand, the symmetry allows to prove essential energy estimates (similar as in \cite[Chap. 7]{Eva98}).
%Then, under sufficient regularity of the data, we are able to prove existence and uniqueness of a weak solution of \eqref{eq:effectiveEquation_alpha}.
%
%Define the spaces

% % % % % % % % % % % % % % % % % % % % % % % % % % % % % % % % % % % % % % % % % % % % %
\subsubsection*{Asymptotic expansion, inductive Boussinesq tricks}
We make the ansatz that $\ueps$ can be approximated by an adaptation of $\tildeub$ of the form
\begin{equation}	\label{eq:adaptation1_alpha} 
\Beps\tildeub(t,x) 
= 	\tildeub(t,x) 	+ \sum_{k=1}^{\alpha+2} \eps^k u^k\big(t,x,\xsureps\big),
%\qquad
%u^k(t,x,y) = \chi^k(y) \nabxn k\tildeub(t,x)
%= \chi^k_{i_1\cdotss i_k}(y) \pa^k_{i_1\cdotss i_k}\tildeub(t,x),
\end{equation}
where $u^k$ are to be defined and the map $y\mapsto u^k(t,x,y)$ is $Y$-periodic.  
We split the error as
\begin{equation*} %\label{eq:errorSplit}´
\norm{\ueps - \tildeub}_{\Li(0,\eps^{-\alpha}T;\Wnorm)}
\leq
\norm{\ueps - \Beps\tildeub}_{\Li(0,\eps^{-\alpha}T;\Wnorm)}
+ \norm{\Beps\tildeub-\tildeub }_{\Li(0,\eps^{-\alpha}T;\Wnorm)},
\end{equation*}
%Define the operator $\Aeps= -\nabx\cdot\big(a\ofxsureps\nabx \cdot\big)$.
and follow the argument presented in \cite{AbP16} based on the error estimate of Lemma \ref{lem:errorestimateindepoftimeanddomain} (see also \cite[Section 4.2.2]{Pou17}):
for $\norm{\ueps - \Beps\tildeub}_{\Li(0,\eps^{-\alpha}T;\Wnorm)}$ to be of order $\mathcal{O}(\eps)$, we need the terms involving $\tildeub$ in the remainder 
%$r^\eps = (\dtt+\Aeps)(\Beps\tildeub - \ueps)$ 
\begin{equation}	\label{eq:defSmallreps}
r^\eps=(\dtt + \Aeps) (\Beps\tildeub -\ueps),
\qquad
\Aeps v = -\nabx\cdot\big(a\ofxsureps\nabx v\big),
\end{equation}
to be of order $\mathcal{O}(\eps^{\alpha+1})$ in the $\Li(0,\eps^{-\alpha}T;\WperStarOm)$ norm
(it is sufficient that this holds in the stronger $\Li(0,\eps^{-\alpha}T;\LdOm)$ norm).
We now expand $r^\eps$:
using the equation for $\ueps$ \eqref{eq:waveEquationEpsilon_lpd} and the form of the adaptation 
\eqref{eq:adaptation1_alpha}, we obtain
\begin{equation}	\label{eq:waveAsymptoticDeveloppment_alpha}
\begin{aligned}
r^\eps =\,
&\phantom{+}\,\eps^{-1} \big(
%\makebox[\mylengtha][l]{$	
\phantom{\dtt\tildeub}
\phantom{{\,+\,}}
{\calA_{yy}u^1}
{\,+\,}{\calA_{xy}\tildeub}
\phantom{^1\,+\,}\phantom{\calA_{xx}\tildeub}\phantom{\,-\,f}	\,
%	$}
\big)\\
&{+}\,\eps^{0\phantom{-}} \big(	
%	\makebox[\mylengtha][l]{$		
{\dtt\tildeub}
{\,+\,}{\calA_{yy}u^2}
{\,+\,}{\calA_{xy}u^1} 
{\,+\,}{\calA_{xx}\tildeub}	
{\,-\,f\,}
%	$}
\big) \\
&{+} \sum_{k=1}^\alpha \eps^k\big(	\dtt u^k + \calA_{yy}u^{k+2} + \calA_{xy}u^{k+1} + \calA_{xx}u^{k}\, \big)
~~+~~\mathcal{R}^\eps_{\mathrm{ini}},
\end{aligned}
\end{equation}
where the operators $\calA_{yy},\calA_{xy},\calA_{xx}$ are defined as
\begin{eqnarray*}
	\calA_{yy} v= -\naby\cdot\big( a(y)\naby v\big),
	\quad
	\calA_{xy} v= -\naby\cdot\big( a(y)\nabla_x v\big) -\nabla_x\cdot\big( a(y)\naby v\big),
	\quad
	\calA_{xx} v= -\nabla_x\cdot\big(a(y)\nabla_x v\big),
\end{eqnarray*}
and the remainder is
\begin{equation} \label{eq:Repsini}
\mathcal{R}^\eps_{\mathrm{ini}}
= 
\,\eps^{\alpha+1} 	 \big(  \dtt u^{\alpha+1} + \calA_{xy}u^{\alpha+2} +\calA_{xx}u^{\alpha+1} 	\big)
\,{+}\,\,\eps^{\alpha+2} \big(  \dtt u^{\alpha+2} + \calA_{xx}u^{\alpha+2}\big).
\end{equation}
Classical two scale asymptotic expansion \cite{BLP78} advises to look for $u^k$ of the form
\begin{equation}	\label{eq:adaptation2_alpha} 
u^k(t,x,y) = \chi^k(y) \nabxn k\tildeub(t,x)
= \chi^k_{i_1\cdotss i_k}(y) \pa^k_{i_1\cdotss i_k}\tildeub(t,x),
\end{equation}
where the components of the tensor $\chi^k$ are $Y$-periodic functions to be defined.$  $

The next step is the main difficulty of the derivation: we must use the effective equation \eqref{eq:effectiveEquation_alpha} to substitute all the time derivatives in the terms of order $\mathcal{O}(\eps^0)$ to $\mathcal{O}(\eps^{\alpha})$ in \eqref{eq:waveAsymptoticDeveloppment_alpha}.
Various versions of this process have been used in related works \cite{AbP16,AbP16b,ABV15,DLS15,FCN02a,Lam11} to obtain well-posed effective equations from ill-posed ones.	To refer to this kind of manipulation the term \textit{Boussinesq trick} was coined in \cite{ABV15} (see \cite{CMV96} where such tricks are used for several Boussinesq's type of equations).

As these \textit{inductive Boussinesq tricks} represent a technical challenge,
let us explain here the case $\alpha=2$ and $f=0$
and postpone the general case to Section \ref{subsec:proofbousstrick} (Theorem \ref{thm:rewritedtttildeub_2_alpha} and Lemma \ref{lem:inductiveBoussinesqTricks_alpha}).
For $\alpha=2$ and $f=0$, the effective equation \eqref{eq:effectiveEquation_alpha} can be written as
\[
\dtt\tildeub  =
a^0\nabxn 2\tildeub -\eps^{2}\Big(a^{2}\nabxn{4}\tildeub - b^{2}\nabxn{2}\dtt\tildeub\Big).
\]
We now use the equation to substitute $\dtt\tildeub$ in the last term and obtain
\begin{align*}
\dtt\tildeub  =
a^0\nabxn 2\tildeub -\eps^{2}\Cten^1\nabxn{4}\tildeub
+ \mathcal{R}_2^\eps\tildeub ,
\qquad%\\
%\text{where}\quad
\Cten^1 = a^2 -a^0\otimes b^2,
\quad
\mathcal{R}_2^\eps\tildeub 
= \eps^4 \big( a^2\otimes b^2 \nabxn{6}\tildeub - b^2\otimes b^2\nabxn{4}\dtt\tildeub\big).
\end{align*}
Using \eqref{eq:adaptation2_alpha} and the two last equalities, we find that the time derivatives in \eqref{eq:waveAsymptoticDeveloppment_alpha} can be written as
\begin{align*}
\dtt\tildeub + \eps\dtt u^1 + \eps^2 \dtt u^2
&= \dtt\tildeub + \eps\chi^1\nabx\dtt\tildeub + \eps^2 \chi^2\nabxn{2}\dtt\tildeub
\\&= a^0\nabxn{2}\tildeub 
+ \eps a^0\otimes\chi^1\nabxn{3}\tildeub
+ \eps^2 \big(-\Cten^1 + a^0\otimes\chi^2\big)\nabxn{4}\tildeub
+ \mathcal{R}^\eps\tildeub,
\end{align*}
where the remainder $\mathcal{R}^\eps\tildeub$ has order $\mathcal{O}(\eps^3)$ in the $\Li(0,\eps^{-\alpha}T;\LdOm)$ norm (for $\tildeub$ sufficiently regular).
Using this expression in \eqref{eq:waveAsymptoticDeveloppment_alpha}, we obtain the desired development in the case $\alpha=2$ and $f=0$.

This process is generalized in Theorem \ref{thm:rewritedtttildeub_2_alpha} and Lemma \ref{lem:inductiveBoussinesqTricks_alpha}.
With these results, we are able to rewrite $r^\eps$ in \eqref{eq:waveAsymptoticDeveloppment_alpha} without time derivative in the terms of order $\mathcal{O}(\eps^0)$ to $\mathcal{O}(\eps^{\alpha})$:
defining the tensors $\Cten^r\in\Ten^{2r+2}(\R^d)$ inductively and $\Pten^{k}\in\Ten^{k+2}(\R^d)$ as
\begin{equation}	\label{eq:definitionCrPk_first}
\begin{gathered}
\Cten^0 = a^0,
\qquad
\Cten^r = a^{2r} - \sum_{\ell=0}^{r-1} \Cten^\ell \otimes b^{2(r-\ell)} 
\quad 1\leq r\leq \lfloor\alpha/2\rfloor,
\\
\Pten^{2r} = (-1)^r c^r,
\qquad
\Pten^{2r+1} = 0,
\qquad
0\leq r\leq \lfloor\alpha/2\rfloor,
\end{gathered}
\end{equation}
we obtain
\begin{equation}	\label{eq:waveAsymptoticDeveloppment_2_alpha} 
\begin{aligned}
r^\eps=
&\phantom{+}\,\eps^{-1} \big( \calA_{yy}u^1 + \calA_{xy}\tildeub \big)\\
&+ \sum_{k=0}^{\alpha}	\eps^{k}
\Big(\calA_{yy}u^{k+2} + \calA_{xy}u^{k+1} + \calA_{xx}u^{k} 
+	\textstyle\sum\limits_{j=0}^k \Pten^{j}\otimes\chi^{k} 
\Big) \nabxn{k+2}\tildeub\\
& +\mathcal{S}^\eps f + \mathcal{R}^\eps_{\mathrm{ini}}\tildeub + \mathcal{R}^\eps \tildeub.
\end{aligned}
\end{equation}
Provided sufficient regularity of $\tildeub$, the remainder $\mathcal{R}^\eps \tildeub$ has order $\mathcal{O}(\eps^{\alpha+1})$ in the $\Li(0,\eps^{-\alpha}T;\LdOm)$ norm.
Furthermore, provided sufficient regularity of $f$, the remainder $\mathcal{S}^\eps f$ has order $\mathcal{O}(\eps)$ in the $\Li(0,\eps^{-\alpha}T;\LdOm)$ norm (this is sufficient for $\tildeub$ and $\ueps$ to be $\eps$ close, see Theorem \ref{thm:est_epsm_alpha}).  
Using the definition of $u^k$ in \eqref{eq:adaptation2_alpha} and of $\calA_{yy},\calA_{xy},\calA_{xx}$, we verify that (for $k=0$ use $u^0 = \tildeub$ and $\chi^0=1$)
\begin{align*}
\calA_{yy}	u^{k+2}	&= -\naby \cdot\big( a\naby \chi^{k+2}_{i_1\cdotss i_{k+2}} \big) \pa^{k+2}_{i_1\cdotss i_{k+2}} \tildeub,\\
\calA_{xy}	u^{k+1}	&= \Big(
-\naby \cdot\big( ae_{i_1}\chi^{k+1}_{i_2\cdotss i_{k+2}} \big) 
- e_{i_1}^T a \naby \chi^{k+1}_{i_2\cdotss i_{k+2}}
\Big) \pa^{k+2}_{i_1\cdotss i_{k+2}} \tildeub,\\
\calA_{xx}	u^k	&= - e_{i_1}^Tae_{i_2}\chi^{k}_{i_3\cdotss i_{k+2}}  \pa^{k+2}_{i_1\cdotss i_{k+2}} \tildeub,
\end{align*}
where $e_i$ denotes the $i$-th vector of the canonical basis of $\R^d$.
Hence, canceling successively the terms of order $\calO(\eps^{-1})$ to $\calO(\eps^{\alpha})$ in \eqref{eq:waveAsymptoticDeveloppment_2_alpha}, 
we obtain the \textit{cell problems}:
the correctors $\{\chi^{k}_{i_1\cdotss i_{k}}\}_{k=1}^{\alpha+2}$ are the functions in $\WperY$ such that 
\begin{subequations}		\label{eq:cellProblemsWave_compact}
	\begin{align}
	\psbig{ a\naby \chi^1_i,\naby w}_Y = \,&-\psbig{ a e_i, \naby w}_Y, 
	\qquad\forall w\in\WperY,
	\label{eq:cellProblemsWave_compact_1}\\
	\begin{split}	\label{eq:cellProblemsWave_compact_k}
	\psbig{ a\naby \chi^{k+1}_{i_1\cdotss i_{k+1}},\naby w}_Y 
	= \,&S^{k+1}_{i_1\cdotss i_{k+1}}\Big\{	
	- \psbig{ a e_{i_1}\chi^k_{i_2\cdotss i_{k+1}}, \naby w}_Y
	+ \psbig{ a (\naby\chi^k_{i_2\cdotss i_{k+1}} + e_{i_2}\chi^{k-1}_{i_3\cdotss i_{k+1}}),e_{i_1}w }_Y
	\\&\phantom{S^{k+1}_{i_1\cdotss i_{k+1}}\big\{}
	- \textstyle\sum\limits_{j=0}^{k-1} 	
	\psbig{ 
		\big(\Pten^{k-1-j}\otimes \chi^{j}\big)_{i_1\cdotss i_{k+1}}
		, w}_Y  
	\Big\}
	\qquad\forall w\in\WperY, 
	\end{split}
	\end{align}
\end{subequations}
where the tensors $\Pten^k$ are defined in \eqref{eq:definitionCrPk_first}
and the symmetrization operator is defined in \eqref{eq:definitinosymmetrizationoperator}.
Note that the cell problems for $\chi^1_i$ and $\chi^2_{ij}$ are the same as {the cell problems} obtained with two-scale asymptotic expansion \cite{BLP78}.
%Note that these cell problems can be rewritten in a compact form given in \eqref{eq:cellProblemsWave_compact}. 

\begin{remark}	\label{rem:correctorsAreSymmetric}
	In \eqref{eq:cellProblemsWave_compact}, we have chosen symmetric right-hand sides.
	This is possible thanks to the symmetry of $\nabxn{n}\tildeub$ in \eqref{eq:waveAsymptoticDeveloppment_2_alpha}, as a term of the form $q\nabxn {n}\tildeub$ can be rewritten as $S^n(q)\nabxn {n}\tildeub$.
	This choice ensures that the correctors are symmetric tensors functions.
	In particular, $\chi^k$ has only $\binom{k+d-1}{k}$ distinct entries instead of $d^k$ if it was not symmetric. 
	When it comes to numerical approximation, each distinct entry of the corrector corresponds to a cell problem to solve and this symmetrization saves computational time.
	In addition, this choice ensures that the odd order cell problems are well-posed unconditionally (see below).
\end{remark}

% % % % % % % % % % % % % % % % % % % % % % % % % % % % % % % % % % % % % % % % % % % % %
\subsubsection*{Constraints on the high order effective tensors}

The last step in the derivation of the family of effective equations is to obtain the constraints on the effective tensors by imposing the well-posedness of the cell problems.
In order to investigate the solvability of \eqref{eq:cellProblemsWave_compact}, 
let us state the following classical result
(obtained with the Fredholm alternative or the Lax--Milgram theorem combined with the characterization of $\WperStarY$).
\begin{lemma}	\label{lem:solvcond}
	For an elliptic and bounded tensor $a(y)$, consider the following variational problem:
	find $v\in\WperY$ such that
	\begin{equation}	\label{eq:genericEllipticEquation}
	\psbig{a\naby v, \naby w}_Y = \psbig{f^1,\naby w}_{Y} + \psbig{f^0,w}_{Y}
	\quad\forall w\in\WperY,
	\end{equation}
	where $f^0,f^1_1,\hdots,f^1_d$ are given functions.
	Then \eqref{eq:genericEllipticEquation} has a unique solution $v\in\WperY$
	if and only if
	\begin{equation}	\label{eq:solvabilityConditionCalWper_ch5}
	f^0,f^1_1,\hdots,f^1_d\in\Ld\of Y
	\quad\text{and}\quad
	\psbig{f^0,1}_{Y} = 0.
	\end{equation}
\end{lemma}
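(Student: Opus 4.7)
The lemma is the standard Fredholm/Lax--Milgram result for elliptic problems on the periodic torus, and the only subtlety is bookkeeping around the kernel of constants. I would organize the argument in three steps: continuity, coercivity on $\WperY$, and the existence/uniqueness dichotomy, identifying at each step where the conditions in \eqref{eq:solvabilityConditionCalWper_ch5} enter.

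\emph{Step 1 (continuity).} Define $\mathfrak{a}(v,w) = (a\naby v,\naby w)_Y$ and $L(w) = (f^1,\naby w)_Y + (f^0,w)_Y$. The boundedness assumption \eqref{eq:uniformEllipticity_lpd} on $a$ gives $|\mathfrak{a}(v,w)| \leq \Lambda\,\|\naby v\|_{\Ld(Y)}\|\naby w\|_{\Ld(Y)}$, so $\mathfrak{a}$ is continuous on $\HuperY\times\HuperY$. By Cauchy--Schwarz, $L$ is a bounded linear functional on $\HuperY$ if and only if $f^0, f^1_1,\ldots,f^1_d \in \Ld(Y)$, which accounts for the first part of \eqref{eq:solvabilityConditionCalWper_ch5}.

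\emph{Step 2 (coercivity on $\WperY$).} Ellipticity of $a$ yields $\mathfrak{a}(v,v) \geq \lambda\,\|\naby v\|_{\Ld(Y)}^2$. Since $\WperY$ consists of zero-mean $Y$-periodic $\Hu$ functions, the Poincar\'e--Wirtinger inequality on $Y$ provides a constant $C_Y>0$ with $\|v\|_{\Ld(Y)} \leq C_Y\,\|\naby v\|_{\Ld(Y)}$, so $\mathfrak{a}(v,v) \gtrsim \|v\|_{\HuperY}^2$ on $\WperY$.

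\emph{Step 3 (Fredholm alternative and conclusion).} View the problem in the quotient $\calWperY = \HuperY/\R$, with which $\WperY$ is isomorphic via zero-mean representatives. The operator $\mathcal{A}:\HuperY\to\HuperY^*$ given by $\mathcal{A}v = \mathfrak{a}(v,\cdot)$ is self-adjoint, and by ellipticity its kernel is exactly the constants $\R\subset\HuperY$; its range is therefore the closed subspace of $\HuperY^*$ annihilating $\R$. Consequently the equation $\mathcal{A}v = L$ admits a (zero-mean) solution if and only if $L$ vanishes on the constants, i.e.\ $L(1) = \langle f^0\rangle_Y = 0$, and this solution is unique in $\WperY$ by the zero-mean normalization. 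Under \eqref{eq:solvabilityConditionCalWper_ch5}, the same conclusion follows more directly by applying the Lax--Milgram theorem to $\mathfrak{a}$ on $\WperY$, once one checks that $L$ then descends to a well-defined bounded linear functional on $\calWperY$ (which is precisely what the zero-mean condition on $f^0$ ensures).

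\emph{Main difficulty.} There is no genuine obstacle; the result is classical. The only care required is to track when $\langle f^0\rangle_Y = 0$ enters: it is at once the Fredholm compatibility condition for the operator $-\naby\cdot(a\naby\cdot)$ on the periodic cell and the condition for the right-hand side to define a bounded linear functional on the quotient $\calWperY = \HuperY/\R$.
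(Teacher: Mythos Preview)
Your proposal is correct and matches the paper's approach exactly: the paper does not give a detailed proof but simply notes that the lemma is a classical result ``obtained with the Fredholm alternative or the Lax--Milgram theorem combined with the characterization of $\WperStarY$,'' which is precisely the argument you have sketched. One small quibble: in Step~1 the claim that $L$ is bounded on $\HuperY$ \emph{if and only if} $f^0,f^1_i\in\Ld(Y)$ is not literally true (the representation of elements of the dual is not unique), but since the lemma is stated with $\Ld$ inner products this condition is really a well-definedness hypothesis rather than a genuine solvability constraint, and your treatment of the zero-mean condition on $f^0$ as the actual Fredholm compatibility condition is spot on.
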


We now proceed to the two following tasks:
first, we verify that the odd order cell problems satisfy unconditionally the solvability condition \eqref{eq:solvabilityConditionCalWper_ch5};
second, we impose the solvability condition \eqref{eq:solvabilityConditionCalWper_ch5} on the right-hand sides of the even order cell problems to obtain the constraints on the effective tensors.

First note that \eqref{eq:cellProblemsWave_compact_1} is well-posed as its right-hand side unconditionally satisfies \eqref{eq:solvabilityConditionCalWper_ch5}.
Next, we verify that the cell problem for $\chi^2$ is well-posed as the homogenized tensor satisfies
\begin{equation}	\label{eq:defhomogenizedTensor_alpha}
\Pten^0 _{ij} %= \Cten^0_{ij} 
= a^0_{ij} 
= \frac{1}{|Y|}\psbig{a(\naby\chi^1_{j}+e_j),e_i}_{Y}
= \intMeanbig{e_i^Ta(\naby\chi^1_{j}+e_j)}_Y,
\end{equation}
which ensures the solvability condition \eqref{eq:solvabilityConditionCalWper_ch5} to hold.
We then continue this process to derive the constraints on the higher order effective tensors imposed by the well-posedness of the higher order cell problems.

We assume that the cell problems are well-posed up to order $2r$, $r\geq 1$.
Consider the cell problem for $\chi^{2r+1}$.
Recalling that $\Pten^{2r-1}=0$ and as the correctors $\chi^{1},\hdots,\chi^{2r}$ have zero mean,
we verify that the solvability condition \eqref{eq:solvabilityConditionCalWper_ch5} 
is equivalent to the following relation between the correctors.
\begin{lemma}	\label{lem:lemmawpodd_alpha}
	For any $1\leq r \leq \lfloor \alpha/2\rfloor$, the correctors $\chi^{2r}$ and $\chi^{2r-1}$ satisfy the equality
	\[
	\Gten^{2r-1}_{i_1\cdotss i_{2r+1}}
	\coloneqq
	\psBig{ a \big(\naby\chi^{2r}_{i_2\cdotss i_{2r+1}} + e_{i_2} \chi^{2r-1}_{i_3\cdotss i_{2r+1}} \big) , e_{i_1} }_{Y} =_S 0.
	\]
\end{lemma}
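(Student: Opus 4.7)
My plan is to apply the Fredholm solvability criterion of Lemma \ref{lem:solvcond} to the cell problem \eqref{eq:cellProblemsWave_compact_k} at $k=2r$, i.e.\ the one that would determine $\chi^{2r+1}$, and verify that its solvability condition collapses to $\Gten^{2r-1}=_S 0$ after the vanishing of the odd-order $\Pten$'s and of the means of the higher correctors is invoked.

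First, I would split the right-hand side of \eqref{eq:cellProblemsWave_compact_k} at $k=2r$ into its $\naby w$-part and its $w$-part. Lemma \ref{lem:solvcond} requires only that the mean of the $w$-part vanish, and this, inside the outer $S^{2r+1}_{i_1\cdots i_{2r+1}}\{\cdot\}$, reads
\[
\psbig{a(\naby\chi^{2r}_{i_2\cdots i_{2r+1}}+e_{i_2}\chi^{2r-1}_{i_3\cdots i_{2r+1}}),e_{i_1}}_Y \,-\, \sum_{j=0}^{2r-1}\psbig{(\Pten^{2r-1-j}\otimes\chi^j)_{i_1\cdots i_{2r+1}},1}_Y = 0.
\]
A parity argument wipes out the sum: for $j=0$, $\Pten^{2r-1}=0$ by \eqref{eq:definitionCrPk_first} since $2r-1$ is odd; for odd $j\geq 1$, the mean $\intMean{\chi^j}_Y$ vanishes because $\chi^j\in\WperY$; for even $j\geq 2$, $\Pten^{2r-1-j}=0$ for the same odd-order reason. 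Hence the solvability condition reduces to $S^{2r+1}(\Gten^{2r-1})=0$, which by \eqref{eq:equalityS} is $\Gten^{2r-1}=_S 0$.

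The deeper part is to establish this identity for the actual correctors. I would argue by induction on $r$. For the base $r=1$, the strategy is to rewrite $\psbig{a\naby\chi^2_{i_2 i_3},e_{i_1}}_Y$ by testing the first-order cell problem \eqref{eq:cellProblemsWave_compact_1} for $\chi^1_{i_1}$ against $w=\chi^2_{i_2 i_3}\in\WperY$ and exploiting the symmetry of $a$, obtaining $-\psbig{a\naby\chi^1_{i_1},\naby\chi^2_{i_2 i_3}}_Y$; I would then expand this via the second-order cell problem tested against $\chi^1_{i_1}$, with $\intMean{\chi^1}_Y=0$ eliminating the $a^0$-contribution. The full symmetrization $S^3$ over $(i_1,i_2,i_3)$ then pairs the surviving terms (gradient products and algebraic products alike) into canceling conjugates. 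The inductive step follows the same pattern: test the $\chi^{2r}$ cell problem against $\chi^1_{i_1}$, use the symmetry of $a$ together with the zero-mean correctors to neutralize as many contributions as possible, and appeal to the prior identities $\Gten^{2s-1}=_S 0$ for $s<r$ to discard the remaining cross terms.

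I expect the main obstacle to be the combinatorial bookkeeping in the inductive step, since the right-hand side of \eqref{eq:cellProblemsWave_compact_k} at order $2r$ mixes every lower-order corrector with the $\Pten^j$'s. The organizing observation is that after the $\chi^{2r}$ cell problem is tested against $\chi^1_{i_1}$, each $\Pten$-contribution comes multiplied by a corrector $\chi^j$ with $j\geq 1$ whose mean vanishes, so only pure integrals of products of $a$, lower-order correctors, and their gradients survive; these should pair up symmetrically under $S^{2r+1}$ and yield $\Gten^{2r-1}=_S 0$.
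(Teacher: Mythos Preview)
The paper does not actually give its own proof of this lemma; in Remark~\ref{rem:lemma_odd} it refers to \cite[Lemma~5.2.5]{Pou17}. Your opening reduction---that the Fredholm condition for the $\chi^{2r+1}$ problem collapses to $S^{2r+1}(\Gten^{2r-1})=0$ once the odd $\Pten$'s and the means $\intMean{\chi^j}_Y$ are dropped---is correct and matches the paper's discussion preceding the lemma. Your base case $r=1$ also goes through exactly as you describe.

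The inductive step, however, has a genuine gap. After you test the $\chi^{2r}$ cell problem against $\chi^1_{i_1}$, the $\Pten$-sum produces terms of the form $\Pten^{2r-2-j}\,\intMean{\chi^j\chi^1_{i_1}}_Y$; for even $j\geq 2$ the prefactor $\Pten^{2r-2-j}$ is nonzero, and the integral is the mean of the \emph{product} $\chi^j\chi^1$, not of $\chi^j$ alone. Your sentence ``each $\Pten$-contribution comes multiplied by a corrector $\chi^j$ whose mean vanishes'' conflates these two and does not make the term disappear (already for $r=2$ one is left with $a^0\otimes\intMean{\chi^2\chi^1}_Y$). Even setting the $\Pten$-terms aside, testing only against $\chi^1$ leaves cross terms such as $\psbig{a e_{i_2}\chi^{2r-1},\naby\chi^1_{i_1}}_Y$ and $\psbig{a\naby\chi^{2r-1},e_{i_2}\chi^1_{i_1}}_Y$, which are \emph{not} $S^{2r+1}$-conjugates of one another once $2r-1\neq 1$; the nice cancellation you observed at $r=1$ was an accident of both correctors being $\chi^1$.

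What closes the argument is not testing against $\chi^1$ alone, but testing the $\chi^k$ problem against $\chi^{2r+1-k}$ for \emph{every} $k$ and assembling an alternating sum, exactly as the paper does for the even-order tensor $\Gten^{2r}$ in Section~\ref{subsec:prooflemma_even} (tensors $A^k,B^k,C^k,D^k$, the sum $T$ with signs $\sigma^k$, and the telescoping in \eqref{eq::proof1_lemmawpeven_alpha}--\eqref{eq::proof2_lemmawpeven_alpha}). Running that machinery with $2r+2$ replaced by $2r+1$ is the odd-order analog; the $D^k$-type contributions now cancel completely (rather than leaving the $\Hten^r$ residue of the even case), which yields $\Gten^{2r-1}=_S 0$. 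Your ``test against $\chi^1$ and hope for symmetric pairing'' is the first step of that alternating sum, but you need all of them.
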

\begin{remark}	\label{rem:lemma_odd}
	A proof of Lemma \ref{lem:lemmawpodd_alpha} can be found in \cite[Lemma 5.2.5]{Pou17}.
	A similar result is also known in the context of Bloch wave theory (see e.g. \cite{DLS14a}, \cite{BeG17} and the references therein). 
	As discussed, Lemma \ref{lem:lemmawpodd_alpha} guarantees that the odd order cell problems are well-posed unconditionally.
	Pursuing the reasoning, this result ensures that no operator of odd order is needed in the effective equations.
	It is also the reason why no additional correction is required in the effective equation 
	when increasing the timescale from an even integer $\alpha$ to $\alpha+1$.
\end{remark}

Finally, consider the cell problem for $\chi^{2r+2}$.
Imposing the solvability condition \eqref{eq:solvabilityConditionCalWper_ch5} on the right-hand side, we obtain the following constraint on the tensor $\Pten^{2r}$:
\begin{equation}	\label{eq:constaintcr_alpha}
\Pten^{2r} =_S \Gten^{2r},
\qquad \Gten^{2r}_{i_1\cdotss i_{2r+2}}
\coloneqq \frac{1}{|Y|}\psBig{ a \big(\naby\chi^{2r+1}_{i_2\cdotss i_{2r+2}} + e_{i_2} \chi^{2r}_{i_2\cdotss i_{2r+2}} \big) , e_{i_1}}_{Y}.
\end{equation}
This constraint can be rewritten in terms of the effective tensors $a^{2r},b^{2r}$ 
using the definition of $\Cten^r$ and $\Pten^{2r}$ in \eqref{eq:definitionCrPk_first}:
\begin{equation}	\label{eq:constainta2rb2r_alpha}
a^{2} - b^{2}\otimes a^0  =_S - \Gten^{2},
\qquad
a^{2r} - b^{2r}\otimes a^0  =_S (-1)^r \Gten^{2r} + \sum_{\ell=1}^{r-1} \Cten^\ell \otimes b^{2(r-\ell)}  
\qquad
2\leq r \leq \lfloor \alpha/2\rfloor.
\end{equation}

The constraints \eqref{eq:constainta2rb2r_alpha} characterize the family of effective equations.
%(see Definition \ref{def:familyofeffectivesolution_alpha}). 
Indeed, if the effective tensors $a^{2r},b^{2r}$ satisfy \eqref{eq:constainta2rb2r_alpha}, the cell problems for $\chi^1$ to $\chi^{\alpha+2}$ are well-posed because their right-hand sides satisfy the solvability condition \eqref{eq:solvabilityConditionCalWper_ch5}.
Therefore, the adaptation $\Beps\tildeub$ given by \eqref{eq:adaptation1_alpha} and \eqref{eq:adaptation2_alpha} is well defined and can be used to prove an error estimate between $\ueps -\tildeub$.
This result is presented in the next section and rigorously proved in Section \ref{subsec:summaryproof}.

%\begin{remark}	\label{rem:lemma_even}
%The second main result (Theorem \ref{thm:lemmawpeven_alpha}) is a new relation between the correctors
%ensuring that up to symmetries the tensor $\Gten^{2r}$ can be computed with the correctors $\{\chi^k\}_{k=1}^{r+1}$.
%In particular, computing $\Gten^{2r}$ $\{\chi^k\}_{k=1}^{2r+1}$ in f.
%In consequence, the cost of computation of the effective tensors is significantly reduced.
%\end{remark}

% % % % % % % % % % % % % % % % % % % % % % % % % % % % % % % % % % % % % % % % % % % % %
% % % % % % % % % % % % % % % % % % % % % % % % % % % % % % % % % % % % % % % % % % % % %

\subsection{A priori error estimate for the family of effective equations}	\label{subsec:mainresult_subsec}

In the previous section, we derived the constraints on the effective tensors for the effective equations to approximate $\ueps$. 
We present here an error estimate that ensures that the solutions of the derived effective equations are $\eps$ close to $\ueps$ in the $\Li(0,\eps^{-\alpha}T;\Wnorm)$ norm.

Let us first rigorously define the family of effective equations derived in Section \ref{subsec:derivationFamily_subsec}.
\begin{definition}	\label{def:familyofeffectivesolution_alpha}
	The family $\famEfEq$ of effective equations is the set of equations \eqref{eq:effectiveEquation_alpha}, where
	$Qf$ is defined as
	\begin{equation}	\label{eq:defQf_expl}
	Qf 	= f + \sum_{r=1}^{\lfloor \alpha/2 \rfloor}(-1)^r\eps^{2r} b^{2r} \nabxn{2r} f,
	\end{equation}
	and for $1\leq r \leq \lfloor\alpha/2\rfloor$ the tensors $a^{2r},b^{2r}$ satisfy the following requirements:
	\begin{enumerate}[label=\textit{(\roman{*})}]
		\item	\label{en:defr1}	$a^{2r}\in\Ten^{2r+2}(\R^d)$, $b^{2r}\in\Ten^{2r}(\R^d)$;
		\item	\label{en:defr2}	$a^{2r}$ and $b^{2r}$ are positive semidefinite  (see \eqref{eq:tensorPositiveSemidef_alpha});
		\item	\label{en:defr3}	$a^{2r}$ and $b^{2r}$ satisfy the major symmetries \eqref{eq:majorSymDef_alpha};
		\item	\label{en:defr4}	$a^{2r}$ and $b^{2r}$ satisfy the constraints \eqref{eq:constainta2rb2r_alpha}.
	\end{enumerate}
\end{definition}
\begin{remark}	\label{rem:explanationQf1}
	In the case $\alpha=2$, the correction $Qf$ of the right-hand side in the effective equations was introduced in \cite{ABV15} and discussed in \cite{AbP16}.
	We verify that the definition of $Qf$ in \eqref{eq:defQf_expl} leads to a lower constant multiplying $\norm{f}_{\Lu(0,\eps^{-\alpha}T;\HkOm{r(\alpha)})}$ in estimate \eqref{eq:errorEstimateThm}.
	More details on the origin of \eqref{eq:defQf_expl} are given in Remarks \ref{rem:explanationQf2}
	and \ref{rem:explanationQf4}.
\end{remark}
For the effective solutions in family $\famEfEq$, we prove in Section \ref{subsec:summaryproof} the following error estimate.
\begin{theorem}		\label{thm:est_epsm_alpha}
	Let $d\leq 3$, $\alpha\geq2$ and let $\tildeub$ belong to the family of effective equations $\famEfEq$ (Definition \ref{def:familyofeffectivesolution_alpha}).
	Furthermore, assume that $a(y)\in\Li(Y)$ 
	and that the data and $\tildeub$ satisfy the following regularity
	\begin{gather*}
	\tildeub\in\Li(0,\eps^{-\alpha}T;\HkOm{r(\alpha)+2}),
	~~
	\dt\tildeub\in\Li(0,\eps^{-\alpha}T;\HkOm{r(\alpha)+1}),
	~~
	\dtt\tildeub\in\Li(0,\eps^{-\alpha}T;\HkOm{r(\alpha)}),
	\\
	\cini\in\HkOm{\alpha+2},
	~~
	\dini\in\HkOm{\alpha+2},
	~~
	f\in\Ld(0,\eps^{-\alpha}T;\HkOm{r(\alpha)}),
	\end{gather*}
	where $r(\alpha)=\alpha+2\lfloor\alpha/2\rfloor+2$.
	Then the following error estimate holds
	\begin{equation}	\label{eq:errorEstimateThm}
	\begin{aligned}
	\norm{\ueps-\tildeub}_{\Li(0,\eps^{-\alpha}T;\Wnorm)}
	\leq C \eps\Big( &\,\norm{\dini}_{\HkOm{\alpha+2}} + \norm{\cini}_{\HkOm{\alpha+2}} + \norm{f}_{\Lu(0,\eps^{-\alpha}T;\HkOm{r(\alpha)})}
	\\&+ \textstyle\sum_{k=1}^{r(\alpha)+2}\seminorm{\tildeub}_{\Li(0,\eps^{-\alpha}T;\HkOm{k})} 
	+ \norm{\dtt \tildeub}_{\Li(0,\eps^{-\alpha}T;\HkOm{r(\alpha)})}\,\Big),	
	\end{aligned}
	\end{equation}
	where $C$ depends only on $T$, $\lambda$, $\Lambda$, 
	$\{|a^{2r}|_\infty,|b^{2r}|_\infty\}_{r=1}^{\lfloor \alpha/2\rfloor}$, and $Y$, 
	and the norm $\norm{{\cdot}}_\Wnorm$ is defined in \eqref{eq:definitionNormW}.
\end{theorem}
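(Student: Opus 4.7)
The plan is to follow the constructive strategy outlined in Section \ref{subsec:derivationFamily_subsec}: use the two-scale adaptation $\Beps\tildeub$ defined by \eqref{eq:adaptation1_alpha}--\eqref{eq:adaptation2_alpha} as a bridge between $\tildeub$ and $\ueps$, and split
\[
\norm{\ueps-\tildeub}_{\Li(0,\Teps;\Wnorm)}
\le \norm{\ueps-\Beps\tildeub}_{\Li(0,\Teps;\Wnorm)}
+ \norm{\Beps\tildeub-\tildeub}_{\Li(0,\Teps;\Wnorm)}.
\]
The second term is the easy one: by construction it equals $\sum_{k=1}^{\alpha+2}\eps^k \chi^k(\cdot/\eps)\nabxn{k}\tildeub$, so $\calO(\eps)$ follows from bounds on the correctors on the reference cell (obtained from $a\in\Li(Y)$ and elliptic regularity for \eqref{eq:cellProblemsWave_compact}) together with the regularity of $\tildeub$, producing precisely the sum $\sum_k\seminorm{\tildeub}_{\Li(0,\Teps;\HkOm k)}$ appearing on the right-hand side of \eqref{eq:errorEstimateThm}; the restriction $d\leq 3$ enters here via Sobolev embedding used to control the products $\chi^k(\cdot/\eps)\nabxn k\tildeub$ uniformly in $\eps$ in $\Ld$.

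The genuine work is in the first term, which I would attack through the stability estimate referenced as Lemma \ref{lem:errorestimateindepoftimeanddomain}: for the wave operator $\dtt+\Aeps$ the difference of two solutions on $(0,\Teps)$ is controlled, independently of $\Teps$ and of the domain, by the $\Lu(\Ld)$ norm of the residual $r^\eps$ from \eqref{eq:defSmallreps} plus the mismatch of initial data. The inductive Boussinesq tricks (Theorem \ref{thm:rewritedtttildeub_2_alpha} and Lemma \ref{lem:inductiveBoussinesqTricks_alpha}) produce the expansion \eqref{eq:waveAsymptoticDeveloppment_2_alpha}; because the correctors solve the cell problems \eqref{eq:cellProblemsWave_compact} and the tensors $\{a^{2r},b^{2r}\}$ satisfy the constraints \eqref{eq:constainta2rb2r_alpha} (by the assumption $\tildeub\in\famEfEq$), all terms of orders $\eps^{-1}$ through $\eps^{\alpha}$ in \eqref{eq:waveAsymptoticDeveloppment_2_alpha} cancel, leaving only $\mathcal{S}^\eps f + \mathcal{R}^\eps_{\mathrm{ini}}\tildeub + \mathcal{R}^\eps\tildeub$. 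From \eqref{eq:Repsini}, $\mathcal{R}^\eps_{\mathrm{ini}}$ is manifestly $\calO(\eps^{\alpha+1})$ in $\Ld$; the remainder $\mathcal{R}^\eps\tildeub$ from the Boussinesq procedure is also $\calO(\eps^{\alpha+1})$ in $\Ld$ given the $\HkOm{r(\alpha)+2}$ regularity of $\tildeub$ and the $\HkOm{r(\alpha)}$ regularity of $\dtt\tildeub$; finally $\mathcal{S}^\eps f$ is $\calO(\eps)$ in $\Ld$, the precise choice \eqref{eq:defQf_expl} of $Qf$ being tailored (cf.\ Remark \ref{rem:explanationQf1}) so that the constant multiplying the forcing contribution is as small as possible.

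Integrating on $(0,\Teps)$ multiplies a pointwise $\calO(\eps^{\alpha+1})$ residual by the time length $\eps^{-\alpha}T$, producing an $\Lu(\Ld)$ bound of order $\eps$, which is exactly the scaling required; similarly the contribution of $\mathcal{S}^\eps f$ is $\calO(\eps)$. The mismatch of initial data is explicit from \eqref{eq:adaptation1_alpha}: $\Beps\tildeub(0)-\cini=\sum_{k=1}^{\alpha+2}\eps^k\chi^k(\cdot/\eps)\nabxn k\cini$ with the analogous formula for $\dt(\Beps\tildeub)(0)-\dini$, and both are $\calO(\eps)$ in the $\Wnorm$ norm as soon as $\cini,\dini\in\HkOm{\alpha+2}$. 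Feeding these three ingredients into Lemma \ref{lem:errorestimateindepoftimeanddomain} and combining with the bound on $\norm{\Beps\tildeub-\tildeub}_{\Li(0,\Teps;\Wnorm)}$ yields \eqref{eq:errorEstimateThm} with constant depending only on the data listed in the statement.

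The main obstacle, to my mind, is less any single estimate than the bookkeeping required to propagate the inductive Boussinesq trick through all orders up to $\alpha$ and to identify the precise residual structure in \eqref{eq:waveAsymptoticDeveloppment_2_alpha}. In particular, verifying rigorously that the terms left in $\mathcal{R}^\eps\tildeub$ carry the weight $\eps^{\alpha+1}$ rather than any lower power requires careful tracking of the recursive structure of the tensors $\Cten^r$ and $\Pten^k$ from \eqref{eq:definitionCrPk_first}, and the symmetrization convention of Remark \ref{rem:correctorsAreSymmetric} must be used consistently so that the constraint \eqref{eq:constainta2rb2r_alpha}, stated only up to symmetries, truly annihilates the right-hand side of the cell problem for $\chi^{2r+2}$ inside the residual.
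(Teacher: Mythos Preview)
Your plan is correct and matches the paper's strategy: split via the adaptation $\Beps\tildeub$, use the wave-stability Lemma \ref{lem:errorestimateindepoftimeanddomain} for the first piece with the residual reduced by \eqref{eq:waveAsymptoticDeveloppment_2_alpha} and the cell problems, and bound the second piece directly from the corrector estimates and Sobolev embedding (the paper packages the latter as Lemma \ref{lem:estimate_dct}). Two organizational differences are worth noting: the paper works throughout in the quotient spaces $\calLdOm$, $\calWperOm$ (so that the adaptation, which need not have zero mean, fits the functional framework of Lemma \ref{lem:errorestimateindepoftimeanddomain}); and rather than feeding $\mathcal{S}^\eps f$ into the residual together with $\mathcal{R}^\eps_{\mathrm{ini}}+\mathcal{R}^\eps$, the paper absorbs it into an auxiliary solution $\eqc\varphi$ of $(\dtt+\Aeps)\eqc\varphi=-\eqcbis{\mathcal{S}^\eps f}$ and defines $\BBeps\tildeub=\eqcbis{\Beps\tildeub}+\eqc\varphi$, which lets it apply the $\Lu$-version of Lemma \ref{lem:errorestimateindepoftimeanddomain} to $\eqc\varphi$ and the $\Li$-version to the remaining residual---your single-shot use of the $\Lu$-version on the full residual achieves the same bound.
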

%where $r(\alpha) = \max\{m(\alpha),\alpha+4\}$.$m(\alpha) = \alpha+2\lfloor\alpha/2\rfloor+2$.
%where $r(\alpha) = \max\{\alpha+2\lfloor\alpha/2\rfloor+2,\alpha+4\}$.$m(\alpha) = $.

%\begin{remark}	\label{rem:weakerregtensor}
%	The result of Theorem \ref{thm:est_epsm_alpha} still holds under the weaker regularity $a\in\fcnL^\infty_\per(Y)$ provided we assume a stronger regularity of the initial data, the source and $\tildeub$.
%	We refer to \cite[section 4.2.6]{Pou17} for more details.
%\end{remark}
\begin{remark}
	For $d\geq 4$, the result of Theorem \ref{thm:est_epsm_alpha} holds provided a higher regularity on the effective solution and $f$ are assumed.
	Specifically, assuming that $m$ is a sufficiently large integer for the embedding 
	$\Hk{m}_\per(\Omega)\contEmb\fcnC^0(\Omega)$ to hold, the statement of Theorem \ref{thm:est_epsm_alpha} is true for $r(\alpha)=\alpha+2\lfloor\alpha/2\rfloor+m$. 
\end{remark}

\begin{remark}
	From the derivation in Section \ref{subsec:derivationFamily_subsec}, we may hope that $\Beps\tildeub$ is a better approximation of $\ueps$ than $\tildeub$.
	For example, in the elliptic case the error between $\ueps$ and an adaptation can be estimated in the energy norm 
	(see e.g. \cite{JKO94}, note that under \eqref{eq:assumptionOmega} there is no boundary layer).
	However, in the case of the wave equation, the answer is not as simple.
	In fact, an error estimate in the energy norm can be proved only in some specific settings: for example if $f=0$ and {for well prepared initial} 
	position $\cini$ \ma{of the form} $\Beps\bar u_0$ for some $\bar u_0$.
	Note that this issue is related to the lack of convergence of the energy of the fine scale wave toward the energy of the homogenized wave (see \cite{BFM92}).
\end{remark}

% % % % % % % % % % % % % % % % % % % % % % % % % % % % % % % % % % % % % % % % % % % % %
% % % % % % % % % % % % % % % % % % % % % % % % % % % % % % % % % % % % % % % % % % % % %

\subsection{Comparison with other effective equations in the literature}	\label{subsec:comparisonGloria_alpha}

In \cite{BeG17}, effective equations for arbitrary timescales are derived.
The settings are slightly different as the wave equation \eqref{eq:waveEquationEpsilon_lpd} is considered in the whole space $\R^d$ and with a tensor that can be of a more general nature: periodic, almost periodic, quasiperiodic and random (we refer to \cite{BeG17} for the specific definitions).
%In addition, the source and initial speed are $f=\dini=0$.
In these circumstances, \cite{BeG17} proposes effective equations that have the form
\begin{equation} \label{eq:veps_Gloria}
\dtt \bar{u}	- a^0 \nabxn{2}\bar{u}
+\displaystyle\sum_{r=0}^{\lfloor\alpha/2\rfloor} \eps^{2r} \Dten^{2r} \nabxn {2r+2}\bar{u}
- \epsilon^{2\lfloor{\alpha}/{2}\rfloor+2}R\bar{u}
= 0,
\qquad
R \bar{u}
=
\gamma(-1)^{\lfloor{\alpha}/{2}\rfloor+1} \mathrm{Id} \nabxn {2\lfloor{\alpha}/{2}\rfloor+4} \bar{u}.
%\quad\text{in }(0,\eps^{-\alpha}T]\times\R^d.
\end{equation} 
The effective tensors in this equation are indeed verified to match the tensors $\Gten^{2r}$ defined in \eqref{eq:constaintcr_alpha} (see \cite[section 5.2.6]{Pou17}).
%and $R\bar{u}$ is a regularization term defined as
%\begin{equation*} %\label{eq:veps_Gloria_stab}
%R \bar{u}
%=
%\gamma(-1)^{\lfloor{\alpha}/{2}\rfloor+1} \mathrm{Id} \nabxn {2\lfloor{\alpha}/{2}\rfloor+4} \bar{u}.
%\end{equation*} 
Under some weak regularity assumption on the initial condition, an error estimate between $\ueps$ and $\bar{u}$ is proved in the $\Li(0,\eps^{-\alpha}T;\Ld(\R^d))$ norm.
This estimate is a strong theoretical result as it holds in the norm we expect in the context of homogenization of the wave equation.
However, the use of the effective equation \eqref{eq:veps_Gloria} in practice is problematic
as no procedure for the computation of the regularization parameter $\gamma$ is available.
In fact, numerical tests indicate that the range of acceptable values for $\gamma$ is narrow: if $\gamma$ is too small, the equation is ill-posed and if $\gamma$ is too large, the solution $\bar{u}$ of \eqref{eq:veps_Gloria} does not describe $\ueps$ accurately (see \cite[section 5.4.3]{Pou17}).

In \cite{ALR18}, another effective equation for arbitrary timescales is proposed.
The settings of this result are the following: the wave equation \eqref{eq:waveEquationEpsilon_lpd} is considered in the whole space $\R^d$ with a periodic tensor, it includes an oscillating density and admits nonzero source and trivial initial conditions.
In the particular case of a constant density (as considered in the present paper), the effective equation proposed in \cite{ALR18} reads
\begin{equation} \label{eq:veps_AllaireLamacz}
%\dtt\hat{u}	- a^0 \nabxn{2}\hat{u}
%+\displaystyle\sum_{r=0}^{\lfloor\alpha/2\rfloor} \eps^{2r}\Dten^{2r} \nabxn {2r+2}\hat{u}
\bigg(\dtt	
- a^0 \nabxn{2}
+\displaystyle\sum_{r=0}^{\lfloor\alpha/2\rfloor} \eps^{2r} \Dten^{2r} \nabxn {2r+2}\bigg)
S_2^\eps\hat{u}
= S_1^\eps f
\quad\text{in }(0,\eps^{-\alpha}T]\times\R^d,
\end{equation} 
where $S^\eps_1$, $S^\eps_2$ are filtering differential operators that ensure the equation to be well-posed.
The main result is an error estimate in energy norm between $\ueps$ and an adaptation of $\hat{u}$.
In particular, the results enable the adaptation to approximate $\ueps$ as accurately as one wants by increasing $\alpha$ accordingly.
%Furthermore, this result implies an estimate between $\ueps$ and $\hat{u}$ in the $\Li(0,\eps^{-\alpha}T;\Ld(\R^d))$ norm.
Nevertheless, the filtering process used in \eqref{eq:veps_AllaireLamacz} to obtain a well-posed equation has yet to be tested in practice.

Despite the inherent difference between \eqref{eq:veps_Gloria} and \eqref{eq:veps_AllaireLamacz} and effective equations \eqref{eq:effectiveEquation_alpha} in family $\famEfEq$, their respective effective tensors are tied through the relation (see \eqref{eq:constainta2rb2r_alpha}):
\begin{equation}	\label{eq:relationbaraj_alpha}
\Dten^{2r}
%	=_S \Gten^{2r}
%	=_S (-1)^r \Cten^{r}
=_S (-1)^r\Big( a^{2r} - \sum_{\ell=0}^{r-1} \Cten^\ell \otimes b^{2(r-\ell)} \Big)
\qquad 1\leq r \leq \lfloor\alpha/2\rfloor.
\end{equation}
Hence, if we let $b^{2r}=0$ for all $r$ in the effective equations in $\famEfEq$, relation \eqref{eq:relationbaraj_alpha} reads $\Dten^{2r} = (-1)^ra^{2r}$ and we end up with the unregularized versions of \eqref{eq:veps_Gloria} and \eqref{eq:veps_AllaireLamacz} (i.e., $\gamma=0$ and $S_1^\eps=S_2^\eps=\mathrm{Id}$).
Similarly, equations \eqref{eq:veps_Gloria} and \eqref{eq:veps_AllaireLamacz} without their
regularization devices satisfy requirement \ref{en:defr4} in Definition \ref{def:familyofeffectivesolution_alpha}.
However, while the well-posednesses of \eqref{eq:veps_Gloria} and \eqref{eq:veps_AllaireLamacz} rely on their respective regularization process,
the well-posedness of the effective equations in $\famEfEq$ is guaranteed by requirements \ref{en:defr2} and \ref{en:defr3}.
To fulfill these requirements, we have an explicit and constructive algorithm described in Section \ref{subsec:constructionEffectiveTensors}.

Finally, before approximating any of the effective equations \eqref{eq:effectiveEquation_alpha}, \eqref{eq:veps_Gloria} or \eqref{eq:veps_AllaireLamacz}, the effective tensors $\Gten^{2r}$ must be computed.
For this calculation, a substantial gain of computational time is obtained by using the formula provided by our second main result in Theorem \ref{thm:lemmawpeven_alpha}, Section \ref{subsec:algorithm_alpha}.

% % % % % % % % % % % % % % % % % % % % % % % % % % % % % % % % % % % % % % % % % % % % %
% SECTION % % % % % % % % % % % % % % % % % % % % % % % % % % % % % % % % % % % % % % % %
% % % % % % % % % % % % % % % % % % % % % % % % % % % % % % % % % % % % % % % % % % % % %

\section{Second main result: computation of effective tensors and reduction of the computational cost}	
\label{sec:secondMainResult}

In this section, we provide a numerical procedure for the computation of the tensors of some effective equations in the family $\famEfEq$.
In particular, in \ref{subsec:algorithm_alpha} we present a new relation between the correctors that allows to reduce significantly the computational cost for the effective tensors.
We emphasize that this result is not limited to the family $\famEfEq$ and can be used to compute the effective tensors of the alternative effective models from \cite{ALR18} and \cite{BeG17} discussed in \ref{subsec:comparisonGloria_alpha}.
The final algorithm is provided in \ref{subsec:algo}.

% % % % % % % % % % % % % % % % % % % % % % % % % % % % % % % % % % % % % % % % % % % % %
% SECTION % % % % % % % % % % % % % % % % % % % % % % % % % % % % % % % % % % % % % % % %
% % % % % % % % % % % % % % % % % % % % % % % % % % % % % % % % % % % % % % % % % % % % %

\subsection{Construction of high order effective tensors}	\label{subsec:constructionEffectiveTensors}

Recall that the tensors $\{a^{2r},b^{2r}\}_{r=1}^{\lfloor\alpha/2\rfloor}$ of an effective equation in family $\famEfEq$ are characterized by the following properties (Definition \ref{def:familyofeffectivesolution_alpha}):
for $1\leq r\leq \lfloor\alpha/2\rfloor$ 
\begin{enumerate}[label=\textit{(\roman{*})}]
	\item	$a^{2r}\in\Ten^{2r+2}(\R^d)$, $b^{2r}\in\Ten^{2r}(\R^d)$;
	\item	$a^{2r}$ and $b^{2r}$ are positive semidefinite, i.e., (see \eqref{eq:tensorPositiveSemidef_alpha})
	\[
	a^{2r}\xi:\xi
	%=
	%a^{2r}_{i_1\cdots i_{2r+2}} \xi_{i_1\cdots i_{r+1}} \xi_{i_{r+2}\cdots i_{2r+2}}
	\geq 0\quad
	\forall \xi\in\Sym^{r+1}(\R^d),
	\qquad
	b^{2r}\xi:\xi
	%=
	%b^{2r}_{i_1\cdots i_{2r}} \xi_{i_1\cdots i_{r}} \xi_{i_{r+1}\cdots i_{2r}} 
	\geq 0\quad
	\forall \xi\in\Sym^{r}(\R^d);
	\]
	\item 	$a^{2r}$ and $b^{2r}$ satisfy the major symmetries, i.e.,
	\[
	a^{2r}_{i_1\cdots i_{r+1}i_{r+2}\cdots i_{2r+2}} = a^{2r}_{i_{r+2}\cdots i_{2r+2}i_1\cdots i_{r+1}},
	\qquad
	b^{2r}_{i_1\cdots i_{r}i_{r+1}\cdots i_{2r}} = b^{2r}_{i_{r+1}\cdots i_{2r}i_1\cdots i_{r}};
	\]
	\item	 $a^{2r}$ and $b^{2r}$ satisfy the constraints 
	\begin{equation}	\label{eq:constrbis}
	a^{2r}-b^{2r}\otimes a^0 =_S  \check{q}^r,
	\end{equation}
	with the tensors $\check{q}^r\in\Sym^{2r+2}(\R^d)$ defined as
	\begin{equation}	\label{eq:defCheckqr}
	\check{q}^1 = S^4\big( -\Gten^2 \big) ,
	\qquad
	\check{q}^r = 
	S^{2r+2}\Big(
	(-1)^r \Gten^{2r} + \sum_{\ell=1}^{r-1} \Cten^\ell \otimes b^{2(r-\ell)} 
	\Big)
	\quad
	2\leq r\leq \lfloor\alpha/2\rfloor,
	\end{equation}
	where the tensor $\Cten^1\in\Ten^{4}(\R^d),\hdots, \Cten^{r-1}\in\Ten^{2r}(\R^d)$, defined in \eqref{eq:definitionCrPk_first},
	are computed with $a^0$, $\{a^{2s},b^{2s}\}_{s=1}^{r-1}$ 
	and the tensor $\Gten^{2r}\in\Ten^{2r+2}(\R^d)$, defined in \eqref{eq:constaintcr_alpha}, can be computed from the correctors $\chi^{2r}$ and $\chi^{2r+1}$ 
	(a much cheaper alternative is to compute only $S^{2r+2}(\Gten^{2r})$ with formula \eqref{eq:decompostionhr}, see Section \ref{subsec:algorithm_alpha}).
\end{enumerate}
Our goal is now to construct tensors $\{a^{2r},b^{2r}\}_{r=1}^{\lfloor\alpha/2\rfloor}$ satisfying the requirements \ref{en:defr1} to \ref{en:defr4}.
Note that if $\check{q}^r$ was positive semidefinite, the pair $a^{2r}=\check{q}^r$ and $b^{2r}=0$ would trivially satisfy \ref{en:defr1} to \ref{en:defr4}.
However, although the sign of $\check{q}^r$ is unknown for $r\geq2$, it is known that $\check{q}^1$ is negative definite (see \cite{COV06,ABV15,AbP16}). 
Hence, the main challenge of the construction lies in the sign of the tensors
and to build valid tensors, we need two basics from the tensor world.
First, we use the following result, proved in Section \ref{subsec:signOfEvenOrderTensor}.
\begin{lemma} \label{lem:tensorProdOfA0}
	For any $n\geq1$, the tensor $S^{2n}(\otimes^{n} a^0)$ is positive definite.%, i.e., 
	%\begin{equation}	\label{eq:posDefTensor}
	%$S^{2n}(\otimes^{2n} a^0)\xi:\xi >0$ $\forall \xi\in\Sym^{n}(\R^d)$.
	%\end{equation}
\end{lemma}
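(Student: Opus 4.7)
The plan is to exploit the positive definiteness of $a^0$ (inherited from the uniform ellipticity of $a$) by introducing, for each $m\geq1$, the $a^0$-induced inner product on $\Sym^m(\R^d)$:
\begin{equation*}
\langle\xi,\eta\rangle_{a^0}\;:=\;a^0_{i_1 j_1}\,a^0_{i_2 j_2}\cdots a^0_{i_m j_m}\,\xi_{i_1\cdots i_m}\,\eta_{j_1\cdots j_m}.
\end{equation*}
Writing $M=(a^0)^{1/2}$ for the SPD square root, one checks $\langle\xi,\xi\rangle_{a^0}=\|\tilde\xi\|^2$ with $\tilde\xi_{i_1\cdots i_m}=M_{i_1 j_1}\cdots M_{i_m j_m}\xi_{j_1\cdots j_m}$, so $\langle\cdot,\cdot\rangle_{a^0}$ is a genuine inner product; in particular $\langle\xi,\xi\rangle_{a^0}>0$ whenever $\xi\neq0$.

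Next, I would expand the symmetrisation against $\xi\otimes\xi$ explicitly. Using the $S_n\times S_n$ invariance of $\xi\otimes\xi$ (which holds because $\xi\in\Sym^n$), the $(2n)!$ permutations in $S^{2n}$ organise by the subset $B\subset\{1,\dots,2n\}$ of size $n$ that records which slots of $\otimes^n a^0$ are paired with the first copy of $\xi$:
\begin{equation*}
S^{2n}(\otimes^n a^0):(\xi\otimes\xi)\;=\;\frac{(n!)^2}{(2n)!}\sum_{|B|=n}T_B(\xi),\qquad T_B(\xi)\;:=\;a^0_{i_1 i_2}\,a^0_{i_3 i_4}\cdots a^0_{i_{2n-1} i_{2n}}\,\xi_B\,\xi_{B^c},
\end{equation*}
where $\xi_B$ abbreviates $\xi$ evaluated on the slots $\{i_b\}_{b\in B}$ (the order being irrelevant thanks to the symmetry of $\xi$). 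Now classify the $n$ adjacent pairs $\{2k\!-\!1,2k\}$ with respect to $B$: let $s$ of them lie entirely in $B$ and (forced by $|B|=|B^c|=n$) $s$ lie entirely in $B^c$, leaving $n-2s$ split pairs. The within-group $a^0$-factors act as partial traces of $\xi_B$ and of $\xi_{B^c}$; by the full symmetry of $\xi$, both reductions yield the \emph{same} tensor $\xi'\in\Sym^{n-2s}(\R^d)$ (namely $\xi$ with $s$ pairs of indices traced against $a^0$), and the split-pair factors then contract $\xi'$ with itself precisely in the pattern of $\langle\cdot,\cdot\rangle_{a^0}$ on $\Sym^{n-2s}(\R^d)$, giving $T_B(\xi)=\langle\xi',\xi'\rangle_{a^0}\geq0$.

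To conclude strict positivity, I would single out the $2^n$ subsets $B$ obtained by picking exactly one element from each adjacent pair: these have $s=0$, hence $\xi'=\xi$ and $T_B(\xi)=\langle\xi,\xi\rangle_{a^0}>0$ whenever $\xi\neq0$; combined with the nonnegativity of the remaining terms $T_B$, this yields the required strict lower bound on $S^{2n}(\otimes^n a^0):(\xi\otimes\xi)$. The main obstacle I expect is the combinatorial book-keeping in the previous step: one must match every contraction in $T_B(\xi)$ either to an internal trace or to a cross-contraction of the partial traces, and verify that the cross-contractions assemble into $\langle\xi',\xi'\rangle_{a^0}$ rather than some other $a^0$-bilinear form on $\Sym^{n-2s}(\R^d)$. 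The symmetry of $\xi$ is what ultimately rescues this identification; everything else is then routine.
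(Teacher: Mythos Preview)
Your argument is correct and genuinely different from the paper's. The paper proceeds by induction on $n$: it first proves an auxiliary lemma stating that if $R\in\Ten^{2n}(\R^d)$ is positive definite and $A$ is SPD then the tensor $A_{i_1 i_{2n+2}}R_{i_2\cdots i_{2n+1}}$ is positive definite (via the Cholesky factor of $A$), and then argues that for odd $n$ every term in $S^{2n}(\otimes^n A)\xi:\xi$ contains at least one ``bridging'' factor $A_{jk}$ linking the two copies of $\xi$, so the auxiliary lemma together with the induction hypothesis gives strict positivity; for even $n$ the remaining non-bridging terms are perfect squares.

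Your route bypasses the induction entirely. By re-indexing the symmetrisation sum through the set $B=\sigma^{-1}(\{1,\dots,n\})$ you obtain the closed-form expansion $\frac{(n!)^2}{(2n)!}\sum_{|B|=n}T_B(\xi)$, and your classification of the pairs $\{2k-1,2k\}$ as ``inside $B$'', ``inside $B^c$'' or ``split'' shows directly that each $T_B(\xi)=\langle\xi^{(s)},\xi^{(s)}\rangle_{a^0}\ge0$, where $\xi^{(s)}\in\Sym^{n-2s}(\R^d)$ is the $s$-fold $a^0$-trace of $\xi$ (with the convention that the $a^0$-inner product on $\Sym^0(\R^d)=\R$ is the ordinary product). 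The key point---which resolves the ``obstacle'' you flag---is that full symmetry of $\xi$ makes the $s$-fold trace independent of \emph{which} $s$ pairs are traced, and makes the split-pair contractions independent of the particular bijection between the remaining indices; this is exactly why every $T_B$ collapses to the same canonical form $\langle\xi^{(s)},\xi^{(s)}\rangle_{a^0}$. The $2^n$ subsets with $s=0$ each contribute $\langle\xi,\xi\rangle_{a^0}>0$, and you are done.

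What each approach buys: the paper's inductive argument is short and produces the reusable auxiliary lemma about $A_{i_1 i_{2n+2}}R_{i_2\cdots i_{2n+1}}$, which is of independent interest. Your approach yields more, namely an explicit nonnegative decomposition
\[
S^{2n}(\otimes^n a^0)\,\xi:\xi \;=\; \frac{(n!)^2}{(2n)!}\sum_{s=0}^{\lfloor n/2\rfloor} N_s\,\big\langle \xi^{(s)},\xi^{(s)}\big\rangle_{a^0},
\]
where $N_s$ is the number of size-$n$ subsets $B$ with exactly $s$ pairs inside $B$; this even gives a quantitative lower bound $S^{2n}(\otimes^n a^0)\,\xi:\xi \ge \frac{(n!)^2\,2^n}{(2n)!}\langle\xi,\xi\rangle_{a^0}\ge \frac{(n!)^2\,2^n}{(2n)!}\lambda^{n}|\xi|^2$ in terms of the ellipticity constant of $a^0$.
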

Second, we use a ``matricization'' operator, which linearly maps a symmetric tensor $q\in\Sym^{\inds}(\R^d)$
to a symmetric matrix $M(q)$ whose sign is the same as $q$, i.e.,
\begin{equation}	\label{eq:propertyM}
\genTen\xi :\xi = M(\genTen) \nu(\xi) \cdot \nu(\xi)
\quad \forall\xi,\eta\in\Sym^{\inds}(\R^d),
\end{equation}
where $\nu$ is a simple tensor-to-vector transformation. 
One construction for such an operator is provided in Section \ref{subsec:matricizationOperator}.

With Lemma \ref{lem:tensorProdOfA0} and the operator $M$, we are able to build tensors that satisfy \ref{en:defr1} to \ref{en:defr4}.
\begin{lemma}	\label{lem:constructionOfR}
	For $r\geq 1$, assume that $a^0$ and $\{a^{2s},b^{2s}\}_{s=1}^{r-1}$ have already been computed
	and let $\check{q}^r$ be the tensor defined in \eqref{eq:defCheckqr}.
	Define the tensor $R\in\Ten^{2r}(\R^d)$ as
	\begin{equation}	\label{eq:verifPosSemDef_alpha}
	\begin{aligned}
	&%Q^{r} = M( \check{q}^r ),\quad 
	A^{r} = M\big(  S^{2 r+2}(\otimes^{r+1} a^0) \big),\\
	&\delta^* = \bigg\{ -\frac{\lambda_{\min}\big( M(\check{q}^r) \big)}{\lambda_{\min}(A^{r})}\bigg\}_+,\\
	&R = \delta^* S^{2r}(\otimes^{r} a^0), 
	\end{aligned}
	\end{equation}	
	where $\lambda_{\min}(\cdot)$ denotes the minimal eigenvalue of a symmetric matrix and
	$\{\cdot\}_+ = \max\{0,\cdot\}$.
	Then the tensors 
	\begin{equation}	\label{eq:pairofvalidtensors_alpha}
	a^{2r} = \check{q}^r + S^{2r+2}\big( R\otimes a^0 \big),
	\qquad
	b^{2r} = R ,
	\end{equation}
	satisfy the requirements \ref{en:defr1} to \ref{en:defr4} of Definition \ref{def:familyofeffectivesolution_alpha}.
\end{lemma}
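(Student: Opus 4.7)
The plan is to verify the four requirements (i)--(iv) of Definition \ref{def:familyofeffectivesolution_alpha} in turn. Requirements (i) and (iii) are immediate by inspection: $\check q^r\in\Sym^{2r+2}(\R^d)$, $R\in\Ten^{2r}(\R^d)$ and $R\otimes a^0\in\Ten^{2r+2}(\R^d)$, so the orders are correct, and $\check q^r$, $R$ and $S^{2r+2}(R\otimes a^0)$ are all fully symmetric by construction, hence in particular major-symmetric. For requirement (iv) I would exploit the idempotence of $S^{2r+2}$: writing
\[
a^{2r}-b^{2r}\otimes a^0=\check q^r+\bigl(S^{2r+2}(R\otimes a^0)-R\otimes a^0\bigr),
\]
the bracketed difference vanishes under $S^{2r+2}$ and $\check q^r$ already lies in $\Sym^{2r+2}(\R^d)$, which is precisely the identity $a^{2r}-b^{2r}\otimes a^0=_S\check q^r$ of \eqref{eq:constrbis}.

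The heart of the proof is requirement (ii). For $b^{2r}=R=\delta^*S^{2r}(\otimes^r a^0)$, positive semidefiniteness is immediate from Lemma \ref{lem:tensorProdOfA0} and the fact that $\delta^*\geq 0$. For $a^{2r}$, the key algebraic step is the \emph{symmetrization-collapse identity}
\[
S^{2r+2}(R\otimes a^0)=\delta^*\,S^{2r+2}\bigl(S^{2r}(\otimes^r a^0)\otimes a^0\bigr)=\delta^*\,S^{2r+2}\bigl(\otimes^{r+1}a^0\bigr),
\]
which holds because a full symmetrization absorbs any inner partial symmetrization. Consequently $M(S^{2r+2}(R\otimes a^0))=\delta^*A^{r}$, and the matricization property \eqref{eq:propertyM} applied to the fully symmetric tensors $\check q^r$ and $S^{2r+2}(R\otimes a^0)$ gives, for every $\xi\in\Sym^{r+1}(\R^d)$,
\[
a^{2r}\xi:\xi=\bigl(M(\check q^r)+\delta^*A^{r}\bigr)\nu(\xi)\cdot\nu(\xi)\geq\bigl(\lambda_{\min}(M(\check q^r))+\delta^*\lambda_{\min}(A^{r})\bigr)|\nu(\xi)|^2,
\]
where the last inequality is Weyl's bound for sums of symmetric matrices.

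It then remains to observe that Lemma \ref{lem:tensorProdOfA0} together with \eqref{eq:propertyM} yields $\lambda_{\min}(A^{r})>0$. This ensures on the one hand that the quantity $\delta^*$ in \eqref{eq:verifPosSemDef_alpha} is well defined, and on the other hand, by the very definition of $\delta^*$ as the positive part of $-\lambda_{\min}(M(\check q^r))/\lambda_{\min}(A^{r})$, that $\lambda_{\min}(M(\check q^r))+\delta^*\lambda_{\min}(A^{r})\geq 0$ in both the case $\lambda_{\min}(M(\check q^r))\geq 0$ (where $\delta^*=0$) and the case $\lambda_{\min}(M(\check q^r))<0$ (where the choice of $\delta^*$ exactly cancels the negative contribution). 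Hence $a^{2r}\xi:\xi\geq 0$, completing requirement (ii).

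The main obstacle I expect is spotting the symmetrization-collapse identity: a priori the correction term $S^{2r+2}(R\otimes a^0)$ is an arbitrary element of $\Sym^{2r+2}(\R^d)$, and only recognising that $R$ has been chosen precisely as a scalar multiple of $S^{2r}(\otimes^r a^0)$ reduces it to a scalar multiple of the positive definite reference $S^{2r+2}(\otimes^{r+1}a^0)$. Once this reduction is in hand, the rest of the argument is a routine eigenvalue manipulation and the formula for $\delta^*$ in \eqref{eq:verifPosSemDef_alpha} is essentially forced.
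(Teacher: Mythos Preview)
Your proof is correct and follows essentially the same route as the paper's own argument: verify (i) and (iii) by full symmetry, check (iv) by applying $S^{2r+2}$ to $a^{2r}-b^{2r}\otimes a^0$, and establish (ii) for $a^{2r}$ via the matricization identity together with the eigenvalue lower bound and the choice of $\delta^*$. The only difference is that you make explicit the symmetrization-collapse step $S^{2r+2}(R\otimes a^0)=\delta^*S^{2r+2}(\otimes^{r+1}a^0)$ and the positivity of $\lambda_{\min}(A^r)$, both of which the paper uses without comment.
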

\begin{proof}
	First, note that the orders of the tensors in \ref{en:defr1} are correct.
	Second, as $a^{2r},b^{2r}$ are fully symmetric tensors, they trivially satisfy the major symmetries and \ref{en:defr3} is verified.
	Next, we verify \ref{en:defr4}, i.e., $a^{2r},b^{2r}$ satisfy \eqref{eq:constrbis} (recall the meaning of $=_S$ in \eqref{eq:equalityS}):
	\[
	S^{2r+2}\big(a^{2r}-b^{2r}\otimes a^0\big)
	%= S^{2r+2}\big( \check{q}^r +  R\otimes a^0 \big) - S^{2r}\big( R \big)\otimes a^0
	= S^{2r+2}\big(\check{q}^r\big) + S^{2r+2}\big(R\otimes a^0\big) - S^{2r+2}\big(R\otimes a^0\big)
	= S^{2r+2}\big(\check{q}^r\big) .
	\]
	We are left with \ref{en:defr2}: verifying that $a^{2r},b^{2r}$ are positive semidefinite.
	The positive semidefiniteness of $b^{2r}=R$ follows directly from the non-negativity of $\delta^*$ and Lemma \ref{lem:tensorProdOfA0}.
	Finally, let us verify that $a^{2r}$ is positive semidefinite.
	Using \eqref{eq:propertyM} and the definitions in \eqref{eq:verifPosSemDef_alpha}, we have
	\[
	a^{2r}\xi : \xi 
	= M\big(\check{q}^r +  \delta^* S^{2 r+2}(\otimes^{r+1} a^0)\big)v_\xi\cdot v_\xi 
	= M(\check{q}^r) v_\xi\cdot v_\xi 
	+ \delta^* A^{r} v_\xi\cdot v_\xi
	\geq \big( \lambda_{\min}\big( M(\check{q}^r) \big)
	+\delta^*\lambda_{\min}(A^{r})	\big) |v_\xi|^2,
	\]
	where we denoted $v_\xi=\nu(\xi)$ and $|\cdot|$ the Euclidian norm.
	The definition of $\delta^*$ in \eqref{eq:verifPosSemDef_alpha} ensures that the right-hand side is nonnegative, proving that $a^{2r}$ is positive semidefinite.
	That concludes the verification of \ref{en:defr1} to \ref{en:defr4} and the proof of the lemma is complete.
\end{proof}

Lemma \ref{lem:constructionOfR} allows to compute the tensors of one effective equations in the family $\famEfEq$. We emphasize that this is one possible construction among others.
To obtain other effective equations, we have many options.
For example, note that replacing $\delta^*$ in \eqref{eq:verifPosSemDef_alpha} with any $\delta\geq \delta^*$
provides other valid pairs of tensors.
Alternatively, we can replace $S^{2r}(\otimes^{r} a^0)$ in the definition of $R$ with another positive definite tensor.
Finally, let us note the following alternative (used in \cite{AbP16} in the case $\alpha=2$).
\begin{remark}	\label{rem:deftensors_foralgo}
	Assume that the tensor $\check{q}^r$ in \eqref{eq:defCheckqr} can be decomposed as 
	$\check{q}^r =_S \check{q}^{r,1} - \check{q}^{r,2}\otimess a^0$, where $\check{q}^{r,2}\in\Ten^{2r}(\R^d)$ is positive semidefinite.
	Then we verify that the pairs \eqref{eq:pairofvalidtensors_alpha}, where $R$ is defined as
	\begin{equation}	\label{eq:verifPosSemDef_second_alpha}
	\begin{aligned}
	%Q^{r,1} = M(  S^{2r+2}(\check{q}^{r,1} )),\quad 
	&A^{r} = M\big(  S^{2 r+2}(\otimes^{r+1} a^0) \big),
	\quad
	\delta^* = \bigg\{ -\frac{\lambda_{\min}\big( M(  S^{2r+2}(\check{q}^{r,1} ))  \big)}{\lambda_{\min}(A^{r})}\bigg\}_+,\\
	&R = S^{2r}\big(\check{q}^{r,2} + \delta^* \otimes^{r} a^0\big),
	\end{aligned}
	\end{equation}
	define effective equations in the family.
\end{remark}

%\begin{remark}
%It is interesting to note that the construction \eqref{eq:verifPosSemDef_second_alpha} to obtain positive semidefinite tensors is not necessary for $r=1$, but it is necessary in one dimension for $r\geq2$.
%For $r=1$, we verify that (see Theorem \ref{thm:lemmawpeven_alpha})
%\[
%\check{q}^{1} = -\Gten^2 = -\big(a(\dy\chi^3 + \chi^2),1\big)_{Y} 
%	= -\big(a\dy\chi^2,\dy\chi^2\big)_{Y} + \big(a\chi^1,\chi^1\big)_{Y} + a^0\intMeanbig{(\chi^1)^2}_Y,
%	= a^0\intMeanbig{(\chi^1)^2}_Y,
%\]
%so that $\check{q}^{r,1}=0$, $\check{q}^{r,2}={\chi^2}_Y$
%and thus $a^2=0$, $b^2=\intMeanbig{(\chi^1)^2}_Y$ (as obtained in \cite[section 4.1]{AbP16}).
%However, for $r\geq2$, the \eqref{eq:verifPosSemDef_second_alpha} defines
%\[
%\big(a^{2r}, b^{2r}\big) =
%\left\{ 
%\begin{array}{ll}
%\big(0,  \intMeanbig{(\chi^r)^2}_Y+ |\check a^{2r}| \big) & \text{ if } \check{a}^{2r}\leq 0,\\[4pt]
%\big(\check{a}^{2r},  \intMeanbig{(\chi^r)^2}_Y \big) & \text{ if } \check{a}^{2r} > 0.
%\end{array}
%\right.
%\]
%As the sign of $\check a^{2r}$ is unknown a priori, this process is needed to guarantee the well-posedness of the corresponding effective equation.
%\end{remark}

% % % % % % % % % % % % % % % % % % % % % % % % % % % % % % % % % % % % % % % % % % % % %
% SECTION % % % % % % % % % % % % % % % % % % % % % % % % % % % % % % % % % % % % % % % %
% % % % % % % % % % % % % % % % % % % % % % % % % % % % % % % % % % % % % % % % % % % % %

\subsection{A new remarkable relation between the correctors to reduce the cost of computation of the effective tensors}	\label{subsec:algorithm_alpha}

Let us discuss the cost of the procedure described in the previous section.
For an integer $\alpha$, assume that we want to construct the tensors of an effective equation for a timescale $\mathcal{O}(\eps^{-\alpha})$: $a^0, \{a^{2r},b^{2r}\}_{r=1}^{s}$, where $s = \lfloor\alpha/2\rfloor$.
The main computational cost of this construction lies in the calculation of the tensors $\{\check{q}^r\}_{r=1}^{s}$ in \eqref{eq:defCheckqr}, which involves the tensors $\{S^{2r+2}(\Gten^{2r})\}_{r=1}^{s}$, where
\begin{equation}	\label{eq:defbis_hr}
\Gten^{{2r}}_{i_1\cdotss i_{2r+2}} = \frac{1}{|Y|}\psBig{ a \big(\naby\chi^{2r+1}_{i_2\cdotss i_{2r+2}} + e_{i_2} \chi^{2r}_{i_3\cdotss i_{2r+2}} \big) , e_{i_1} }_{Y}.
\end{equation}
Following this natural---but naive---formula, we thus need approximations of the correctors $\chi^{1}$ to $\chi^{2s+1}$.
In this section, we present a new relation between the correctors ensuring that in fact the correctors $\chi^{1}$ to $\chi^{s+1}$ are sufficient (Theorem \ref{thm:lemmawpeven_alpha}).

Let us quantify the computational gain achieved thanks to this result.
As each $\chi^r$ is a symmetric tensor valued function, it has $\binom{r+d-1}{r}$ distinct components (see Remark \ref{rem:correctorsAreSymmetric}).
The number of cell problems to solve to have all the distinct components of $\chi^{1}$ to $\chi^{k}$ is thus
\[
\mathrm{CP}(d,k) 
%\coloneqq \sum_{r=1}^{k} N(d,r)
= \sum_{r=1}^{k} \binom{r+d-1}{r} = \binom{k+d}{d} - 1.
\]
Hence, to compute $\{\check{q}^r\}_{r=1}^{s}$ Theorem \ref{thm:lemmawpeven_alpha} allows to avoid the approximation of $\chi^{s+2},\hdots,\chi^{2s+1}$, i.e., we spare the approximation of
\[
%\text{Nb of spared cell problems} = 
\mathrm{CP}(d,2s+1) - \mathrm{CP}(d,s+1) 
= \binom{2s+1+d}{d} -  \binom{s+1+d}{d}
\]
cell problems.
To fully appreciate this gain, assume that we want to compute the effective tensors of an effective equation for a timescale $\mathcal{O}(\eps^{-6})$, i.e. $s=\lfloor\alpha/2\rfloor=3$:
for $d=2$, we spare $21$ cell problems ($14$ cell problems to solve instead of $35$)
and if $d=3$, we spare $85$ cell problems ($34$ cell problems to solve instead of $119$).

\begin{remark}
	It is important to note that this gain is not specific to the effective equations presented here.
	Indeed, formula \eqref{eq:decompostionhr} can directly be used to compute the (sufficient) symmetric part of the high order tensors in the effective models from \cite{ALR18} and \cite{BeG17} discussed in Section \ref{subsec:comparisonGloria_alpha}.
\end{remark}

The new relation between the correctors is presented in the following result, proved in Section \ref{subsec:prooflemma_even}.
Note that this result was obtained independently in \cite[Remark 2.5]{DuO19} in the context of stochastic homogenization.
\begin{theorem}	\label{thm:lemmawpeven_alpha}
	Let $\{\chi^{k}_{i_1\cdotss i_{k}}\}_{k=1}^{2r+1}$ be the zero mean correctors defined in \eqref{eq:cellProblemsWave_compact}.
	Then, for $1\leq r\leq \lfloor \alpha/2\rfloor$, the tensor $\Gten^{2r}$ defined by
	\eqref{eq:defbis_hr} satisfies the decomposition
	\begin{equation}	\label{eq:decompostionhr}
	\begin{aligned}
	%\Gten^2 =_S -\Kten^1 + \Hten^r,			
	%\qquad
	\Gten^{2r}
	=_S
	(-1)^r\Kten^r + \Hten^r,
	%\quad 1\leq r\leq \lfloor \alpha/2\rfloor,
	\end{aligned}
	\end{equation}
	where the tensors $\Kten^r, \Hten^r \in\Ten^{2r+2}(\R^d)$ are defined as
	\begin{equation}	\label{eq:defpq_alpha}
	\begin{aligned}
	&%\big(\Kten^r\big)_{i_1\cdotss i_{2r+2}} 
	\Kten^r_{i_1\cdotss i_{2r+2}}
	= 
	-\intMeanBig{ a\naby\chi^{r+1}_{i_1\cdotss i_{r+1}}\cdot \naby\chi^{r+1}_{i_{r+2}\cdotss i_{2r+2}} }_Y
	+ \intMeanBig{a e_{i_2}\chi^r_{i_3\cdotss i_{r+2}}\cdot e_{i_1} \chi^r_{i_{r+3}\cdotss i_{2r+2}} }_Y,
	\\
	&\Hten^r =
	\sum_{j=1}^{\lceil{r}/{2}\rceil}
	\sum_{\ell=1}^{\lceil{r}/{2}\rceil}
	\intMeanBig{ \Dten^{2(r-j-\ell+1)} \otimess \chi^{2j-1} \otimess \chi^{2\ell-1} }_Y
	-\sum_{j=1}^{\lfloor{r}/{2}\rfloor}
	\sum_{\ell=1}^{\lfloor{r}/{2}\rfloor}
	\intMeanBig{ \Dten^{2(r-j-\ell)} \otimess \chi^{2j} \otimess \chi^{2\ell} }_Y
	,
	\end{aligned}
	\end{equation}
	where % we denoted $\Dten^{2s} = (-1)^s\Cten^s$ and
	the second double sum in the definition of $\Hten^r$ vanishes for $r=1$.
\end{theorem}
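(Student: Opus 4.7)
The plan is to start from the definition \eqref{eq:defbis_hr} of $\Gten^{2r}$ and use the cell problems \eqref{eq:cellProblemsWave_compact} together with the symmetry of the bilinear form $a(v,w)=\psbig{a\naby v,\naby w}_Y$ to progressively ``balance'' the index load between the two halves $(i_1,\ldots,i_{r+1})$ and $(i_{r+2},\ldots,i_{2r+2})$, moving one index at a time from $\chi^{2r+1}$ to a lower order corrector on the other half. The identity $=_S$ is what is ultimately required, so all the bookkeeping is done modulo the full symmetrization $S^{2r+2}$.

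First I would rewrite the first summand of $\Gten^{2r}_{i_1\cdots i_{2r+2}}$. By symmetry of $a(y)$, $\intMean{a\naby\chi^{2r+1}_{i_2\cdots i_{2r+2}}\cdot e_{i_1}}_Y=\intMean{ae_{i_1}\cdot \naby\chi^{2r+1}_{i_2\cdots i_{2r+2}}}_Y$, and since $\chi^{2r+1}_{i_2\cdots i_{2r+2}}\in\WperY$, the cell problem \eqref{eq:cellProblemsWave_compact_1} for $\chi^1_{i_1}$ yields $\intMean{ae_{i_1}\cdot\naby\chi^{2r+1}_{i_2\cdots i_{2r+2}}}_Y=-\intMean{a\naby\chi^1_{i_1}\cdot\naby\chi^{2r+1}_{i_2\cdots i_{2r+2}}}_Y$. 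After this step, $e_{i_1}$ has been converted to $-\naby\chi^1_{i_1}$, and $\chi^{2r+1}$ is now paired with $\chi^1$ in the energy form of $a$.

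Next, I apply the cell problem \eqref{eq:cellProblemsWave_compact_k} for $\chi^{2r+1}_{i_2\cdots i_{2r+2}}$ with test function $w=\chi^1_{i_1}$. This produces three types of contributions: (a) a boundary-like term $-\psbig{ae_{i_2}\chi^{2r}_{i_3\cdots i_{2r+2}},\naby\chi^1_{i_1}}_Y$ which, after one more application of the cell problem for $\chi^1_{i_1}$ (reversed), becomes the second term of $\Kten^r$ at the ``end'' of the iteration; (b) a term in which the order of the corrector being paired with $\chi^1$ drops from $2r+1$ to $2r$ and $2r-1$, ready to be balanced further; (c) the tensor-product corrections $\smallsum_{j=0}^{2r-1}\psbig{(\Pten^{2r-1-j}\otimes\chi^j)_{i_2\cdots i_{2r+2}},\chi^1_{i_1}}_Y$. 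I then repeat the same two-step move---symmetry of $a$, then cell problem---on the new inner product, now shifting one index from the high-order corrector on one side to a higher-order corrector on the other side. After $r$ such iterations, the pairing reaches $\chi^{r+1}\cdot\chi^{r+1}$ and $\chi^r\cdot\chi^r$, yielding exactly the two terms of $(-1)^r\Kten^r$, with the sign $(-1)^r$ accumulated from the $r$ applications of each cell problem (one sign change per step).

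The residual terms of type (c) generated along the way are where $\Hten^r$ comes from. Using the constraint \eqref{eq:constaintcr_alpha} and the definitions \eqref{eq:definitionCrPk_first}, $\Pten^{2s}=_S g^{2s}$ (with our macro convention $\Dten=\Gten=g$) and $\Pten^{2s+1}=0$. Consequently only pairs of correctors of matching parity ($\chi^{2j-1}$ with $\chi^{2\ell-1}$, and $\chi^{2j}$ with $\chi^{2\ell}$) survive, producing precisely the two double sums in the definition of $\Hten^r$ and explaining the relative minus sign (which comes from the $(-1)^{\text{number of sign flips}}$ accumulated before the correction term is generated in each branch). The upper limits $\lceil r/2\rceil$ and $\lfloor r/2\rfloor$ on the sums encode how far the balancing proceeds before hitting the center. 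The argument is naturally phrased by induction on $r$: the $g^{2(r-j-\ell+1)}$ and $g^{2(r-j-\ell)}$ appearing in $\Hten^r$ are of lower order and have already been expressed, by the inductive hypothesis, in terms of correctors up to $\chi^{r}$, so the final identity involves only $\chi^1,\ldots,\chi^{r+1}$.

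I expect the hard part to be purely combinatorial: keeping track of the symmetrization operators $S^{k+1}$ introduced by each application of \eqref{eq:cellProblemsWave_compact_k} and verifying that, after $r$ iterations, all the residual non-$\Kten^r$, non-$\Hten^r$ terms either cancel in pairs (by symmetry of $a$) or vanish modulo $S^{2r+2}$. In particular, ensuring that the $\Pten^{2s}\otimes\chi^j$ contributions collected in the different ``branches'' of the iteration assemble into the symmetric double-sum form of $\Hten^r$---and do so with the correct signs and multiplicities---is the delicate bookkeeping, and will require grouping terms according to the parity of the indices attached to each factor. The odd-index relation of Lemma \ref{lem:lemmawpodd_alpha} is used implicitly to discard ill-defined or asymmetric pieces that would otherwise obstruct the reduction.
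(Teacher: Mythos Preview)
Your approach is essentially the paper's: both rely on testing the cell problem for $\chi^{k}$ against $\chi^{2r+2-k}$ (together with the symmetry of $a$) to obtain identities of the form
\[
A^{k+1}\coloneqq\psbig{a\naby\chi^{k+1},\naby\chi^{2r+1-k}}_Y
\;=_S\;-B^{k}+B^{2r+1-k}+C^{k-1}-D^{k-1},
\]
and then combine these to reach the ``balanced'' term $A^{r+1}$. The difference is organizational. You describe an iterative scheme that shifts one index at a time and defers the ``delicate bookkeeping'' to the end; the paper instead forms the alternating sum $T=\sum_{k=0}^{2r}\sigma^k A^{k+1}$ with $\sigma^k=(-1)^{k+1}$ for $k\le r$ and $(-1)^k$ for $k>r$, uses the symmetry $A^{k+1}=_S A^{2r+1-k}$ to telescope $T$ down to $(-1)^{r+1}A^{r+1}$, and separately expands $T$ via the relation above into $B^0+U^1+U^2+U^3$. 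This telescoping packaging is exactly what makes the bookkeeping you flag manageable: it shows at once that the $B$-contributions cancel ($U^1=0$), the $C$-contributions collapse to $C^0+(-1)^{r+1}C^r$, and the $D$-contributions assemble into $-\Hten^r$ after a short change-of-index argument (the paper's Lemmas~5.10--5.11). Your iterative picture would reproduce these cancellations step by step, but without the alternating sum you would have to track them branch by branch.

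One remark in your proposal is off: the induction on $r$ is unnecessary. The tensors $\Dten^{2s}$ appearing in $\Hten^r$ are constants, and the formula \eqref{eq:defpq_alpha} already involves only $\chi^1,\ldots,\chi^{r}$ (since $2\lceil r/2\rceil-1\le r$ and $2\lfloor r/2\rfloor\le r$); there is nothing to reduce via an inductive hypothesis. The paper proves the identity for each fixed $r$ directly.
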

Observe that the tensor $\Hten^r$ only depends on the correctors $\chi^1$ to $\chi^{r}$
so that decomposition \eqref{eq:decompostionhr} indeed guarantees that $S^{2r+2}(\Gten^{2r})$ can be computed from $\chi^1$ to $\chi^{r+1}$.

\begin{remark}
	It can be verified that the homogenized tensor \eqref{eq:defhomogenizedTensor_alpha} satisfies $a^0 = \Gten^0 = \Kten^0$.
	Hence, decomposition \eqref{eq:decompostionhr} also holds in the case $r=0$ with $\Hten^0=0$ (recall that $\chi^0=1$). 
\end{remark}

% % % % % % % % % % % % % % % % % % % % % % % % % % % % % % % % % % % % % % % % % % % % %
% SECTION % % % % % % % % % % % % % % % % % % % % % % % % % % % % % % % % % % % % % % % %
% % % % % % % % % % % % % % % % % % % % % % % % % % % % % % % % % % % % % % % % % % % % %

\subsection{Matrix associated to a symmetric tensor of even order}	\label{subsec:matricizationOperator}

In this section, we construct the ``matricization'' operator used in the construction of Section \ref{subsec:constructionEffectiveTensors}.
This operator maps a given symmetric tensor of even order $\genTen$ to a matrix whose sign is the same as $\genTen$.

We consider the bilinear map
\begin{equation} \label{eq:mapdoubledotproduct_alpha}
\Sym^\inds(\R^d)\times\Sym^\inds(\R^d)\to \R,
\qquad (\xi,\eta)\mapsto 
\genTen\xi : \eta = \genTen_{i_1\cdots i_{\inds}i_{\inds+1}\cdots i_{2\inds}} 
\xi_{i_1\cdots i_{\inds}}\eta_{i_{\inds+1}\cdots i_{2\inds}}.
\end{equation}
Denote
$I(d,\inds)$ the set of multiindices of the distinct entries of a tensor in $\Sym^{\inds}(\R^d)$, i.e.,
\begin{equation}	\label{eq:defIdn}
I(d,\inds) = 
\big\{	i = (i_1,\hdots,i_\inds) : 1\leq i_1\leq \hdots \leq i_\inds\leq d	\big\}.
\end{equation}
We verify that the cardinality of $I(d,\inds)$ is
$ %\[
N(d,\inds) = |I(d,\inds)| = \binom{d+\inds-1}{\inds}.
$ %\]
We denote $J(d,\inds) = \{ 1,\hdots, N(d,\inds)\}$
and let $\ell:J(d,\inds)\to I(d,\inds)$ be a bijection.
We define then the bijective mapping 
\[
\nu : \Sym^\inds(\R^d) \to \R^{N(d,\inds)},
\qquad \xi\mapsto \nu(\xi),
\quad
\big(\nu(\xi)\big)_r = \xi_{\ell(r)} \quad r\in J(d,\inds).
\]
For $i\in I(d,\inds)$, let $z(i)$ be the number of multiindices in $\{1,\hdots,d\}^\inds$ that are equal to $i$ up to symmetries, i.e.,
\[
z(i) = \big| \{ j\in\{1,\hdots ,d\}^\inds : \text{ there exists a permutation } \sigma \text{ s.t. } \sigma(j) = i\} \big|.
\]
With these notations, we rewrite the map defined in \eqref{eq:mapdoubledotproduct_alpha} as
\[
\genTen\xi :\eta
= \sum_{i,j\in I(d,\inds)} z(i) z(j) \genTen_{ij} \xi_i \eta_j
= \sum_{r,s=1}^{N(d,\inds)}  z\big(\ell(r)\big) z\big(\ell(s)\big) \genTen_{\ell(r)\ell(s)} \xi_{\ell(r)} \eta_{\ell(s)}.
\]
Finally, we define the matrix associated to a tensor as
\begin{equation}	\label{eq:matrixassociatedtotensor_alpha}
\begin{aligned}	
M:\Sym^{2\inds}(\R^d)&\to \Sym^2(\R^{N(d,\inds)}),
\\
\genTen &\mapsto M(\genTen)
\quad
\big(M(\genTen)\big)_{rs} = z\big(\ell(r)\big) z\big(\ell(s)\big) \genTen_{\ell(r)\ell(s)}
\quad m,n\in J(d,\inds).
\end{aligned}
\end{equation}
We verify that for any $\xi,\eta\in\Sym^{n}(\R^d)$, $\genTen\xi :\eta = M(\genTen) \nu(\xi) \cdot \nu(\eta)$.
Hence, $\genTen$ is positive definite (resp. semidefinite) if and only if $M(\genTen)$ is positive definite (resp. semidefinite).
In particular, property \eqref{eq:propertyM} holds.

% % % % % % % % % % % % % % % % % % % % % % % % % % % % % % % % % % % % % % % % % % % % %
% SECTION % % % % % % % % % % % % % % % % % % % % % % % % % % % % % % % % % % % % % % % %
% % % % % % % % % % % % % % % % % % % % % % % % % % % % % % % % % % % % % % % % % % % % %

\subsection{Algorithm for the computation of the high order effective tensors}	\label{subsec:algo}

We present here the algorithm for the computation of the effective tensors of one equation in the family $\famEfEq$. 
The procedure relies on the construction explained in Lemma \ref{lem:constructionOfR}, the formula provided by Theorem \ref{thm:lemmawpeven_alpha},
and the ``matricization'' operator $M$ defined in \eqref{eq:matrixassociatedtotensor_alpha}.
The set of index $I(d,n)$ is defined in \eqref{eq:defIdn}.

\setcounter{algorithm}{\value{theorem}}
\begin{breakablealgorithm}	
	%    \SetKwInOut{Input}{Input}
	%    \SetKwInOut{Output}{Output}
	\caption{Compute the tensors of an effective equation \eqref{eq:effectiveEquation_alpha} in the family $\famEfEq$.}
	\label{algo:alpha}
	\begin{algorithmic}[1]
		%\Statex
		%\For{$i =1,\hdots,d$}
		\Require{$Y$-periodic, $d\times d$ symmetric tensor $a(y)$ satisfying \eqref{eq:uniformEllipticity_lpd}; timescale $\alpha$.}
		\Ensure{effective tensors $a^0, \{a^{2r},b^{2r}\}_{r=1}^{\lfloor \alpha/2 \rfloor}$.}
		%\Statex
		\vspace{5pt}
		\State{for all $i\in I(d,1)$ solve the cell problem for $\chi^1_i$ in \eqref{eq:cellProblemsWave_compact_1} 
			with $\intMeanbig{\chi^1_i}_Y = 0$}	
		%\EndFor
		\State{for all $i\in I(d,2)$\quad 
			$\Cten^0_{i_1i_2} = a^0_{i_1 i_2} = - \intMeanbig{a\naby\chi_{i_2}^1\cdot\naby\chi_{i_1}^1}_Y + \intMeanbig{ae_{i_2}\cdot e_{i_1}}_Y$ }
		\For{$r =1,\hdots,\lfloor \alpha/2 \rfloor$}
		%\State{
		%	if $r=2\ell$ is even $\Pten^{2\ell}=(-1)^\ellc^\ell$,
		%	if $r=2\ell-1$ is odd $\Pten^{2\ell-1}=0$
		%}
		\State{
			for all $i \in I(d,r+1)$ solve the cell problems for $\chi^{r+1}_{i_1\cdotss i_{r+1}}$ in \eqref{eq:cellProblemsWave_compact_k} with $\intMeanbig{\chi^{r+1}_{i_1\cdotss i_{r+1}}}_Y = 0$
		} 
		\State{
			compute the tensor $S^{2r+2}\big(\Gten^{2r}\big)$ with formula \eqref{eq:decompostionhr}
		}
		\State{
			compute the tensor $\check{q}^r$ with formula \eqref{eq:defCheckqr}
		}
		\State{
			build the matrices $M(\check{q}^{r})$, $A^{r}= M\big( S^{2r+2}(\otimes^{r+1} a^0)\big)$
			and	compute $\delta^* =  \bigg\{ -\displaystyle\frac{\lambda_{\min}(M(\check{q}^{r}))}{\lambda_{\min}(A^r)}\bigg\}_+$
		}
		\State{
			compute
			$a^{2r} = \check{q}^{2r} + \delta^*S^{2r+2}\big(\otimes^{r+1} a^0\big)$,
			\quad $b^{2r} = \delta^*S^{2r}\big(\otimes^{r} a^0\big)$,\quad
			$\Cten^r = a^{2r} - \displaystyle\sum_{\ell=0}^{r-1} \Cten^{\ell} \otimess b^{2(r-\ell)}$
		}
		\EndFor
	\end{algorithmic}
\end{breakablealgorithm}
\setcounter{theorem}{\value{algorithm}}

% % % % % % % % % % % % % % % % % % % % % % % % % % % % % % % % % % % % % % % % % % % % %
% SECTION % % % % % % % % % % % % % % % % % % % % % % % % % % % % % % % % % % % % % % % %
% % % % % % % % % % % % % % % % % % % % % % % % % % % % % % % % % % % % % % % % % % % % %

\section{Numerical experiments} \label{sec:numexp_alpha}

In this section, we present numerical experiments to illustrate the result of Theorem \ref{thm:est_epsm_alpha}:
the effective equations in the family capture the long-time behavior of $\ueps$.
Note that reporting numerical error in the approximation of $\ueps$ in a pseudo-infinite medium for very large timescales is not conceivable as computing a reference solution is out of reach even for one-dimensional problems.
We can however consider a small periodic domain as this setting is covered by our theory. 
In addition, we will illustrate that high order effective models are also useful when we deal with high frequency regimes. 

We consider the one-dimensional model problem \eqref{eq:waveEquationEpsilon_lpd} given by the data
$\cini(x) = e^{-4{x^2}}$, $\dini=f=0$,
$a(y) = \sqrt{2} -\cos(2\pi y)$ (we verify that $a^0=1$)
with $\eps=1/10$, and the periodic domain $\Omega= (-L,L)$, $L=84$.
Let us give some insight on the macroscopic evolution of $\ueps$:
the central pulse $\cini$ separates into left- and right-going waves packets with speed $a^0=\pm1$; these packets meet at $x=L$ (equivalently $x=-L$) for $t=L+2kL$, $k\in\N$, and at $x=0$ for $t=2kL$, $k\in\N$. 
As time increases, dispersion appears in each packet. 
As the domain has a periodic boundary, at some point in time the packets start to superpose. In our settings, $t=\eps^{-4}$ is the larger timescale until the bulk of the dispersion spans approximately half of the domain $\Omega$.
%, but we can still investigate the accuracy of the effective models.
For $\alpha=0,2,4$, we compare $\ueps$ with the corresponding effective solutions of order $s=\lfloor{\alpha/2}\rfloor$, denoted $\tilde{u}^{\lfloor{\alpha/2}\rfloor}$ ($\tilde{u}^{0}$ is the homogenized solution).
We approximate $\ueps$ using a spectral method grid of size $h=\eps/16$ and a leap frog scheme for the time integration with $\Delta t = h/16$.
The effective solutions $\{\tilde{u}^s\}_{s=0}^2$ are approximated with a Fourier method (the effective coefficients are computed using Algorithm \ref{algo:alpha}) on a grid of size $h=\eps/16$ (no time integration). 
Details on the numerical methods can be found in \cite[section 5.3]{Pou17}.

The results are displayed in Figure \ref{fig:alpha_1d}.
In the top-left plot, we compare $\ueps$, $\tildeub^0$, $\tildeub^1$ at $t=\eps^{-2}=10^2$.
We observe that $\tildeub^0$ does not capture the macroscopic dispersion developed by $\ueps$, while $\tildeub^1$ accurately describes it.
In the top-right plot, we compare $\ueps$, $\{\tildeub^s\}_{s=0}^2$, at $t=\eps^{-4}=10^4$
({observe} that right- and left- going wave packets are in fact superposed).
At this timescale, the first order effective solution $\tildeub^1$ does not capture accurately the dispersion anymore, as expected, but the second order $\tildeub^2$ does.
In the bottom plot, we compare the normalized errors
$
\mathrm{err} (\tilde{u}^{s})(t) = \norm{(u^\eps-\tilde{u}^s)(t)}_\LdOm / \norm{u^\eps(t)}_\LdOm
$
for the effective solutions $\{\tildeub^s\}_{s=0}^2$ on the time interval $[0,\eps^{-4}]$ (the $x$-axis is in log scale).
We observe that $\tildeub^0$ is accurate up to $\eps^{-1}$ and then deteriorates.
The first order $\tildeub^1$ is accurate up to $\eps^{-3}$, then deteriorates.
Finally, the second order $\tildeub^2$ has a satisfying accuracy over the whole time interval $(0,\eps^{-4})$.
The observations of this experiment corroborate the result of Theorem \ref{thm:est_epsm_alpha}:
the effective solution $\tilde{u}^{\lfloor{\alpha/2}\rfloor}$ accurately describe $\ueps$ up to $\mathcal{O}(\eps^{-\alpha})$ timescales.
\begin{figure}[!ht]
	\begin{center}
		\begin{tabular}{@{}c@{\hspace{1cm}}c@{}}
			\includegraphics[width=6cm]{./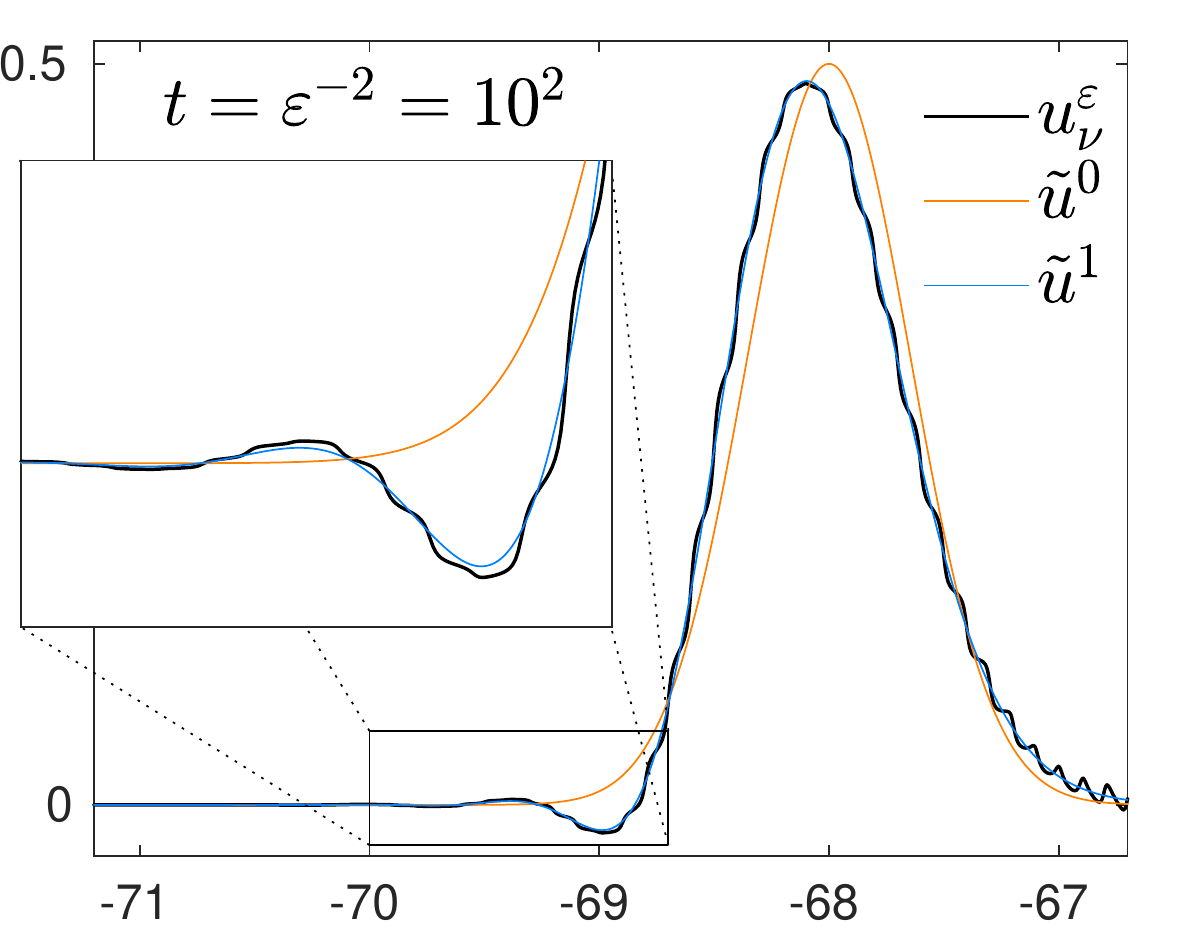}
			&
			\includegraphics[width=6cm]{./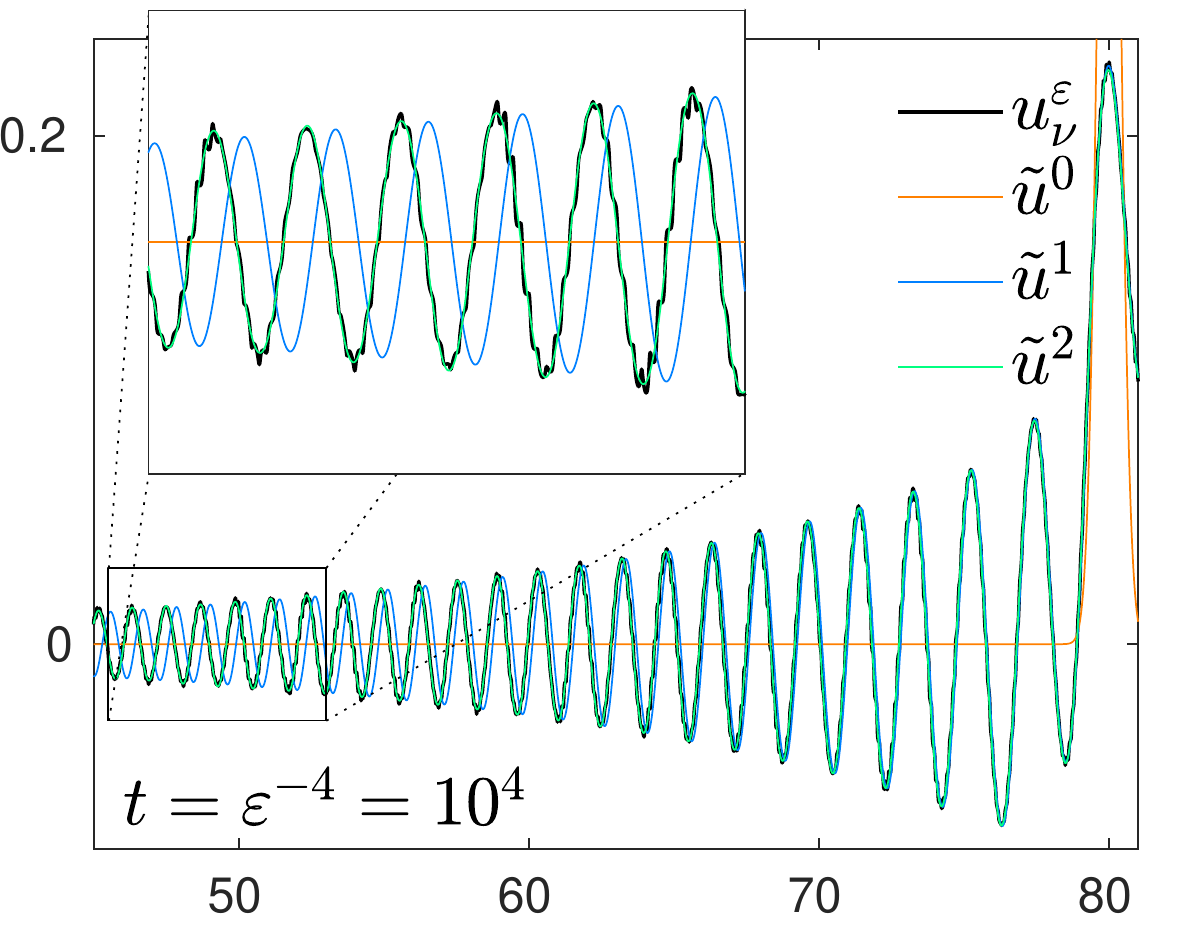}
		\end{tabular}
		\includegraphics[width=7cm]{./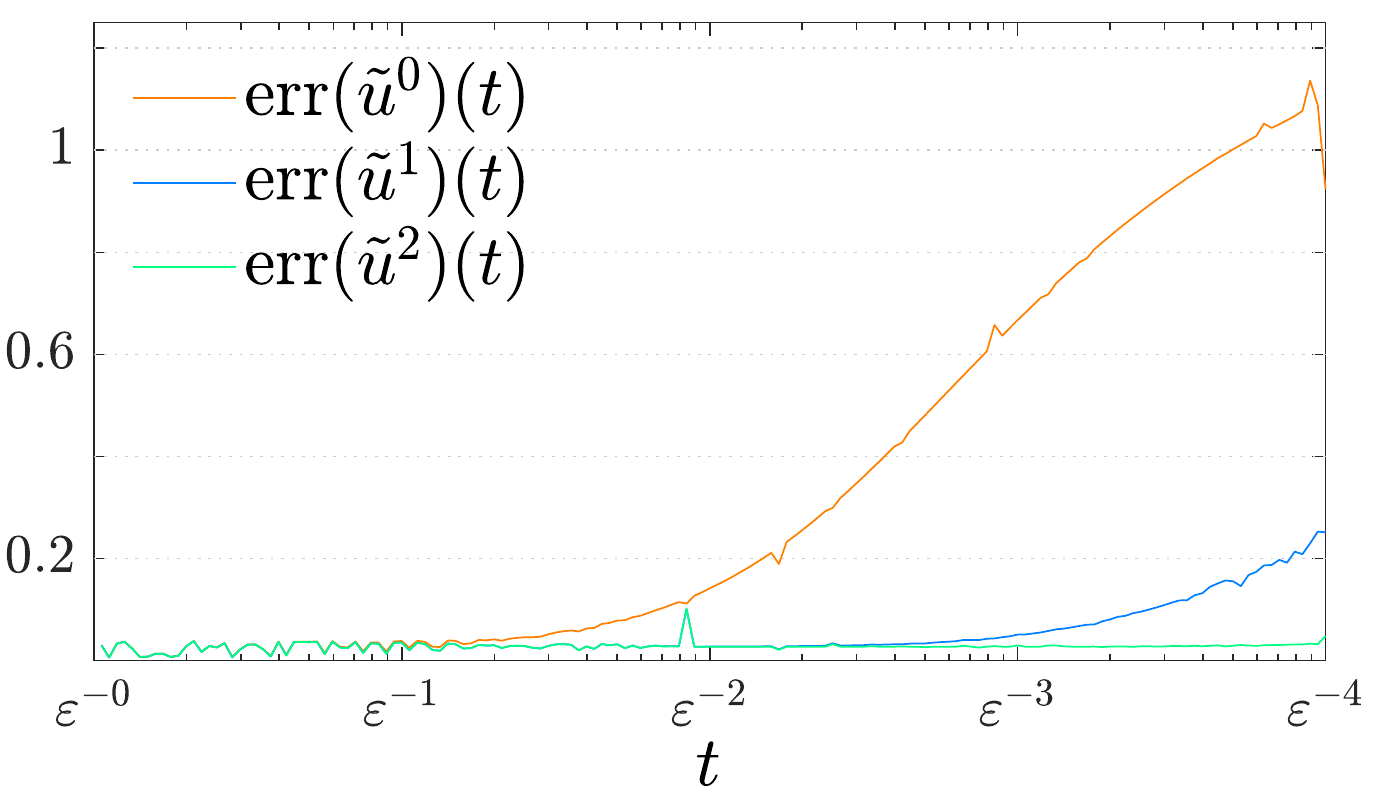}
		\caption{
			Top-left: Comparison of $\ueps$ with the homogenized solution $\tildeub^0$ and the order $1$ effective solution $\tildeub^1$ at $t=\eps^{-2}$.
			Top-right: Comparison of $u^\eps$ with $\tildeub^0$, $\tildeub^1$, and an order $2$ effective solution $\tildeub^2$ at $t=\eps^{-4}$.
			Bottom: Plot of the errors between $\ueps$ and $\tildeub^s$, $0\leq s\leq 3$, over the time interval $[0,\eps^{-4}]$.
		}
		\label{fig:alpha_1d}
	\end{center}
\end{figure}

In two dimensions, computing a reference solution for the previous experiment has a huge computational cost.
However, we can illustrate an interesting fact: high order effective models are useful in high frequency regimes.
Let us first give some insights on this fact.
We consider the following specific settings:
let $a\ofxsureps$ be a given tensor with $\eps>0$ fixed, $g(x)=e^{\beta^2|x|^2}$ be a Gaussian with $\beta=\mathcal{O}(1)$ and $\nu>0$ a scaling parameter.
In \eqref{eq:waveEquationEpsilon_lpd}, we let $f=0$ and the initial conditions $\cini(x) = g(\nu x)$ and $\dini=0$ (we assume that $\Omega$ is arbitrarily large).
To specify the dependence of $\ueps$ on the parameter $\nu$, we denote it as $u^\eps_\nu$.
In the equation for $u^\eps_\nu$, making the changes of variables
$\hat x =\nu x$ 
and
$\hat t =\nu t$
and introducing $\hat u^\eps_\nu(\hat t,\hat x) = u^\eps_\nu(\hat t/\nu,\hat x/\nu)$, we find
\begin{equation*}	%\label{eq:condiniequ2_bis_emci}
%\begin{array}{ll}
\pa_{\hat t}^2 \hat u^\eps_\nu(\hat t,\hat x) = \nabla_{\hat x}\big(a\big(\tfrac{\hat x}{\nu\eps}\big)\nabla_{\hat x} \hat u^\eps_\nu(\hat t,\hat x)\big)	,
\qquad% &	(\hat t,\hat x)\in\R_+\times\R,\\[1pt]
\hat u^\eps_\nu(0,\hat x) = g(\hat x),\quad\pa_{\hat t} \hat u^\eps_\nu(0,\hat x) = 0,		%\hat x\in\R.
%\end{array}
\end{equation*}
and we conclude that $u^\eps_\nu(\hat t/\nu,\hat x/\nu) = \hat{u}^\eps_\nu(\hat t,\hat x) = u^{\nu\eps}_1(\hat t,\hat x)$.
In other words, for $\nu>1$ (i.e., an increase in the frequencies of the initial wave) the long-time effects of $u^{\nu\eps}_1$ can be observed at a shorter time in $u^\eps_\nu$ (modulo a contraction of space). %(see \cite[section 5.3]{Pou17})
However, for high values of $\nu$ we meet situations where Theorem \ref{thm:est_epsm_alpha} does not provide a satisfactory error estimate:
on the one hand, in the estimate for $u^\eps_\nu$, the increase of $\nu$ deteriorates the error constant in Theorem \ref{thm:est_epsm_alpha};
one the other hand, in the estimate for $u^{\nu\eps}_1$, the constant is good, but the period of the tensor $\eps'=\nu\eps$ is to far from the asymptotic regime for homogenization to be meaningful.
In practice, we observe that the increase of the frequencies of the initial wave (increase of $\nu$)
leads to additional dispersive effects in $u^\eps_\nu$ (or $u^{\nu\eps}_1$). 
Furthermore, the use of higher order effective models allows to capture these additional effects. 

To illustrate this, we consider the two dimensional model problem given by the data $f=\dini=0$ and
\[
\cini(x) = g(\nu x),
~
g(x)  = e^{-20{|x|^2}},
~
\nu = 5^{1/3},
\quad
a(y)
= 
\begin{pmatrix}
1 - 0.5\cos(2\pi y_2) & 0 \\
0 & 1 - 0.5\cos(2\pi y_2)
\end{pmatrix}
,
\quad
\eps=1/10.
\]
We compute the effective tensors using Algorithm \ref{algo:alpha} and approximate $u^\eps_\nu$ and $\{\tilde{u}^s_\nu\}_{s=1}^3$ at time $t=20$.
As above, $\ueps$ is approximated with a spectral method on a grid of size $h=\epsilon/16$ and leap frog scheme with $\Delta t = h/100$ and $\{\tildeub^s\}_{s=0}^2$ are computed with a Fourier method on a grid of size $h=\eps/16$ (see \cite[section 5.3]{Pou17}).
In Figure \ref{fig:alpha_2d}, we compare the solution in the periodic medium $u^\eps_\nu$ (top-left)
with the effective solutions of order $1$, $\tilde{u}^1_\nu$ (top-right), and order $2$, $\tilde{u}^2_\nu$ (bottom-left).
We observe that $\tilde{u}^2_\nu$ captures more accurately the dispersion developed by $u^\eps_\nu$ than $\tilde{u}^1_\nu$.
This is even better seen in the 1d cut at $\{x_1=0\}$ in the bottom-right plot of Figure \ref{fig:alpha_2d}.
Furthermore, even though distinguishing the higher order dispersion from the $\eps$-scale oscillation is not easy in this regime, in the zoom we can guess that the model of order $3$ is better than the order $2$.

\begin{figure}[!ht]
	\begin{center}
		\begin{tabular}{@{}c@{\hspace{0.0cm}}c@{}}
			\includegraphics[width=7.32cm]{./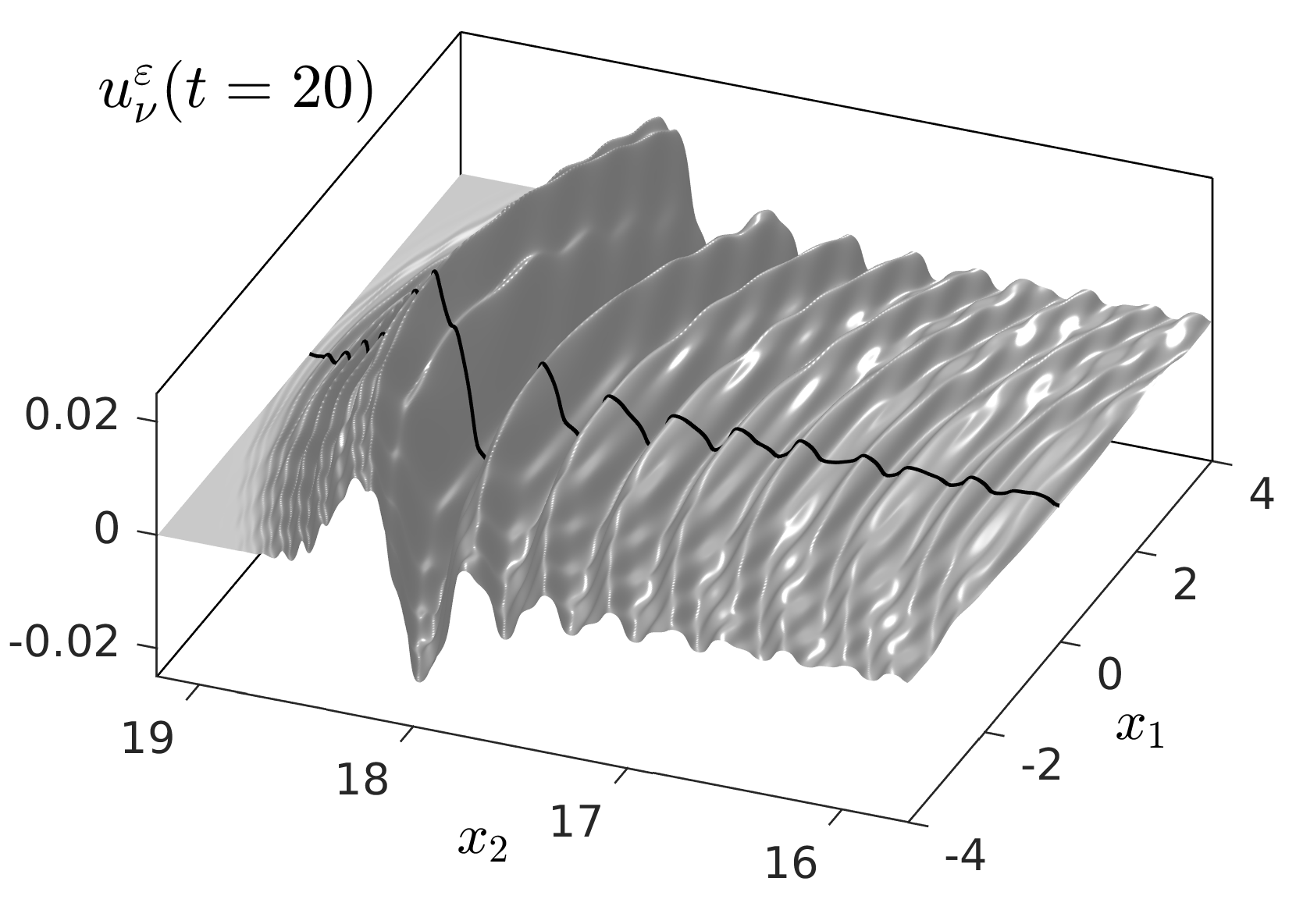}
			&
			\includegraphics[width=7.32cm]{./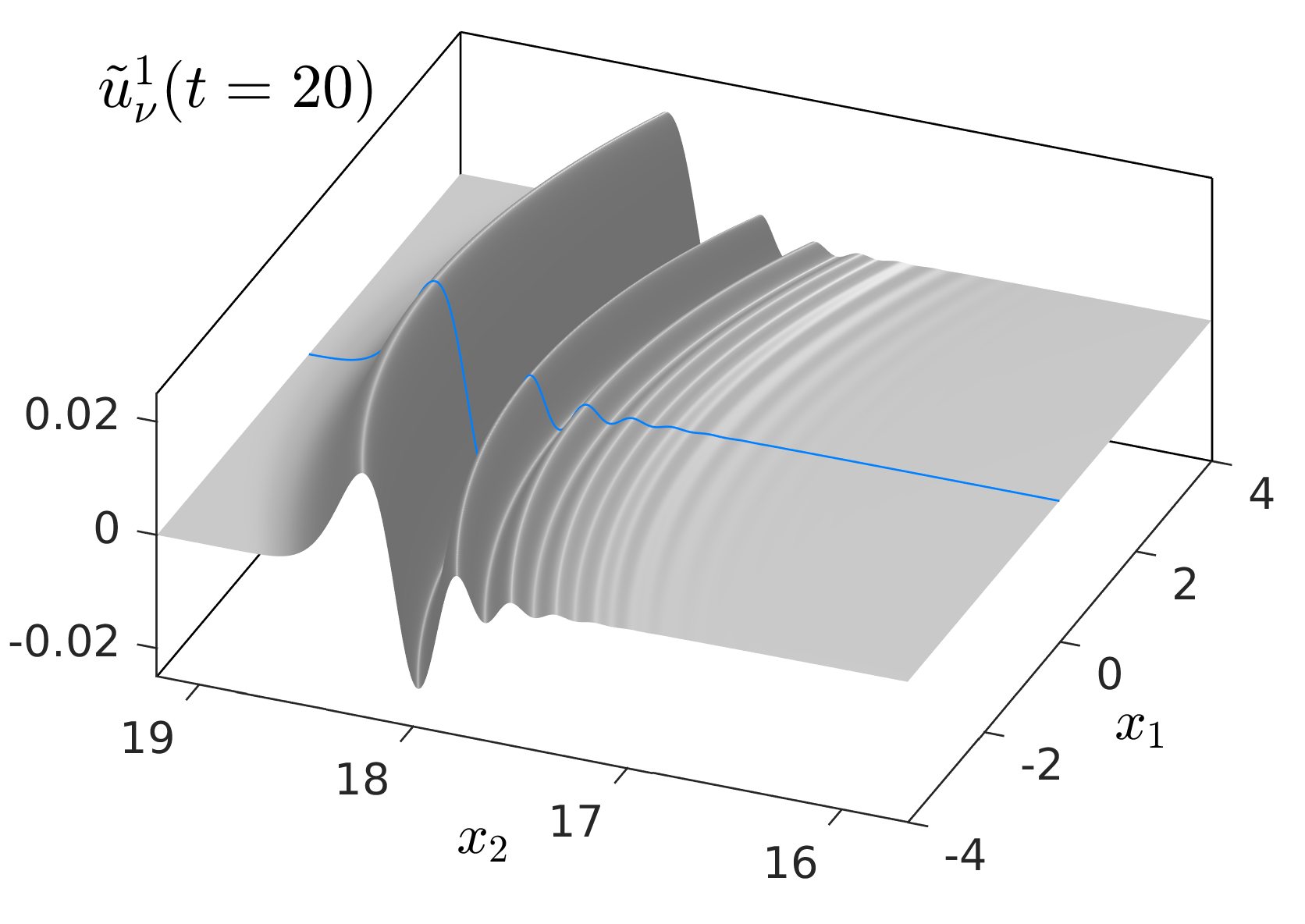}
			\\
			\includegraphics[width=7.32cm]{./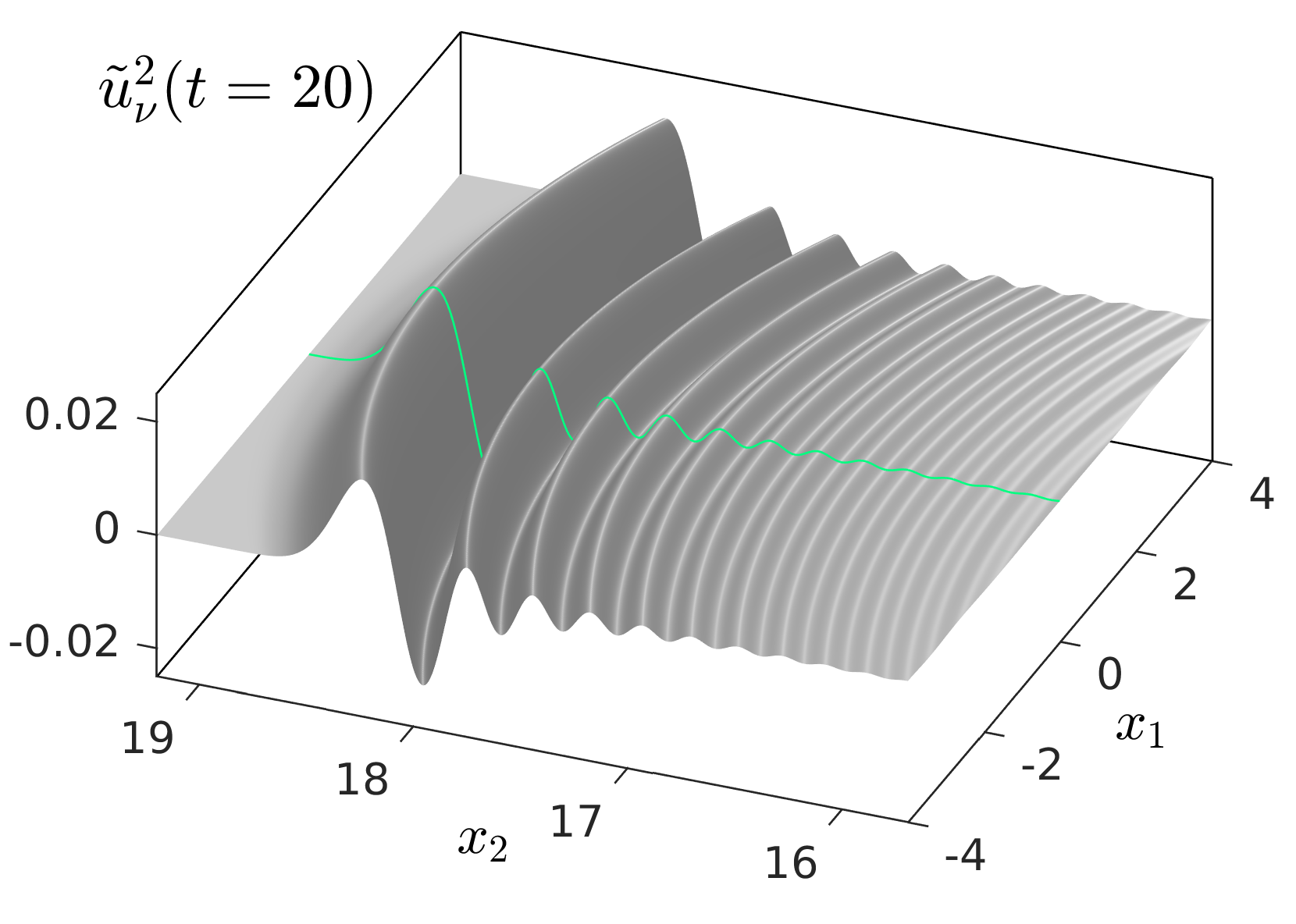}
			&
			\includegraphics[width=6cm]{./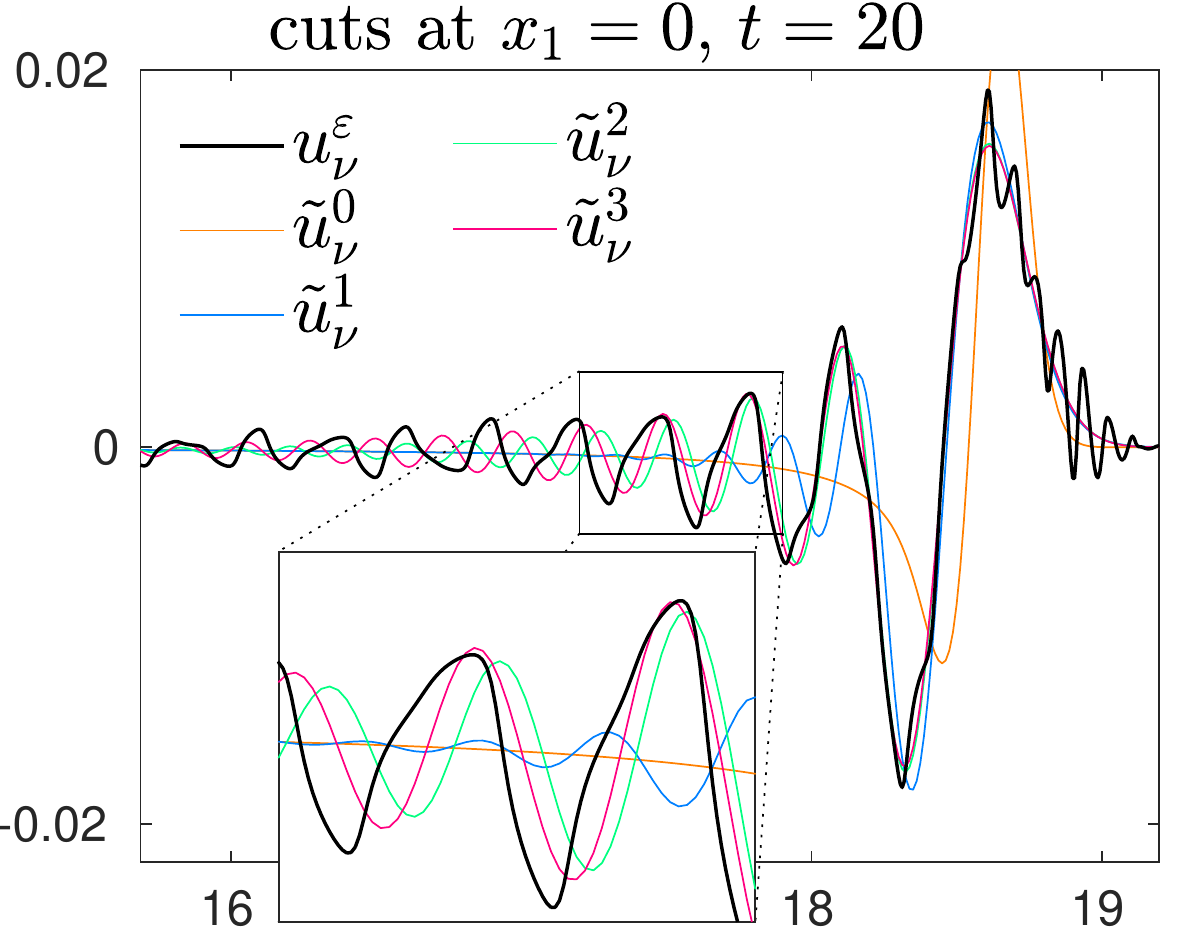}
		\end{tabular}
		\caption{
			Comparison of $\ueps_\nu$ and $\tilde{u}^s_\nu$ at $t=20$ on the subdomains $[-4,4]\times[\sqrt{a^0_{22}}t-5/\nu,\sqrt{a^0_{22}}t+1/\nu]$,
			and the corresponding cuts along $x_1=0$.
		} 
		\label{fig:alpha_2d}
	\end{center}
\end{figure}

\section{Proofs of the main results}	\label{sec:proofs}

In this section, we provide the proofs of the main results of the paper.
In \ref{subsec:proofbousstrick_both}, we prove the inductive Boussinesq tricks: Theorem \ref{thm:rewritedtttildeub_2_alpha} and Lemma \ref{lem:inductiveBoussinesqTricks_alpha}.
In \ref{subsec:summaryproof}, the a priori error estimate for the family of effective equations of Theorem \ref{thm:est_epsm_alpha} is proved.
In \ref{subsec:signOfEvenOrderTensor}, we prove the result on positive definite tensors from Lemma \ref{lem:tensorProdOfA0} and in \ref{subsec:prooflemma_even}, we prove the new relation between the correctors from Theorem \ref{thm:lemmawpeven_alpha}.

% % % % % % % % % % % % % % % % % % % % % % % % % % % % % % % % % % % % % % % % % % % % %
% % % % % % % % % % % % % % % % % % % % % % % % % % % % % % % % % % % % % % % % % % % % %

\subsection{Inductive Boussinesq tricks for the derivation of the family of effective equations}	\label{subsec:proofbousstrick_both}

In this section, we present the technical task that was postponed in the derivation of the family of effective equations in Section \ref{subsec:derivationFamily_subsec}.
Specifically, we provide the result allowing to substitute the time derivatives in the terms of order $\mathcal{O}(\eps^0)$ to $\mathcal{O}(\eps^{\alpha})$ in $r^\eps$ \eqref{eq:waveAsymptoticDeveloppment_alpha}.
To that end, the main challenge is to proceed to inductive Boussinesq tricks (Theorem \ref{thm:rewritedtttildeub_2_alpha}).
This result is then used in Lemma \ref{lem:inductiveBoussinesqTricks_alpha}, which provides the specific relation used in Section \ref{subsec:derivationFamily_subsec}.

\subsubsection{Inductive Boussinesq tricks}	\label{subsec:proofbousstrick}

The following theorem is the key result of this section.
\def\indk{N}
\begin{theorem}	\label{thm:rewritedtttildeub_2_alpha}	
	Let $\tildeub$ be the solution of \eqref{eq:effectiveEquation_alpha} and let $\indk$ be an even integer such that $0\leq \indk\leq 2\lfloor\alpha/2\rfloor$.
	If the right-hand side of \eqref{eq:effectiveEquation_alpha} is defined as% in \eqref{eq:defQf},
	\begin{equation}	\label{eq:defQf}
	Qf 	= f + \sum_{r=1}^{\lfloor \alpha/2 \rfloor}(-1)^r\eps^{2r} b^{2r} \nabxn{2r} f,
	\end{equation}
	then $\tildeub$ satisfies
	\begin{equation}	\label{eq:rewritedtttildeub_2_alpha}
	\dtt\tildeub = f + \sum_{r=0}^{\indk/2} \epsilon^{2r} (-1)^r\Cten^r \nabxn {2r+2}\tildeub 
	+ \mathcal{S}^\eps_{\indk} f
	+ \mathcal{R}^\eps_{\indk} \tildeub,
	\end{equation}
	where $\Cten^r$ is the tensor defined inductively as
	\begin{equation}	\label{eq:definitionCr}
	\Cten^0 = a^0,
	\qquad
	\Cten^r = a^{2r} - \sum_{\ell=0}^{r-1} \Cten^\ell \otimes b^{2(r-\ell)} 
	\quad 1\leq r\leq \lfloor\alpha/2\rfloor,
	\end{equation}
	and the remainders $\mathcal{R}^\eps_{\indk}\tildeub$ and 
	$\mathcal{S}^\eps_{\indk} f$ are defined in \eqref{eq:defReps0} and satisfy the following estimates:
	for any integer $n$ such that $\indk+n\leq \alpha$
		\begin{equation}	\label{eq:estimateR1eps}
		\begin{aligned}
		%\norm{\nabxn{n}(\mathcal{R}^\eps_{\indk}\tildeub)}_{\Li(0,\eps^{-\alpha}T; \LdOm)}
		%&\leq C \eps^{\indk+2}\Bigg( 
		%\sum_{k=\indk+n+4}^{r(\alpha,N,n)+2}\seminorm{\tildeub}_{\Li(0,\eps^{-\alpha}T;\HkOm{k})}
		%+ \norm{\dtt\tildeub}_{\Li(0,\eps^{-\alpha}T;\HkOm{r(\alpha,N,n)})}
		%\Bigg),\\
		%\norm{\nabxn{n}(\mathcal{S}^\eps_{\indk}f)}_{\Li(0,\eps^{-\alpha}T; \LdOm)}
		%&\leq C \eps^{\indk+2} \norm{f}_{\Li(0,\eps^{-\alpha}T;\HkOm{r(\alpha,N,n)})},
		\norm{\nabxn{n}(\mathcal{R}^\eps_{\indk}\tildeub)}_{\Li\Lti\LdOm}
		&\leq C \eps^{\indk+2}
		\sum_{j=\indk+n+2}^{r(\alpha,N,n)} \Big(\norm{\nabxn{j+2}\tildeub}_{\Li\Lti\LdOm} + \norm{\nabxn{j}\dtt\tildeub}_{\Li\Lti\LdOm}	\Big),\\
		\norm{\nabxn{n}(\mathcal{S}^\eps_{\indk}f)}_{\Lu\Lti\LdOm}
		&\leq C \eps^{\indk+2} \sum_{j=\indk+n+2}^{r(\alpha,N,n)}\norm{\nabxn{j}f}_{\Lu\Lti\LdOm},
		\end{aligned}
		\end{equation}
	where %$\|\cdot\|$ \mt{is the $\fcnL^{p}(0,\Teps;\LdOm)$ norm},
	$r(\alpha,\indk,n) = 2\lfloor\alpha/2\rfloor+\indk+n$
	and the constants depend only on the tensors $a^0,\{a^{2r},b^{2r}\}_{r=1}^{\lfloor\alpha/2\rfloor}$.
\end{theorem}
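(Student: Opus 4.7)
The plan rests on an algebraic reformulation of the effective equation using the specific choice of $Qf$ in \eqref{eq:defQf}. Writing \eqref{eq:effectiveEquation_alpha} as $B\dtt\tildeub = A\tildeub + Qf$ with
\[
B = I + \sum_{r=1}^{\lfloor\alpha/2\rfloor}(-1)^r\eps^{2r}b^{2r}\nabxn{2r}, \qquad A = \sum_{r=0}^{\lfloor\alpha/2\rfloor}(-1)^r\eps^{2r}a^{2r}\nabxn{2r+2},
\]
we observe that $Qf = Bf$, so the equation becomes $B(\dtt\tildeub - f) = A\tildeub$. Formally inverting $B$ as a Neumann series in $\eps^2$ yields
\[
\dtt\tildeub - f = B^{-1}A\tildeub = \sum_{r\geq 0}(-1)^r\eps^{2r}\Cten^r\nabxn{2r+2}\tildeub
\]
with the coefficients $\Cten^r$ given by \eqref{eq:definitionCr}; indeed the identity $B\big(\sum_{r\geq 0}(-1)^r\eps^{2r}\Cten^r\nabxn{2r+2}\big) = A$ is equivalent to the recursion \eqref{eq:definitionCr}. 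The theorem is the rigorous $\eps$-truncation of this expansion, with errors tracked in two separate pieces: $\mathcal{R}^\eps_N\tildeub$ gathering terms depending on $\tildeub$ (including $\dtt\tildeub$), and $\mathcal{S}^\eps_N f$ gathering the leftover terms in $f$.

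The proof proceeds by induction on the even integer $N$. For the base case $N=0$, the identity \eqref{eq:rewritedtttildeub_2_alpha} is read directly off the effective equation with
\[
\mathcal{S}^\eps_0 f = Qf - f = \sum_{r=1}^{\lfloor\alpha/2\rfloor}(-1)^r\eps^{2r}b^{2r}\nabxn{2r}f, \qquad \mathcal{R}^\eps_0\tildeub = \sum_{r=1}^{\lfloor\alpha/2\rfloor}(-1)^r\eps^{2r}\big(a^{2r}\nabxn{2r+2}\tildeub - b^{2r}\nabxn{2r}\dtt\tildeub\big),
\]
and the estimates \eqref{eq:estimateR1eps} are immediate since every summand carries an $\eps^{2r}$ factor with $r\geq 1$ and involves derivatives bounded by $2\lfloor\alpha/2\rfloor + n + 2$.

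For the inductive step from $N$ to $N+2$, I apply the level-$N$ identity for $\dtt\tildeub$ to substitute every occurrence of $\dtt\tildeub$ inside $\mathcal{R}^\eps_N\tildeub$. Collecting terms by $\eps$-power, the coefficient of $\nabxn{N+4}\tildeub$ at order $\eps^{N+2}$ accumulates contributions from all previous substitutions together with the new one; by the recursion \eqref{eq:definitionCr}, this coefficient telescopes to $(-1)^{N/2+1}\Cten^{N/2+1}$, which becomes the new term added to the sum. The $f$-contributions produced by the substitution cancel, by the defining property $Qf = Bf$, with the leading $\eps^{N+2}$-piece of $\mathcal{S}^\eps_N f$, so that $\mathcal{S}^\eps_{N+2} f$ has order at least $\eps^{N+4}$; the remaining higher-order $\tildeub$ and $\dtt\tildeub$ contributions form $\mathcal{R}^\eps_{N+2}\tildeub$, also of order at least $\eps^{N+4}$. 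Each substitution raises the minimal $\eps$-power and the maximal derivative order each by $2$, matching $r(\alpha, N+2, n) = r(\alpha, N, n) + 2$.

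The main obstacle is the bookkeeping: verifying that the coefficient of $\nabxn{N+4}\tildeub$ extracted at each level is exactly $(-1)^{N/2+1}\Cten^{N/2+1}$ requires collecting, with correct signs and symmetrizations, every tensor product $\Cten^\ell\otimes b^{2(N/2+1-\ell)}$ for $\ell=0,\ldots,N/2$ and recognizing them via the recursion \eqref{eq:definitionCr}. This identification is most transparent through the Neumann-series closed form for $\Cten^r$ as a sum over compositions $\{r_1,\ldots,r_k,s\}$ with $r_i\geq 1$, $s\geq 0$, $r_1+\cdots+r_k+s=r$, from which \eqref{eq:definitionCr} follows by grouping according to the first factor $b^{2r_1}$; the commutativity of constant tensors with $\nabxn{k}$ and the symmetry of $\nabxn{k}\tildeub$ make the algebra work out. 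A secondary technical point is verifying that the cancellation in $\mathcal{S}^\eps_N f$ is exact at each step, which ultimately traces back to the defining identity $Qf = Bf$: without the specific form of $Qf$ in \eqref{eq:defQf}, the $f$-remainder would not carry the improved $\eps^{N+2}$ factor.
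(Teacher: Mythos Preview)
Your approach is correct and genuinely cleaner than the paper's. The paper does not exploit the identity $Qf = Bf$ upfront; instead it substitutes the raw effective equation \eqref{eq:effectiveEquation_pr0_alpha} repeatedly into the $\dtt\tildeub$ terms, tracking the iterates through auxiliary tensors $A^r(j), B^r(j)$ indexed by the number $j$ of substitutions (Lemma~5.4), summing these to obtain an intermediate coefficient $\tilde c^r = \sum_{j=0}^r A^r(j)$ (Lemma~5.5), and only then verifying by a separate induction that $\tilde c^r = c^r$ (Lemma~5.6). The specific form of $Qf$ is \emph{derived} a posteriori in Remark~5.3 as the unique choice making $\mathcal{S}^\eps_N f$ of order $\eps^{N+2}$. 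Your route inverts this logic: recognizing $Qf = Bf$ immediately collapses the equation to $B(\dtt\tildeub - f) = A\tildeub$, after which the Neumann iteration $w = A\tildeub - B'w$ yields $c^r$ directly from the identity $B\cdot(\text{formal }c\text{-series}) = A$, which is exactly the recursion \eqref{eq:definitionCr}. This bypasses the $A^r(j), B^r(j)$ machinery and the separate lemma $\tilde c^r = c^r$ entirely.

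One point to tighten: your inductive step substitutes the level-$N$ identity into $\mathcal{R}^\eps_N\tildeub$, but since that identity itself contains $\mathcal{R}^\eps_N\tildeub$ (with its own $\dtt\tildeub$ terms), the resulting structure is awkward to track. It is cleaner to iterate the base relation $w = A\tildeub - B'w$ directly $N/2+1$ times, giving $w = \sum_{k=0}^{N/2}(-B')^k A\tildeub + (-B')^{N/2+1}w$; then $\mathcal{S}^\eps_N f = -(-B')^{N/2+1}f$ and $\mathcal{R}^\eps_N\tildeub$ collects the $\eps^{>N}$ tail of $\sum_k(-B')^kA\tildeub$ together with $(-B')^{N/2+1}\dtt\tildeub$. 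From this explicit form the estimates \eqref{eq:estimateR1eps} and the range $r(\alpha,N,n) = 2\lfloor\alpha/2\rfloor + N + n$ follow by inspection, and the telescoping to $c^{N/2+1}$ is automatic since for $r\le N/2$ all compositions contributing to $c^r$ already appear in $\sum_{k\le N/2}(-B')^kA$.
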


\begin{remark}	\label{rem:explanationQf2}
	We verify that the definition \eqref{eq:defQf} of $Qf$ ensures the remainder $\mathcal{S}^\eps_{\indk} f$ to have maximal order in terms of $\eps$ in the second estimate in \eqref{eq:estimateR1eps}.
	More specifically, among the right-hand side of the form $Qf = \sum_{s=0}^{\lfloor\alpha/2\rfloor} (-1)^s \eps^{2s} q^s \nabxn{2s} f$, $q^s\in\Ten^{2s}(\R^d)$, \eqref{eq:defQf} is the only one that ensures 
	$\norm{\mathcal{S}^\eps_{\indk}f} = \mathcal{O}(\eps^{N+2})$ for all even $\indk$ such that $0\leq \indk\leq 2\lfloor\alpha/2\rfloor$.
	This affirmation is proved in Remark \ref{rem:explanationQf3}.
\end{remark}

The result of Theorem \ref{thm:rewritedtttildeub_2_alpha} relies on inductive Boussinesq tricks.
Let us summarize this process.
We start from the expression of $\dtt\tildeub$ given by the effective equation \eqref{eq:effectiveEquation_alpha}, which we rewrite as
\begin{equation}	\label{eq:effectiveEquation_pr0_alpha}
\dtt\tildeub =
Qf +  \displaystyle\sum_{r=0}^{\lfloor\alpha/2\rfloor}(-1)^r\eps^{2r}
a^{2r}\nabxn {2r+2}\tildeub 
+ \displaystyle\sum_{r=1}^{\lfloor\alpha/2\rfloor}(-1)^r\eps^{2r}\big(-b^{2r}\big)\nabxn{2r}\dtt\tildeub,
\end{equation}
where $Qf$ is defined in \eqref{eq:defQf} (see Remark \ref{rem:explanationQf2}).
The result for $N=0$ trivially follows \eqref{eq:effectiveEquation_pr0_alpha} with $\mathcal{S}^\eps_{0} f= G^0(0)$ and $\mathcal{R}^\eps_{0} \tildeub=T^0(0)$ (defined in \eqref{eq:defT0_alpha} below).
Let us then assume that $\indk$ is an even number such that $2 \leq\indk\leq \alpha$.
In the right-hand side of \eqref{eq:effectiveEquation_pr0_alpha}, we inductively substitute $\dtt\tildeub$ (using \eqref{eq:effectiveEquation_pr0_alpha} itself)
in the terms of order $\mathcal{O}(\eps^{2})$ to $\mathcal{O}(\eps^{\indk})$.
To understand this technical process at best, let us define inductively some quantities and functions. 
Note that these definitions arise in the result of Lemma \ref{lem:bousstricklemma1}, which exhibits the decomposition process of one Boussinesq trick.

We define the tensors
\begin{equation}	\label{eq:defBri_alpha}
\begin{aligned}
&B^r(0) = -b^{2r} && 1\leq r\leq \lfloor\alpha/2\rfloor,\\
&B^r(j) = -\sum_{s=1}^{r-j}	b^{2s}\otimes B^{r-s}(j-1) 	 
&& j+1\leq r\leq \lfloor\alpha/2\rfloor,&&\quad 1\leq j\leq \lfloor\alpha/2\rfloor-1,
\end{aligned}
\end{equation}
and
\begin{equation}	\label{eq:defAri_alpha}
\begin{aligned}
&A^r(0) = a^{2r} && 0\leq r\leq \lfloor\alpha/2\rfloor,\\
&A^r(j) = \sum_{s=0}^{r-j}	a^{2s}\otimes B^{r-s}(j-1)	 
&& j\leq r\leq \lfloor\alpha/2\rfloor,&&\quad1\leq j\leq \lfloor\alpha/2\rfloor.
\end{aligned}
\end{equation}
Associated with the tensors $A^r(j)$ and $B^r(j)$, we define for the given $N$ the functions 
\begin{subequations}	\label{eq:defRSF_alpha}
	\begin{align}
	R^\indk(j) &= \sum_{r=j}^{\indk/2} (-1)^r \epsilon^{2r} A^r(j)\nabxn {2r+2}\tildeub
	&& 0\leq j \leq \indk/2,	\label{eq:defRSF1_alpha}\displaybreak[0]\\
	S^\indk(j) &= \sum_{r=j+1}^{\indk/2} (-1)^r \epsilon^{2r} B^r(j) \nabxn {2r}\dtt\tildeub
	&& 0\leq j \leq \indk/2-1,\label{eq:defRSF2_alpha}\displaybreak[0]\\
	\tilde{F}^\indk(j) &= \sum_{r=j+1}^{\indk/2} (-1)^r \epsilon^{2r} B^r(j)\nabxn {2r} (Qf)
	&& 0\leq j \leq \indk/2-1,\label{eq:defRSF3_alpha}\displaybreak[0]\\
	F^\indk(j) &= \sum_{r=j+1}^{\indk/2} (-1)^r \epsilon^{2r} \big(- B^r(j)\big) \nabxn {2r} f
	&& 0\leq j \leq \indk/2-1.\label{eq:defRSF4_alpha}
	\end{align}
\end{subequations}
With these notations, we verify that \eqref{eq:effectiveEquation_pr0_alpha} reads (recall the definition of $Qf$ in \eqref{eq:defQf})
\begin{equation}	\label{eq:effectiveEquation_ter_alpha}
\dtt\tildeub 
%	=f +  \displaystyle\sum_{r=0}^{\alpha/2}(-1)^r\eps^{2r}
%	a^{2r}\nabxn {2r+2}\tildeub 
%	+ \displaystyle\sum_{r=1}^{\alpha/2}(-1)^r\eps^{2r}\big(-b^{2r}\big)\nabxn{2r}\dtt\tildeub
= f + R^\indk(0) + S^\indk(0) + T^\indk(0) + F^\indk(0) + G^\indk(0),
\end{equation}
where the remainders are 
$T^\indk(0)=G^\indk(0)=0$ if $\indk=\alpha$ and 
\begin{equation}	\label{eq:defT0_alpha}
G^\indk(0) = \hspace{-7pt}
\sum_{r=\indk/2+1}^{\alpha/2} (-1)^r \epsilon^{2r} b^{2r} \nabxn{2r} f,
\quad
T^\indk(0) = \hspace{-7pt}
\sum_{r=\indk/2+1}^{\alpha/2} (-1)^r \epsilon^{2r} 
\Big(a^{2r}\nabxn {2r+2}\tildeub -b^{2r} \nabxn{2r} \dtt\tildeub\Big),
\end{equation}	
otherwise.

The proof is divided into three steps.
In the first step (Lemma \ref{lem:bousstricklemma1}), we apply one Boussinesq trick:
in $S^\indk(j)$, we use \eqref{eq:effectiveEquation_pr0_alpha} to substitute $\dtt\tildeub$ 
and decompose the result into terms of interest plus remainders.
In the second step (Lemma \ref{lem:bousstricklemma2}), the decomposition is used inductively to obtain a first version of Theorem \ref{thm:rewritedtttildeub_2_alpha}, which involves the tensor $\tilde\Cten^r$ instead of $\Cten^r$.
The final step is to prove that $\tilde\Cten^r$ and $\Cten^r$ are equal (Lemma \ref{lem:bousstricklemma3}).

\begin{lemma}	\label{lem:bousstricklemma1}
	The functions defined in \eqref{eq:defRSF_alpha} satisfy the relations
	(\eqref{eq:rewritedtttildeub_1a_alpha} and \eqref{eq:rewritedtttildeub_1c_alpha} hold only if $N\geq4$)
	\begin{subequations}\label{eq:rewritedtttildeub_1_alpha}
		\begin{align}		
		S^\indk(j) =\,& R^\indk(j+1) + S^\indk(j+1) + T^\indk(j+1) + \tilde{F}^\indk(j)
		&&0\leq j\leq \indk/2-2,
		\label{eq:rewritedtttildeub_1a_alpha}\\
		\tilde{F}^\indk(j) =\,& F^\indk(j+1)-F^\indk(j) + G^\indk(j+1)
		&&0\leq j\leq \indk/2-2,
		\label{eq:rewritedtttildeub_1c_alpha}\\
		S^\indk(\indk/2-1)
		=\,& R^\indk(\indk/2) + T^\indk(\indk/2) + \tilde{F}^\indk(\indk/2-1),
		\label{eq:rewritedtttildeub_1b_alpha}\\
		\tilde{F}^\indk(\indk/2-1) =\,& -F^\indk(\indk/2-1)+G^\indk(\indk/2),
		\label{eq:rewritedtttildeub_1d_alpha}
		\end{align}
	\end{subequations}
	where the remainders 
	$\{T^\indk(j)\}_{j=1}^{N/2}$ and $\{G^\indk(j)\}_{j=1}^{N/2}$ 
	are defined in \eqref{eq:defT_alpha}, \eqref{eq:defGN_1}, \eqref{eq:defTlast_alpha}, and \eqref{eq:defGlast}.
\end{lemma}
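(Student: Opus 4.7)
\begin{proofof}{Lemma \ref{lem:bousstricklemma1}}
The plan is to establish \eqref{eq:rewritedtttildeub_1a_alpha} by substituting the expression \eqref{eq:effectiveEquation_pr0_alpha} for $\dtt\tildeub$ inside the definition \eqref{eq:defRSF2_alpha} of $S^\indk(j)$, then to reorganize the resulting double sums by a change of index of the form $\rho=r+r'$. The substitution produces three families of contributions:
\[
S^\indk(j)=\underbrace{\sum_{r=j+1}^{N/2}(-1)^r\epsilon^{2r}B^r(j)\nabxn{2r}(Qf)}_{=\tilde F^\indk(j)}
+\Sigma_a+\Sigma_b,
\]
where $\Sigma_a$ (resp. $\Sigma_b$) is the double sum obtained by substituting the $a^{2r'}\nabxn{2r'+2}\tildeub$ part (resp. the $-b^{2r'}\nabxn{2r'}\dtt\tildeub$ part) of \eqref{eq:effectiveEquation_pr0_alpha}. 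The first contribution is exactly $\tilde F^\indk(j)$, providing the last term in \eqref{eq:rewritedtttildeub_1a_alpha}.

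The key step is the re-indexing in $\Sigma_a$ and $\Sigma_b$. Setting $\rho=r+r'$ with $s=r'$ (so $r=\rho-s$), the inner sums collapse precisely to the recursive definitions \eqref{eq:defAri_alpha} and \eqref{eq:defBri_alpha}: for $\rho\leq N/2$, the range $s\in\{0,\dots,\rho-j-1\}$ in $\Sigma_a$ yields exactly $A^\rho(j+1)$ multiplying $\nabxn{2\rho+2}\tildeub$, and the range $s\in\{1,\dots,\rho-j-1\}$ in $\Sigma_b$ yields exactly $-B^\rho(j+1)\cdot(-1)=B^\rho(j+1)$ multiplying $\nabxn{2\rho}\dtt\tildeub$. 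Summing over $\rho$ from $j+1$ (resp. $j+2$) to $N/2$ recovers $R^\indk(j+1)$ and $S^\indk(j+1)$ by \eqref{eq:defRSF1_alpha}--\eqref{eq:defRSF2_alpha}. The remaining terms with $\rho>N/2$ (which arise because $r'$ can range up to $\lfloor\alpha/2\rfloor$) are by definition collected into the remainder $T^\indk(j+1)$. This proves \eqref{eq:rewritedtttildeub_1a_alpha} for $0\leq j\leq N/2-2$; the boundary identity \eqref{eq:rewritedtttildeub_1b_alpha} is obtained by the same manipulation observing that $S^\indk(N/2)$ is an empty sum, so only $R^\indk(N/2)+T^\indk(N/2)+\tilde F^\indk(N/2-1)$ survives.

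The identities \eqref{eq:rewritedtttildeub_1c_alpha} and \eqref{eq:rewritedtttildeub_1d_alpha} are then derived by the same technique applied to $\tilde F^\indk(j)$: substituting the definition \eqref{eq:defQf} of $Qf$ gives
\[
\tilde F^\indk(j)=\underbrace{\sum_{r=j+1}^{N/2}(-1)^r\epsilon^{2r}B^r(j)\nabxn{2r}f}_{=-F^\indk(j)}
+\sum_{r=j+1}^{N/2}\sum_{r'=1}^{\lfloor\alpha/2\rfloor}(-1)^{r+r'}\epsilon^{2(r+r')}B^r(j)\otimes b^{2r'}\nabxn{2r+2r'}f.
\]
Re-indexing the double sum with $\rho=r+r'$ and using the recursion \eqref{eq:defBri_alpha} for $B^\rho(j+1)$ exactly as above splits it into the part $\rho\leq N/2$, which matches $F^\indk(j+1)$ by \eqref{eq:defRSF4_alpha}, and the overflow part $\rho>N/2$, which by definition constitutes $G^\indk(j+1)$. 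For the boundary case $j=N/2-1$, the second double sum has no contribution with $\rho\leq N/2$ (since $s\geq 1$ and $r\geq N/2$ force $\rho\geq N/2+1$), so it reduces to $G^\indk(N/2)$, yielding \eqref{eq:rewritedtttildeub_1d_alpha}.

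The main obstacle is purely bookkeeping: verifying that the two recursive definitions in \eqref{eq:defAri_alpha}--\eqref{eq:defBri_alpha} are precisely tailored to collapse the double sums after the change of variable $\rho=r+r'$. Once the alignment between the summation ranges in $\Sigma_a,\Sigma_b$ and the recursions for $A^\rho(j+1),B^\rho(j+1)$ is made explicit, the identification of $R^\indk(j+1),S^\indk(j+1),F^\indk(j+1)$ as well as the collection of the remainders $T^\indk(j+1),G^\indk(j+1)$ follows by inspection, and no additional analytical argument is needed.
\end{proofof}
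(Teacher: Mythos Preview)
Your proposal is correct and follows essentially the same route as the paper: substitute \eqref{eq:effectiveEquation_pr0_alpha} into $S^{\indk}(j)$, re-index the resulting double sums via $\rho=r+r'$, recognize the recursions \eqref{eq:defBri_alpha}--\eqref{eq:defAri_alpha} in the inner sums to recover $R^{\indk}(j+1)$ and $S^{\indk}(j+1)$, and collect the overflow $\rho>\indk/2$ into the remainders $T^{\indk}(j+1)$, $G^{\indk}(j+1)$. The paper carries out the same computation with a more explicit bookkeeping of the index sets (introducing $I^{\ell}_m(j)$, $J^{\ell}_m(j)$, $H^{\ell}_m(j)$), but the argument is identical in substance.
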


\begin{lemma}	\label{lem:bousstricklemma2}
	%	Assume that	$\tildeub$ satisfies the regularity \eqref{eq:regularityU_bousstrick}.
	The effective solution of \eqref{eq:effectiveEquation_alpha} $\tildeub$ satisfies the equality
	\begin{equation}	\label{eq:bousstricklemma2}
	\dtt\tildeub = f + \sum_{r=0}^{\indk/2} \epsilon^{2r} (-1)^r\tilde\Cten^{r} \nabxn {2r+2}\tildeub 
	+ \mathcal{S}^\eps_{\indk} f
	+ \mathcal{R}^\eps_{\indk} \tildeub,
	\end{equation}
	where $\tilde\Cten^{r}$ is the tensor defined as 
	\begin{equation}	\label{eq:definitionCr_alpha}
	\tilde\Cten^{0} = a^0,
	\qquad
	\tilde\Cten^{r} = a^{2r} + \sum_{j=1}^{r} \sum_{s=0}^{r-j} a^{2s}\otimes  B^{r-s}(j-1) 
	\quad 1\leq r\leq \lfloor\alpha/2\rfloor,
	\end{equation}
	%	\begin{equation}	\label{eq:defSeps1f}
	%	\mathcal{S}^\eps_{0} f=f,
	%	\qquad
	%		\mathcal{S}^\eps_{\indk} f = f + \sum_{r=1}^{\indk/2} (-1)^r \eps^{2r} \bigg( \sum_{j=0}^{r-1} B^r(j) \bigg) \nabxn {2r} f
	%		\quad 2\leq N \leq \alpha,
	%	\end{equation}
	and the remainders $\mathcal{R}^\eps_{\indk}\tildeub$ and $\mathcal{S}^\eps_{\indk} f$ are defined in \eqref{eq:defReps0}
	and satisfy the estimates \eqref{eq:estimateR1eps}.
\end{lemma}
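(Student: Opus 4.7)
The plan is to iterate the four decompositions of Lemma \ref{lem:bousstricklemma1}, starting from identity \eqref{eq:effectiveEquation_ter_alpha}, to collect the resulting $R^N(j)$ contributions into the announced main sum, and to absorb the rest into the two remainders. The case $N=0$ is trivial, since \eqref{eq:effectiveEquation_ter_alpha} is already of the required form with $\tilde\Cten^{0}=a^{0}$, $\mathcal{R}^\eps_{0}\tildeub=T^{0}(0)$ and $\mathcal{S}^\eps_{0}f=G^{0}(0)$ (the sums $S^0(0)$ and $F^0(0)$ are empty); I focus on the case $N\geq 2$.

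The first step is to handle the $S^{N}(\cdot)$ chain. A straightforward induction on $j$ using \eqref{eq:rewritedtttildeub_1a_alpha} for $j=0,\dots,N/2-2$, closed with \eqref{eq:rewritedtttildeub_1b_alpha} at $j=N/2-1$, gives
\[
S^{N}(0) \;=\; \sum_{j=1}^{N/2} R^{N}(j)\;+\;\sum_{j=1}^{N/2} T^{N}(j)\;+\;\sum_{j=0}^{N/2-1}\tilde{F}^{N}(j).
\]
Plugging this back into \eqref{eq:effectiveEquation_ter_alpha} and swapping the order of summation in $\sum_{j=0}^{N/2}R^{N}(j)$ via \eqref{eq:defRSF1_alpha}, the principal part becomes
\[
\sum_{j=0}^{N/2} R^{N}(j) \;=\; \sum_{r=0}^{N/2}(-1)^{r}\eps^{2r}\Bigl(\sum_{j=0}^{r}A^{r}(j)\Bigr)\nabxn{2r+2}\tildeub.
\]
The definition \eqref{eq:defAri_alpha} combined with $A^{r}(0)=a^{2r}$ yields $\sum_{j=0}^{r}A^{r}(j)=a^{2r}+\sum_{j=1}^{r}\sum_{s=0}^{r-j}a^{2s}\otimes B^{r-s}(j-1)=\tilde\Cten^{r}$, exactly as in \eqref{eq:definitionCr_alpha}, producing the principal sum of \eqref{eq:bousstricklemma2}.

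The second step is to telescope the $\tilde F^{N}$ sum. Applying \eqref{eq:rewritedtttildeub_1c_alpha} for $0\leq j\leq N/2-2$ and \eqref{eq:rewritedtttildeub_1d_alpha} at $j=N/2-1$, the $F^N$ terms cancel and
\[
F^{N}(0)+\sum_{j=0}^{N/2-1}\tilde{F}^{N}(j)\;=\;\sum_{j=1}^{N/2} G^{N}(j),
\]
so setting $\mathcal{S}^\eps_{N}f\coloneqq\sum_{j=0}^{N/2}G^{N}(j)$ and $\mathcal{R}^\eps_{N}\tildeub\coloneqq\sum_{j=0}^{N/2}T^{N}(j)$ yields identity \eqref{eq:bousstricklemma2}. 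The bounds \eqref{eq:estimateR1eps} are then mechanical: the explicit formulas \eqref{eq:defT0_alpha}--\eqref{eq:defGlast} show that each summand of $T^{N}(j)$ and $G^{N}(j)$ carries a factor $\eps^{2r}$ with $r\geq N/2+1$ together with a tensor built from $a^{0},\{a^{2r},b^{2r}\}_{r=1}^{\lfloor\alpha/2\rfloor}$; differentiating $n$ times in $x$ and taking the indicated norms yields the claimed estimates with maximal derivative order $r(\alpha,N,n)=2\lfloor\alpha/2\rfloor+N+n$.

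The main obstacle is not any single step but the sheer index bookkeeping: one must verify carefully that the double sum in the identification of $\tilde\Cten^{r}$ really matches \eqref{eq:definitionCr_alpha} (this requires unpacking the inductive definition \eqref{eq:defBri_alpha} of $B^{r}(j)$), and that the telescoping in the second step closes correctly at the boundary index $j=N/2-1$ given the asymmetric form of \eqref{eq:rewritedtttildeub_1b_alpha} and \eqml{eq:rewritedtttildeub_1d_alpha}. Off-by-one errors are the principal danger, so checking the small cases $N=2$ and $N=4$ explicitly is a sensible sanity test before committing to the general induction.
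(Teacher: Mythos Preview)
Your proposal is correct and follows essentially the same route as the paper's proof: iterate the decompositions of Lemma~\ref{lem:bousstricklemma1} from \eqref{eq:effectiveEquation_ter_alpha}, swap the order of summation in $\sum_j R^N(j)$ to identify $\tilde\Cten^{r}$, and absorb the $T^N(j)$, $G^N(j)$ into the remainders \eqref{eq:defReps0}. Your treatment of the $\tilde F^N$ telescoping is actually more explicit than the paper's, which simply asserts the final identity \eqref{eq:bousstricklemma2_pr1} after ``inductively'' applying \eqref{eq:rewritedtttildeub_1_alpha}; the boundary check at $j=N/2-1$ that you flag is exactly the point the paper leaves to the reader.
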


\begin{lemma}	\label{lem:bousstricklemma3}
	For $1\leq r\leq \lfloor\alpha/2\rfloor$, the tensor $\tilde\Cten^r$, defined in \eqref{eq:definitionCr_alpha}, equals the tensor $\Cten^r$, defined in \eqref{eq:definitionCr}.	
\end{lemma}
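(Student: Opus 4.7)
The plan is to prove $\tilde{\Cten}^r = \Cten^r$ by strong induction on $r$. The base case $r=0$ is immediate since both tensors equal $a^0$ by definition. For the inductive step, it suffices to show that $\tilde{\Cten}^r$ satisfies the same recursion \eqref{eq:definitionCr} that characterizes $\Cten^r$, namely
\[
\tilde{\Cten}^r = a^{2r} - \sum_{\ell=0}^{r-1} \tilde{\Cten}^\ell \otimes b^{2(r-\ell)},
\]
because the inductive hypothesis $\tilde{\Cten}^\ell = \Cten^\ell$ for $\ell<r$ then forces equality at level $r$.

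The key technical device is a right-sided analogue of the left-sided recursion \eqref{eq:defBri_alpha} for $B^r(j)$, namely
\[
B^r(j) = -\sum_{s=1}^{r-j} B^{r-s}(j-1) \otimes b^{2s} \qquad (j\geq 1).
\]
I would establish this by iterating \eqref{eq:defBri_alpha} down to the base case $B^{\cdot}(0) = -b^{2\cdot}$, which yields the closed form
\[
B^r(j) = (-1)^{j+1} \sum_{\substack{t_1+\cdots+t_{j+1}=r\\ t_i\geq 1}} b^{2t_1}\otimes b^{2t_2}\otimes\cdots\otimes b^{2t_{j+1}},
\]
i.e.\ a sum over all compositions of $r$ into $j+1$ positive parts. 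Because this closed form is symmetric under reversal of the indexing of the factors, the right-sided recursion follows.

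With this tool in hand I would return to the sum defining $\tilde{\Cten}^r - a^{2r}$ in \eqref{eq:definitionCr_alpha}. The $j=1$ contribution is simply $-\sum_{s=0}^{r-1} a^{2s}\otimes b^{2(r-s)}$, since $B^{r-s}(0) = -b^{2(r-s)}$. For each $j\geq 2$ I would substitute the right-sided recursion inside $B^{r-s}(j-1)$ to split off a trailing factor $b^{2t}$, then reindex via $\ell = r-t$ so that this trailing factor becomes $b^{2(r-\ell)}$. After swapping the order of summation to group terms by $\ell$ first, the remaining double sum over $(s,j')$ with $j'=j-1$ is exactly the sum defining $\tilde{\Cten}^\ell - a^{2\ell}$. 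Combining with the $j=1$ term, the stray $a^{2\ell}$ contributions telescope, and one recovers precisely $-\sum_{\ell=0}^{r-1}\tilde{\Cten}^\ell\otimes b^{2(r-\ell)}$, completing the inductive step.

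The main obstacle is the left–right asymmetry of the two recursions: \eqref{eq:defBri_alpha} peels a factor on the left of $B^{r-s}(j-1)$, while the recursion for $\Cten^r$ places $b^{2(r-\ell)}$ on the right of $\Cten^\ell$, so a naive substitution does not align the tensor factors. Observing the symmetric closed form of $B^r(j)$ as a sum over compositions resolves this asymmetry and makes the rearrangement of summation indices essentially bookkeeping.
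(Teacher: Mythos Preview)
Your proof is correct and follows essentially the same inductive route as the paper's: both arguments reduce to reindexing the triple sum and collapsing it via the recursion for $B^r(j)$, the only cosmetic difference being that you show $\tilde\Cten^r$ satisfies the defining recursion of $\Cten^r$, while the paper starts from $\Cten^r$ and manipulates it into the explicit formula \eqref{eq:definitionCr_alpha} for $\tilde\Cten^r$. Your explicit justification of the right-sided recursion $B^r(j) = -\sum_{s=1}^{r-j} B^{r-s}(j-1)\otimes b^{2s}$ via the symmetric closed form over compositions of $r$ is in fact more careful than the paper, which uses this identity by citing ``the definition of $B^r(j)$ in \eqref{eq:defBri_alpha}'' even though \eqref{eq:defBri_alpha} is the left-sided recursion and the equality of the two requires exactly the reversal symmetry you spell out.
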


\begin{remark}	\label{rem:explanationQf3}
	Let us explain how the definition of $Qf$ is obtained.
	Assuming that $Qf$ is unknown, \eqref{eq:effectiveEquation_ter_alpha} reads
	$\dt\tildeub = Qf + R^N(0)+ S^N(0)+ T^N(0)$.
	Using inductively \eqref{eq:rewritedtttildeub_1a_alpha} and \eqref{eq:rewritedtttildeub_1b_alpha}, which hold independently of $Qf$, we obtain \eqref{eq:bousstricklemma2} with $\mathcal{S}^\eps_{\indk} f$ defined as
	\[
	\mathcal{S}^\eps_{\indk} f 
	= -f + Qf + \sum_{j=0}^{N/2-1}\sum_{r=j+1}^{N/2-1} (-1)^r \eps^{2r} B^r(j) \nabxn{2r}(Qf).
	\]
	We want to build $Qf$ so that $\mathcal{S}^\eps_{\indk} f$ has order $\mathcal{O}(\eps^{N+2})$ for all even $0\leq N\leq 2\lfloor\alpha/2\rfloor$.
	Inserting the ansatz $Qf = \sum_{s=0}^{\lfloor\alpha/2\rfloor} (-1)^s \eps^{2s} q^s \nabxn{2s} f$, where $q^s\in\Ten^{2s}(\R^d)$ are unknown, we obtain (after some work)
	\[
	\mathcal{S}^\eps_{\indk} f 
	= (q^0-1)f
	+  \sum_{m=1}^{N/2} (-1)^m \eps^{2m} 
	\Big( q^m + \sum_{s=0}^{m-1} \sum_{j=0}^{m-s-1} q^s \otimes B^{m-s}(j) \Big)
	\nabxn{2m}f
	+ \mathcal{O}(\eps^{N+2}).
	\]
	Canceling the terms of order $\mathcal{O}(1)$ to $\mathcal{O}(\eps^{N})$, we obtain $q^0=1$ and an inductive definition for $q^m$, $m\geq1$.
	We then verify by induction that $q^m = b^{2m}$ for $1\leq m\leq \lfloor\alpha/2\rfloor$,
	i.e., $Qf$ is defined as \eqref{eq:defQf}.
\end{remark}

\begin{proofof}{Lemma \ref{lem:bousstricklemma1}}
	To slightly simplify the notation, Let us denote $\bar\alpha= 2\lfloor\alpha/2\rfloor$.
	We first prove \eqref{eq:rewritedtttildeub_1a_alpha}.
	Using \eqref{eq:effectiveEquation_pr0_alpha}, we substitute $\dtt\tildeub$ in $S^\indk(j)$ and obtain
	\begin{equation}	\label{eq:bousstricklemma1_pr1}
	\begin{aligned}
	S^\indk(j)
	= 
	\phantom{ + } &\sum_{r=j+1}^{\indk/2} \sum_{ s=0}^{\indk/2}(-1)^{ r+ s}\epsilon^{2( r+ s)} \big( a^{2 s}\otimes B^{ r}(j) \big) \nabxn {2( r+ s)+2}\tildeub
	+ \tilde{F}^\indk(j)\\
	+ &	\sum_{ r=j+1}^{\indk/2} \sum_{ s=1}^{\indk/2}(-1)^{ r+ s}\epsilon^{2( r+ s)} \big( - b^{2 s} \otimes B^{ r}(j) \big) \nabxn {2( r+ s)}\dtt \tildeub
	+ T^\indk_1(j+1),
	\end{aligned}
	\end{equation}
	where the remainder is $T^\indk_1(j+1) =0$ if $\indk=\bar\alpha$ and
	\begin{equation*}%	\label{eq:bousstricklemma1_pr15}
	T^\indk_1(j+1) = 
	\sum_{r=j+1}^{\indk/2} \sum_{ s=\indk/2+1}^{\bar\alpha/2}
	(-1)^{r+s}\eps^{2(r+s)}
	\Big( \big (a^{2 s}\otimes B^{ r}(j)\big) \nabxn {2( r+ s)+2}\tildeub
	- \big( b^{2 s} \otimes B^{ r}(j) \big) \nabxn {2( r+ s)}\dtt \tildeub
	\Big),
	\end{equation*}
	otherwise.
	In order to decompose the terms in \eqref{eq:bousstricklemma1_pr1}, we study the two index sets (denoting $\ell=\indk/2$)
	\begin{equation*}
	\begin{aligned}
	I^{\ell}_m(j) &= \{ (r,s)\,:\,j+1\leq r\leq {\ell},\; 0\leq s\leq {\ell} \;:\; r+s=m\},
	\\
	J^{\ell}_m(j) &= \{ (r,s)\,:\,j+1\leq r\leq {\ell},\; 1\leq s\leq {\ell} \;:\; r+s=m\},
	\end{aligned}
	\end{equation*}
	where we recall that $0\leq j\leq \ell-2$.
	We verify that these sets can be written as 
	\begin{equation*}
	\begin{aligned}
	&I^\ell_m(j) = 
	\left\{
	\begin{array}{ll}
	\big\{ (r,s)\,:\, 0\leq s \leq m-(j+1),~r=m-s\big\}
	& j+1\leq m\leq \ell,\\[2pt]
	H^\ell_m(j) :=\big\{(r,s)\,:\,	\max\{j+1, m-\ell\}\leq r\leq \ell,~s=m-r	\big\}
	& \ell+1\leq m\leq 2\ell,
	\end{array} 
	\right.
	\\
	&J^\ell_m(j) = 
	\left\{
	\begin{array}{ll}
	\big\{(r,s)\,:\, 1\leq s \leq m-(j+1),~r=m-s\big\}
	& j+2\leq m\leq \ell,\\[2pt]
	H^\ell_m(j) 
	& \ell+1\leq m\leq 2\ell.
	\end{array}
	\right.
	\end{aligned}
	\end{equation*}
	Hence, we rewrite \eqref{eq:bousstricklemma1_pr1} as 
	\begin{equation}	\label{eq:bousstricklemma1_pr2}
	\begin{aligned}
	S^\indk(j)
	= 
	&\phantom{ + } \sum_{m = j+1}^{\indk/2}(-1)^{m} \epsilon^{2m} \Big( \sum_{s = 0}^{m-(j+1)}a^{2 s}\otimes B^{ m-s}(j) \Big) \nabxn {2m+2}\tildeub
	~~+ \tilde{F}^\indk(j)\\
	& + \sum_{m = j+2}^{\indk/2}(-1)^{m} \epsilon^{2m} \Big( -\hspace{-7pt}\sum_{s = 1}^{m-(j+1)}b^{2 s}\otimes B^{ m-s}(j) \Big) \nabxn {2m}\dtt \tildeub
	~~+ T^\indk(j+1) ,
	\end{aligned}
	\end{equation}
	where the remainder is
	\begin{equation}	\label{eq:defT_alpha}
	\begin{aligned}
	T^\indk(j+1) = T^\indk_1(j+1) &+
	\sum_{m=\indk/2+1}^{\indk} (-1)^m \epsilon^{2m}
	\Big(\sum_{r,s\in H^{\indk/2}_m(j)} a^{2s}\otimes B^r(j)	\Big)
	\nabxn {2m+2}\tildeub
	\\& + 
	\sum_{m=\indk/2+1}^{\indk} (-1)^m \epsilon^{2m}
	\Big(-\hspace{-12pt}\sum_{r,s\in H^{\indk/2}_m(j)} 	b^{2s}\otimes B^r(j)	\Big)
	\nabxn {2m}\dtt\tildeub
	.
	\end{aligned}
	\end{equation}
	%	with $T^\indk_1(j+1)$ defined in \eqref{eq:bousstricklemma1_pr15}.
	Using the definitions of $A^r(j)$ in \eqref{eq:defAri_alpha} and $B^r(j)$ in \eqref{eq:defBri_alpha}, we verify that the two sums in \eqref{eq:bousstricklemma1_pr2} match $R^\indk(j+1)$ and $S^\indk(j+1)$, respectively.
	That proves \eqref{eq:rewritedtttildeub_1a_alpha}.
	
	Next, let us prove \eqref{eq:rewritedtttildeub_1c_alpha}.
	Using the definition of $Qf$ in \eqref{eq:defQf} and recalling \eqref{eq:defRSF3_alpha} and \eqref{eq:defRSF4_alpha}, we have
	\begin{equation*}
	\tilde{F}^\indk(j) 
	%	= \sum_{ r=j+1}^{\indk/2} (-1)^{ r}\epsilon^{2r}
	%	B^{r}(j) \nabxn {2r} (Qf),
	= -F^\indk(j) 
	+ \sum_{r=j+1}^{\indk/2} \sum_{ s=1}^{\bar\alpha/2}(-1)^{ r+ s}\epsilon^{2( r+ s)} \big( b^{2 s}\otimes B^{r}(j) \big)
	\nabxn {2( r+ s)} f,
	\end{equation*}
	As done above, we decompose the sum and use the expression of $J^{\ell}_m(j)$ to obtain
	\begin{equation*}
	\tilde{F}^\indk(j) 
	= -F^\indk(j) +
	\sum_{m = j+2}^{\indk/2}(-1)^{m} \epsilon^{2m} 
	\Big( \sum_{s = 1}^{m-(j+1)}	b^{2 s}\otimes B^{ m-s}(j) \Big)
	\nabxn {2m}f + G^N(j+1),
	\end{equation*}
	where the remainder is
	\begin{equation}	\label{eq:defGN_1}
	\begin{aligned}
	G^\indk(j+1)
	= &\sum_{r=j+1}^{\indk/2} \sum_{ s=\indk/2+1}^{\bar\alpha/2}
	(-1)^{r+s}\eps^{2(r+s)} \big(b^{2 s}\otimes B^{ m-s}(j)\big)\nabxn {2m}f
	\\&+ \sum_{m = \indk/2+1}^{\indk/2}	
	(-1)^{m} \epsilon^{2m} \Big(\hspace{-10pt}\sum_{r,s\in H^{\indk/2}_m(j)} 	b^{2s}\otimes B^r(j)	\Big)
	\nabxn {2m}f.
	\end{aligned}
	\end{equation}
	Using the definitions of $B^{m}(j+1)$ in \eqref{eq:defBri_alpha} and $F^N(j+1)$ in \eqref{eq:defRSF3_alpha}, we obtain \eqref{eq:rewritedtttildeub_1c_alpha}. 
	
	Next, we prove \eqref{eq:rewritedtttildeub_1b_alpha}. 
	From the definition of $S^\indk(j)$ in \eqref{eq:defRSF2_alpha}, we use \eqref{eq:effectiveEquation_pr0_alpha} to get
	\begin{equation}	\label{eq:bousstricklemma1_pr3}
	S^\indk(\indk/2-1)
	= 	
	(-1)^{\indk/2} \epsilon^{\indk} \big( a^0 \otimes B^{\indk/2}(\indk/2-1) \big) \nabxn {\indk+2}\tildeub 
	+T^\indk(\indk/2)
	+\tilde F^\indk(\indk/2-1)
	,
	\end{equation} 
	where the remainder is 
	\begin{equation}	\label{eq:defTlast_alpha}
	%		\begin{aligned}
	T^\indk(\indk/2) =
	\sum_{r=1}^{\bar\alpha/2} (-1)^{\indk/2+r} \epsilon^{\indk+2r}
	\Big(
	\big(a^{2r}\otimes B^{\indk/2}(\indk/2-1) \big)
	\nabxn {\indk +2r+2}\tildeub
	-\big( b^{2r}\otimes B^{\indk/2}(\indk/2-1)\big)
	\nabxn {\indk +2r}\dtt\tildeub
	\Big),
	%		\end{aligned}
	\end{equation}
	The first term of the right-hand side of \eqref{eq:bousstricklemma1_pr3} matches the definition of $R^\indk(\bar\alpha/2)$ in \eqref{eq:defRSF1_alpha}.
	That proves \eqref{eq:rewritedtttildeub_1b_alpha}.
	
	Finally, let us prove \eqref{eq:rewritedtttildeub_1d_alpha}.
	From the definition of $\tilde{F}^\indk(j)$, we verify that \eqref{eq:rewritedtttildeub_1d_alpha} holds
	with $G^\indk(\indk/2)$ defined as
	\begin{equation}	\label{eq:defGlast}
	G^\indk(\indk/2) 
	= 	\sum_{r=1}^{\bar\alpha/2} \eps^{N+2r}\Big( b^{2r}\otimess B^{N/2}(N/2-1)\Big) \nabxn{N+2r} f. 
	\end{equation}
	That concludes the proof of Lemma \ref{lem:bousstricklemma1}.
\end{proofof}

\begin{proofof}{Lemma \ref{lem:bousstricklemma2}}
	Starting from \eqref{eq:effectiveEquation_ter_alpha}, we use inductively the decompositions in \eqref{eq:rewritedtttildeub_1_alpha} and obtain
	\begin{equation}	\label{eq:bousstricklemma2_pr1}
	\dtt\tildeub 
	%= f + R^\indk(0) + S^\indk(0) + T^\indk(0)+ F^\indk(0) + G^N(0)
	= f + \sum_{j=0}^{\indk/2} R^\indk(j)
	+ \sum_{j=0}^{\indk/2} T^\indk(j) 
	+ \sum_{j=0}^{\indk/2} G^\indk(j).
	\end{equation}
	%First, we verify that $\mathcal{S}^\eps_{\indk} f$, defined in \eqref{eq:defSeps1f}, satisfies
	%$\mathcal{S}^\eps_{\indk} f = \sum_{j=0}^{\indk/2} F^\indk(j)$.
	We define the remainders as  
	\begin{equation}	\label{eq:defReps0}
	\mathcal{R}^\eps_{\indk}\tildeub = \sum_{j=0}^{\indk/2} T^\indk(j),
	\qquad
	\mathcal{S}^\eps_{\indk}f = \sum_{j=0}^{\indk/2} G^\indk(j),
	\end{equation}
	where $\{T^\indk(j)\}_{j=0}^{\indk/2}$ and $\{G^\indk(j)\}_{j=0}^{\indk/2}$ are defined in 
	\eqref{eq:defT0_alpha},
	\eqref{eq:defT_alpha},
	\eqref{eq:defGN_1},
	\eqref{eq:defTlast_alpha},
	and \eqref{eq:defGlast}.
	From the definitions of $T^\indk(j)$ and $G^\indk(j)$, we verify that $\mathcal{R}^\eps_{\indk}\tildeub$ and $\mathcal{S}^\eps_{\indk}f$ satisfy the estimates in \eqref{eq:estimateR1eps}.
	Next, we have to develop $\sum_{j=0}^{\indk/2} R^\indk(j)$ in \eqref{eq:bousstricklemma2_pr1}.
	Using the definition of $R^\indk(j)$ in \eqref{eq:defRSF1_alpha} and exchanging the sums, we find that
	\begin{equation}	\label{eq:bousstricklemma2_pr2}
	\sum_{j=0}^{\indk/2} R^\indk(j)
	= \sum_{j=0}^{\indk/2} \sum_{r=j}^{\indk/2} (-1)^r \eps^{2r} A^{r}(j) \nabxn {2r+2}\tildeub
	= \sum_{r=0}^{\indk/2}  \eps^{2r} (-1)^r \bigg( \sum_{j=0}^r A^{r}(j) \bigg) \nabxn {2r+2}\tildeub.
	\end{equation}
	Using the definition of $A^r(j)$ in \eqref{eq:defAri_alpha}, we verify that 
	for $r=0$, $\sum_{j=0}^r A^{r}(j)=a^0=\tilde\Cten^0$, and for $1\leq r\leq \alpha/2$,
	\begin{equation}	\label{eq:bousstricklemma2_pr3}
	\sum_{j=0}^r A^{r}(j)
	= A^{r}(0)  + \sum_{j=1}^r A^{r}(j)
	= a^{2r} +  \sum_{j=1}^r\sum_{s=0}^{r-j} a^{2s}	\otimes B^{r-s}(j-1)
	= \tilde\Cten^r.
	\end{equation}	
	Combining \eqref{eq:bousstricklemma2_pr1}, \eqref{eq:defReps0}, \eqref{eq:bousstricklemma2_pr2}, and \eqref{eq:bousstricklemma2_pr3},
	we obtain \eqref{eq:bousstricklemma2} and the proof of Lemma \ref{lem:bousstricklemma2} is complete.
\end{proofof}

\begin{proofof}{Lemma \ref{lem:bousstricklemma3}}
	We prove by induction on $r$ that the tensor $\tilde\Cten^r$, defined in \eqref{eq:definitionCr_alpha},
	equals the tensor $\Cten^r$, defined in \eqref{eq:definitionCr}.
	The base case is trivially verified as $\tilde\Cten^0 = a^0 = \Cten^0$.  
	Then, for $r\geq 2$, we assume that $\tilde\Cten^s=\Cten^s$ for $1\leq s \leq r-1$
	and we have to verify that the tensor
	\begin{equation*}
	\Cten^r = a^{2r} - \sum_{\ell=0}^{r-1} \Cten^\ell\otimess b^{2(r-\ell)} 
	=a^{2r} - \Cten^0\otimess b^{2r}  - \sum_{\ell=1}^{r-1} \Cten^\ell\otimess b^{2(r-\ell)} ,
	\end{equation*}
	equals $\tilde\Cten^r$, defined in \eqref{eq:definitionCr_alpha}.
	Using the induction assumption and \eqref{eq:definitionCr_alpha}, we have
	\begin{equation}	\label{eq:proofdefCr_bis_alpha}
	\Cten^r
	=  a^{2r} - \sum_{\ell=0}^{r-1} a^{2\ell} \otimess b^{2(r-\ell)}
	- \sum_{\ell=1}^{r-1} \sum_{j=1}^\ell \sum_{s=0}^{\ell-j}
	a^{2s}\otimess  B^{\ell-s}(j-1) \otimess b^{2(r-\ell)}   .
	%	= a^{2r} + \sum_{k=0}^{r-1} B^{r-k}(0)\otimess a^{2k} - S,
	\end{equation}
	Let us denote the triple sum $T$ and its summand 
	$x^r_{\ell,j,s} = a^{2s}\otimess  B^{\ell-s}(j-1) \otimess b^{2(r-\ell)}$.
	Applying the change of indices $m=r-\ell$ and exchange the sums twice to get
	\begin{equation*}
	T 	= \sum_{m=1}^{r-1} \sum_{j=1}^{r-m} \sum_{s=0}^{r-m-j} x^r_{r-m,j,s}
	= \sum_{j=1}^{r-1} \sum_{m=1}^{r-j} \sum_{s=0}^{r-m-j} x^r_{r-m,j,s}
	= \sum_{j=1}^{r-1} \sum_{s=0}^{r-j-1} \sum_{m=1}^{r-j-s}  x^r_{r-m,j,s}. % b^{2m}\otimes  B^{r-m-s}(j-1) \otimes a^{2s}.
	\end{equation*}
	Using the definition of $B^r(j)$ in \eqref{eq:defBri_alpha}, we then have
	\[
	T % = \sum_{j=1}^{r-1} \sum_{s=0}^{r-j-1} \sum_{m=1}^{r-j-s}   b^{2m}\otimes  B^{r-m-s}(j-1) \otimess a^{2s}
	= \sum_{j=1}^{r-1} \sum_{s=0}^{r-j-1} a^{2s}\otimess 
	\bigg(  \sum_{m=1}^{r-s-j}  B^{r-s-m}(j-1)\otimess  b^{2m} \bigg) 
	= -\sum_{j=1}^{r-1} \sum_{s=0}^{r-j-1} a^{2s}  \otimess B^{r-s}(j).
	\] 
	We change the index $k=j+1$ in this equality to obtain from \eqref{eq:proofdefCr_bis_alpha} (recall that $B^r(0)=-b^{2r}$)
	\[
	\Cten^r
	= a^{2r} + \sum_{s=0}^{r-1} a^{2s} \otimess  B^{r-s}(0) + \sum_{k=2}^{r} \sum_{s=0}^{r-k} a^{2s}\otimess B^{r-s}(k-1) 
	= a^{2r} + \sum_{k=1}^{r} \sum_{s=0}^{r-k}a^{2s}\otimess  B^{r-s}(k-1) .
	\]
	This expression matches the definition of $\tilde\Cten^r$ in \eqref{eq:definitionCr_alpha}.
	Hence, we have proved that $\tilde\Cten^r=\Cten^r$ for all $0\leq r\leq \lfloor\alpha/2\rfloor$ and the proof of Lemma \ref{lem:bousstricklemma3} is complete.
\end{proofof}

% % % % % % % % % % % % % % % % % % % % % % % % % % % % % % % % % % % % % % % % % % % % %
% % % % % % % % % % % % % % % % % % % % % % % % % % % % % % % % % % % % % % % % % % % % %
\subsubsection{Use of the inductive Boussinesq tricks}	\label{subsec:proofbousstrick2}

In the asymptotic expansion in Section \ref{subsec:derivationFamily_subsec}, we need to substitute the terms involving $\dtt\tildeub$ in the development of $r^\eps$ in \eqref{eq:waveAsymptoticDeveloppment_alpha}.
This task is performed in the following result, obtained thanks to Theorem \ref{thm:rewritedtttildeub_2_alpha}.

\begin{lemma}	\label{lem:inductiveBoussinesqTricks_alpha}
	Define tensors $\Pten^{k}\in\Ten^{k}(\R^d)$ as
	\begin{equation}	\label{eq:definitionPk}
	\Pten^{2r} = (-1)^r c^r,
	\qquad
	\Pten^{2r+1} = 0,
	\qquad
	0\leq r\leq \lfloor\alpha/2\rfloor,
	\end{equation}
	where $c^r$ is defined in \eqref{eq:definitionCr}
	and let $\chi^0=1$ and $\{\chi^k\}_{k=1}^\alpha$ be the tensor functions in \eqref{eq:adaptation2_alpha}.
	Then the following equality holds
	\begin{equation}	\label{eq:inductiveBoussinesqTricks_alpha}	
	\begin{aligned}
	\dtt\tildeub 
	%+\sum_{k=1}^\alpha \eps^k\dtt u^k
	+\sum_{k=1}^\alpha \eps^k\chi^k\nabxn{k}\dtt \tildeub
	=  f 
	+\sum_{k=0}^{\alpha}\eps^{k} \bigg(  \sum_{j=0}^{k}\Pten^{k-j}\otimes\chi^j \bigg) \nabxn {k+2}\tildeub 
	+\mathcal{S}^\eps f+ \Reps\tildeub,
	\end{aligned}
	\end{equation}
	where the remainders $\mathcal{S}^\eps f$ and $\Reps\tildeub$ are defined in \eqref{eq:defSReps}, \eqref{eq:defSReps_Nk}.
	In particular, provided sufficient regularity of $\tildeub$, $\Reps\tildeub$ has order $\mathcal{O}(\eps^{\alpha+1})$ in the $\Li(0,\eps^{-\alpha}T; \LdOm)$ norm.
\end{lemma}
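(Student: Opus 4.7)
The plan is to apply Theorem~\ref{thm:rewritedtttildeub_2_alpha} once for each $k\in\{0,1,\ldots,\alpha\}$ to substitute every occurrence of $\dtt\tildeub$ on the left-hand side of \eqref{eq:inductiveBoussinesqTricks_alpha}. For each $k$, I would choose $N_k$ to be the largest even integer with $k+N_k\leq\alpha$, namely $N_k:=2\lfloor(\alpha-k)/2\rfloor$. Multiplying the identity of Theorem~\ref{thm:rewritedtttildeub_2_alpha} by $\eps^k\chi^k\nabxn{k}$ (with the convention $\chi^0=1$, so that the case $k=0$ is just the theorem itself) gives
\begin{equation*}
\eps^k\chi^k\nabxn{k}\dtt\tildeub
= \eps^k\chi^k\nabxn{k}f
+ \sum_{r=0}^{N_k/2}\eps^{k+2r}(-1)^r(\chi^k\otimes c^r)\nabxn{k+2r+2}\tildeub
+ \eps^k\chi^k\nabxn{k}\bigl(\mathcal{S}^\eps_{N_k}f+\mathcal{R}^\eps_{N_k}\tildeub\bigr).
\end{equation*}

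Summing these identities over $k=0,\ldots,\alpha$ and collecting polynomial-in-$\eps$ terms by their total power, the coefficient of $\eps^m\nabxn{m+2}\tildeub$ (for $0\leq m\leq\alpha$) is $\sum\{(-1)^r\chi^k\otimes c^r : k+2r=m,\;k,r\geq 0\}$. By \eqref{eq:definitionPk} we have $\Pten^{2r}=(-1)^rc^r$ and $\Pten^{2r+1}=0$, so this coefficient rewrites as $\sum_{j=0}^m\chi^j\otimes\Pten^{m-j}$, and since the tensor product is commutative up to symmetries and $\nabxn{m+2}$ contracts symmetrically, it gives the same operator as $\sum_{j=0}^m\Pten^{m-j}\otimes\chi^j$, which is what appears in \eqref{eq:inductiveBoussinesqTricks_alpha}. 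A short parity analysis confirms that the choice $N_k=2\lfloor(\alpha-k)/2\rfloor$ places every pair $(k,r)$ with $k+2r\leq\alpha$ inside the finite sum above (and not in any remainder).

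All leftover contributions are absorbed into
\begin{equation*}
\mathcal{S}^\eps f := \mathcal{S}^\eps_{N_0}f + \sum_{k=1}^{\alpha}\eps^k\chi^k\nabxn{k}f + \sum_{k=1}^{\alpha}\eps^k\chi^k\nabxn{k}\mathcal{S}^\eps_{N_k}f,
\qquad
\Reps\tildeub := \mathcal{R}^\eps_{N_0}\tildeub + \sum_{k=1}^{\alpha}\eps^k\chi^k\nabxn{k}\mathcal{R}^\eps_{N_k}\tildeub.
\end{equation*}
A case analysis on the parity of $\alpha$ shows that $k+N_k+2\geq\alpha+1$ for every $0\leq k\leq\alpha$; combined with the estimate $\norm{\nabxn{k}\mathcal{R}^\eps_{N_k}\tildeub}_{\Li\Lti\LdOm}\leq C\eps^{N_k+2}$ from Theorem~\ref{thm:rewritedtttildeub_2_alpha} and the $\Li$-boundedness of the correctors $\chi^k$, this yields $\Reps\tildeub=\mathcal{O}(\eps^{\alpha+1})$ in $\Li(0,\eps^{-\alpha}T;\LdOm)$, as claimed.

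The main obstacle is purely combinatorial bookkeeping: verifying by careful rearrangement of the double sum in $(k,r)$ that the diagonal terms with $k+2r\leq\alpha$ regroup exactly into the announced single sum in $m$, and that the boundary between these diagonal terms and the off-diagonal contributions with $k+2r>\alpha$ is governed by the choice of $N_k$ in such a way that nothing of order lower than $\eps^{\alpha+1}$ leaks into $\Reps\tildeub$. All remaining steps reduce to invoking the estimates already provided by Theorem~\ref{thm:rewritedtttildeub_2_alpha}.
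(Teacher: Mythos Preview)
Your proposal is correct and follows essentially the same approach as the paper's proof: apply Theorem~\ref{thm:rewritedtttildeub_2_alpha} to each term $\eps^k\chi^k\nabxn{k}\dtt\tildeub$, sum, and regroup. Your closed-form choice $N_k=2\lfloor(\alpha-k)/2\rfloor$ is exactly the paper's $N(k)$ (the paper writes it separately for even and odd $k$ as $N(2\ell)=2(\lfloor\alpha/2\rfloor-\ell)$ and $N(2\ell-1)=2(\lceil\alpha/2\rceil-\ell)$, which agree with your formula), and your remainders $\mathcal{S}^\eps f$, $\Reps\tildeub$ coincide with the paper's \eqref{eq:defSReps}. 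The combinatorial regrouping is also the same in spirit; the paper carries it out by splitting even and odd $k$ and changing indices, while you do it directly via $m=k+2r$ and the observation that $\Pten^{2r+1}=0$. One minor inaccuracy: you invoke ``$\Li$-boundedness of the correctors $\chi^k$'', but in this paper the correctors are only known to lie in $\Hu(Y)$; the actual $\LdOm$ estimate of $\Reps\tildeub$ is obtained later via Lemma~\ref{lem:estimate_dct}, and at the level of Lemma~\ref{lem:inductiveBoussinesqTricks_alpha} the paper, like you, only asserts the order $\mathcal{O}(\eps^{\alpha+1})$ qualitatively under ``sufficient regularity of $\tildeub$''.
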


\begin{proofof}{Lemma \ref{lem:inductiveBoussinesqTricks_alpha}}
	%We have to prove expression \eqref{eq:inductiveBoussinesqTricks_alpha}
	Let us define $T \coloneqq\sum_{k=0}^\alpha \eps^k\chi^k\nabxn{k}\dtt\tildeub$.
	%Splitting the index set by parity
	%\[
	%\big\{0\leq k \leq \alpha\big\} =
	%\big\{2\ell \,:\, 0\leq \ell\leq \lfloor\alpha/2\rfloor\big\}
	%\sqcup
	%\big\{2\ell-1 \,:\, 1\leq \ell\leq \lceil\alpha/2\rceil\big\},
	%\]
	Using Theorem \ref{thm:rewritedtttildeub_2_alpha} and the definition of $\Pten^k$, we obtain for any choice of even integers $0\leq N(k)\leq\alpha$:
	%\begin{equation}	\label{eq:inducBousTrickProof_1}
	%T 
	%%= \sum_{k=0}^{\alpha} \epsilon^{k} \chi^k \nabxn{k}\dtt\tildeub
	%= \sum_{k=0}^{\alpha} \epsilon^{k} \chi^k \nabxn{k}
	%	\Big(f + \sum_{j=0}^{N(k)} \epsilon^{j} \Pten^{j} \nabxn{j+2}\tildeub 
	%		+ \mathcal{S}^\eps_{N(k)}f + \mathcal{R}^\eps_{N(k)}\tildeub\Big)
	%=
	%f +
	%\sum_{k=0}^{\alpha} \sum_{j=0}^{N(k)} z^j_k
	%+\mathcal{S}^\eps f +\mathcal{R}^\eps \tildeub,
	%\end{equation}
	\begin{equation}	\label{eq:inducBousTrickProof_1}
	T 
	%= \sum_{k=0}^{\alpha} \epsilon^{k} \chi^k \nabxn{k}\dtt\tildeub
	= \sum_{k=0}^{\alpha} \epsilon^{k} \chi^k \nabxn{k}
	\bigg(f + \sum_{r=0}^{N(k)/2} \epsilon^{2r} \Pten^{2r} \nabxn{2r+2}\tildeub 
	+ \mathcal{S}^\eps_{N(k)}f + \mathcal{R}^\eps_{N(k)}\tildeub\bigg)
	=
	f +
	\sum_{k=0}^{\alpha} \sum_{r=0}^{N(k)/2} z^{2r}_k
	+\mathcal{S}^\eps f +\mathcal{R}^\eps \tildeub,
	\end{equation}
	where $z^j_k$ and the remainders are defined as
	\begin{equation}	\label{eq:defSReps}
	z^i_k = \epsilon^{k+i} \Pten^{i}\otimes\chi^k \nabxn{k+i+2}\tildeub,
	\quad
	\mathcal{S}^\eps f = -f+\sum_{k=0}^{\alpha} \epsilon^{k} \chi^k \nabxn{k}\big( f + \mathcal{S}^\eps_{N(k)} f\big),
	\quad
	\mathcal{R}^\eps\tildeub = \sum_{k=0}^{\alpha} \epsilon^{k} \chi^k \nabxn{k}\big(\mathcal{R}^\eps_{N(k)}\big) \tildeub,
	\end{equation}
	with $\mathcal{S}^\eps_{N(k)} f$ and $\mathcal{R}^\eps_{N(k)} \tildeub$ the remainders defined in \eqref{eq:defReps0}.
	The first estimate in \eqref{eq:estimateR1eps} implies that  
	\begin{equation*}
	\norm{\mathcal{R}^\eps\tildeub}_{\Li(\eps^{-\alpha}T; \LdOm)}
	\leq C(\tildeub) \sum_{k=0}^\alpha \eps^{k+N(k)+2}.
	\end{equation*}
	Hence, as our goal is for this quantity to have order $\mathcal{O}(\eps^{\alpha+1})$ (see the discussion in Section \ref{subsec:derivationFamily_subsec}), we need to set $N(k)$ such that
	\[
	k+N(k)+2 \geq \alpha+1
	\qquad
	\Leftrightarrow
	\qquad
	N(k) \geq \alpha-k-1.
	\]
	Dealing with even and odd indices separately, we verify that the smallest even integer satisfying the above inequality is 
	\begin{equation}	\label{eq:defSReps_Nk}
	N(2\ell) = 2\big(\lfloor\alpha/2\rfloor - \ell\big),
	\qquad
	N(2\ell-1) = 2\big(\lceil\alpha/2\rceil - \ell\big)
	\end{equation}
	We now rewrite the double sum in the right-hand side of \eqref{eq:inducBousTrickProof_1} separately for even and odd index $k$.
	For the sum on even index $\big\{k=2\ell \,:\, 0\leq \ell\leq \lfloor\alpha/2\rfloor\big\}$, we have
	\[
	\sum_{\ell=0}^{\lfloor\alpha/2\rfloor} \sum_{r=0}^{\lfloor\alpha/2\rfloor-\ell} z^{2r}_{2\ell}
	= \sum_{\ell=0}^{\lfloor\alpha/2\rfloor} \sum_{m=\ell}^{\lfloor\alpha/2\rfloor} z^{2(m-\ell)}_{2\ell}
	= \sum_{m=0}^{\lfloor\alpha/2\rfloor} \bigg(	
	\sum_{\ell=0}^{m}z^{2m-2\ell}_{2\ell}
	+ \sum_{\ell=1}^{m} z^{2m-(2\ell-1)}_{2\ell} \bigg)
	.
	\]
	where in the first equality we changed the index $m=r+\ell$ and in the second we exchanged the order of summation and used that $z^{2s+1}_k=0$ for any $s,k$.
	Similarly, for the sum on odd index $\big\{k=2\ell-1 \,:\, 1\leq \ell\leq \lceil\alpha/2\rceil\big\}$, we have
	\[
	\sum_{\ell=1}^{\lceil\alpha/2\rceil} \sum_{r=0}^{\lceil\alpha/2\rceil-\ell} z_{2\ell-1}^{2r}
	= \sum_{\ell=1}^{\lceil\alpha/2\rceil} \sum_{m=\ell}^{\lceil\alpha/2\rceil} z_{2\ell-1}^{2(m-\ell)}
	= \sum_{m=1}^{\lceil\alpha/2\rceil} \bigg(
	\sum_{\ell=1}^{m} z_{2\ell-1}^{2m-1-(2\ell-1)} + \sum_{\ell=0}^{m-1} z_{2\ell}^{2m-1-2\ell}
	\bigg)
	.
	\]
	Using the two last equalities in \eqref{eq:inducBousTrickProof_1}, we gather the sums to find
	\[
	T
	= f+\sum_{m=0}^{\lfloor\alpha/2\rfloor} \sum_{j=0}^{2m} z_{j}^{2m-j}+
	\sum_{m=1}^{\lceil\alpha/2\rceil} \sum_{j=0}^{2m-1} z_{j}^{2m-1-j}
	+\mathcal{S}^\eps f +\mathcal{R}^\eps \tildeub
	= 
	f+
	\sum_{k=0}^{\alpha} \sum_{j=0}^{k} z_{j}^{k-j}
	+\mathcal{S}^\eps f +\mathcal{R}^\eps \tildeub.
	\]
	Replacing $z^{k-j}_j$ by its definition in \eqref{eq:defSReps}, we obtain \eqref{eq:inductiveBoussinesqTricks_alpha} and that concludes the proof of Lemma \ref{lem:inductiveBoussinesqTricks_alpha}.
\end{proofof}

% % % % % % % % % % % % % % % % % % % % % % % % % % % % % % % % % % % % % % % % % % % % %
% % % % % % % % % % % % % % % % % % % % % % % % % % % % % % % % % % % % % % % % % % % % %
\subsection{Proof of the a priori error estimate for the family of effective equations (Theorem \ref{thm:est_epsm_alpha})}	\label{subsec:summaryproof}

In this section, we prove Theorem \ref{thm:est_epsm_alpha}.
Note that the main ingredient of the proof is an adaptation operator based on the adaptation constructed in Section \ref{subsec:derivationFamily_subsec}.

Let us first introduce some notations.
We use a bracket $\eqcbiss v$ to denote the equivalence class of $v\in\Ld\ofOm$ in $\calLd\ofOm$,
and a bold face letter $\eqc v$ to denote elements of $\calWper\of\Om$.
Note that the Hilbert space $\calLd\ofOm$ is equipped with the inner product
%\begin{equation}
$\psbig{\eqcbiss v,\eqcbiss w}_{\calLd\ofOm} 
= \psbig{v- \intMean{v}_\Om,w -\intMean{w}_\Om}_{\Ld\ofOm}
%	= \psbig{v,w}_{\Ld\ofOm} - |\Om|\intMean{v}_\Om \intMean{w}_\Om
$.
We define the following norm on $\calWper\of\Om$
\begin{equation*}	%\label{eq:definitionNormcalW}
\norm{\eqc{w}}_{\calWnorm} =
\inf_{\substack{\eqc{w}=\eqc w_1 +\eqc w_2\\ \eqc w_i = [w_i] \in\calWper\of\Om}}
\Big\{ 		\norm{\eqcbiss{w_1}}_{\calLd\of\Om} + \norm{\nabla w_2}_{\Ld\of\Om} 	\Big\}
\quad \forall \eqc w\in\calWper\of\Om.
\end{equation*}
In particular, we verify that a function $w\in\WperOm$ satisfies 
$\norm{w}_{\Wnorm} = \norm{\eqcbiss{w}}_{\calWnorm}$, 
where $\norm{{\cdot}}_\Wnorm$ is defined in \eqref{eq:definitionNormW}.

The proof of Theorem \ref{thm:est_epsm_alpha} is structured as follows.
First, based on the adaptation $\Beps\tildeub$ defined by \eqref{eq:adaptation1_alpha} and \eqref{eq:adaptation2_alpha}, we define the adaptation operator $\BBeps\tildeub$.
Then, using the triangle inequality, we split the error as
\begin{equation} \label{eq:errorSplit}
\norm{\ueps - \tildeub}_{\Li(\Wnorm)}
= \norm{\eqcbiss{\ueps - \tildeub}}_{\Li(\calWnorm)}
\leq 
\norm{\eqcbiss{\ueps} - \BBeps\tildeub}_{\Li(\calWnorm)}
+ \norm{\BBeps\tildeub-\eqcbiss{\tildeub} }_{\Li(\calWnorm)},
\end{equation}
and estimate the two terms of the right-hand side separately.

Let us first discuss the consequences of the assumptions made in the theorem.
The fact that the effective equation belongs to the family $\famEfEq$ (Definition \ref{def:familyofeffectivesolution_alpha}) has two major implications:
first, the equation is well-posed;
second, the tensors $\{a^{2r},b^{2r}\}_{r=1}^{\lfloor\alpha/2\rfloor}$ satisfy the constraints \eqref{eq:constainta2rb2r_alpha} and thus the cell problems \eqref{eq:cellProblemsWave_compact} are well-posed.
Hence, we have the existence and uniqueness of the effective solution $\tildeub$ and of the correctors $\chi^1,\hdots,\chi^{\alpha+2}$.
Inductively, we can show that the correctors satisfy the following bound
\[
\norm{\chi^k_{i_1\hdots i_k}}_{\Hu(Y)} \leq C\norm{a}_{\Li(Y)},
\]
where the constant $C$ depends only on the ellipticity constant $\lambda$ and the reference cell $Y$.
Let us then investigate the regularity of $\tildeub$ and $f$.
As we assume $d\leq 3$, the embedding 
$\Hk{2}_\per(\Om)\contEmb \fcnC^0_\per(\closure\Om)$ 
is continuous and the regularity assumption ensures that 
\begin{gather*}	%\label{eq:estimateproof2_dct}
f\in\Ld\Lti{\fcnC^{r(\alpha)-2}_\per(\closure\Om)},\\
\tildeub\in\Li\Lti{\fcnC^{r(\alpha)}_\per(\closure\Om)},
\quad
\dt\tildeub\in\Li\Lti{\fcnC^{r(\alpha)-1}_\per(\closure\Om)},
\quad
\dtt\tildeub\in\Li\Lti{\fcnC^{r(\alpha)-2}_\per(\closure\Om)},
\end{gather*}
where we recall that $r(\alpha) = \alpha+ 2\lfloor\alpha/2\rfloor + 2$ (as $\alpha\geq 2$ we have $r(\alpha)\geq \alpha+4$).

We are now able to define the adaptation operator.
Thanks to the regularity of $f$ and the correctors, $\mathcal{S}^\eps f$ defined in \eqref{eq:defSReps} belongs to $\Ld(0,\eps^{-\alpha} T;\calLdOm)$.
We can thus define $\eqc\varphi$ as the unique solution in $\Li(0,\eps^{-\alpha} T;\calWperOm)$ of 
\begin{equation*}	%\label{eq:correctorf_epsm3}
\begin{array}{ll}
(\dtt+\Aeps)\eqc\varphi(t) = 
-\bbrackl \mathcal{S}^\eps f(t) \bbrackr
&\text{in }\calWperStarOm~ \text{a.e.}~~ t\in[0,\eps^{-\alpha}T],\\[2pt]
\eqc\varphi(0)=\dt\eqc\varphi(0) = \eqcbis{0},
\end{array}
\end{equation*}
where $\Aeps$ is defined in \eqref{eq:defSmallreps}.
The adaptation operator is then defined as
\[
\BBeps : \Li(0,\eps^{-\alpha}T; \fcnC^{r(\alpha)-2}_\per(\closure\Om)) \to \Ld(0,\eps^{-\alpha}T; \calWperStarOm),
\qquad \BBeps\tildeub = \eqcbis{\Beps\tildeub} + \eqc\varphi,
\]
where $\Beps\tildeub$ is the adaptation constructed in Section \ref{subsec:derivationFamily_subsec} (see \eqref{eq:adaptation1_alpha} and \eqref{eq:adaptation2_alpha}):
\[
\Beps\tildeub(t,x) = \sum_{k=1}^{\alpha+2}\eps^k \chi^k\ofxsureps \nabxn k\tildeub(t,x).
\]
Assumption \eqref{eq:assumptionOmega} ensures that $x\mapsto \chi^k\ofxsureps \nabxn k\tildeub(t,x)$ is $\Omega$-periodic.
Using the regularity of the correctors and $\tildeub$, we verify that the adaptation
\[
\BBeps\tildeub \in\Li(0,\eps^{-\alpha} T;\calWperOm),
\quad
\dt\BBeps\tildeub \in\Li(0,\eps^{-\alpha} T;\calLdOm),
\quad
\dtt\BBeps\tildeub \in\Li(0,\eps^{-\alpha} T;\calWperStarOm),
\]
(notice that the regularity of $\eqc\varphi$ and its derivatives are weaker than that of $\Beps\tildeub$).

Define then $\eqc \eta^\eps(t) = \BBeps\tildeub(t)-\eqcbiss{\ueps(t)}$
and recall that we developed $r^\eps=(\dtt+\Aeps) (\Beps\tildeub -\ueps)$ in Section \ref{subsec:derivationFamily_subsec}.
The development in \eqref{eq:waveAsymptoticDeveloppment_2_alpha} together with the cell problems \eqref{eq:cellProblemsWave_compact} ensure that
\begin{equation} \label{eq:proofapee_1}
(\dtt+\Aeps) \eqc\eta^\eps(t)
= \eqcbis{r^\eps(t)} +(\dtt+\Aeps)\eqc\varphi(t)
= \eqcbis{r^\eps(t) - \mathcal{S}^\eps f(t)}
= \eqcbis{\mathcal{R}_{\mathrm{ini}}^\eps\tildeub(t) + \mathcal{R}^\eps\tildeub(t)},
\end{equation}
where $\mathcal{R}_{\mathrm{ini}}^\eps\tildeub(t)$ is defined in \eqref{eq:Repsini}
and $\mathcal{R}^\eps\tildeub$ in \eqref{eq:defSReps}.
The following error estimate can then be used to quantify $\eqc\eta^\eps$ (see \cite[Corollary 2.2]{AbP16} or \cite[Corollary 4.2.2]{Pou17}).
\begin{lemma}	\label{lem:errorestimateindepoftimeanddomain}
	\sloppy
	Assume that $\eqc\eta\in\Li(0,\eps^{-\alpha}T;\calWperOm)$, with $\dt\eqc\eta\in\Li(0,\eps^{-\alpha}T;\calLdOm)$, $\dtt\eqc\eta\in\Ld(0,\eps^{-\alpha}T;\calWperStarOm)$ satisfies
	\begin{equation*}
	\begin{array}{l}
	\dtt\eqc\eta(t)+\Aeps \eqc\eta(t) = \eqc{r}(t) \quad \text{ in } \calWperStarOm~~\forae t\in[0,\eps^{-\alpha}T],\\[2pt]
	\eqc\eta(0) = \eqc\eta^0,\quad \dt\eqc\eta(0) = \eqc\eta^1,
	\end{array}
	\end{equation*}
	where $\eqc\eta^0\in\calWperOm$, $\eqc\eta^1\in\calLdOm$,
	and 
	$\eqc{r} \in\Lu(0,\eps^{-\alpha}T;\calLdOm)$.
	Then the following estimate holds
	\begin{equation*}%	\label{eq:errorestimate_estimate_1}
	\norm{\eqc\eta}_{\Li(0,\eps^{-\alpha}T;\calWnorm)} 
	\leq C(\lambda) \big( \norm{\eqc\eta^1}_\calLdOm + \norm{\eqc\eta^0}_\calLdOm 
	+ \norm{\eqc{r}}_{\Lu(0,\eps^{-\alpha}T;{\calLdOm})}\big),
	\end{equation*}
	where $C(\lambda)$ depends only on the ellipticity constant $\lambda$.
	If in addition, $\eqc{r} \in\Li(0,\eps^{-\alpha}T;\calLdOm)$, then
	\begin{equation*}%	\label{eq:errorestimate_estimate_2}
	\norm{\eqc\eta}_{\Li(0,\eps^{-\alpha}T;\calWnorm)} 
	\leq C(\lambda) \big( \norm{\eqc\eta^1}_\calLdOm + \norm{\eqc\eta^0}_\calLdOm 
	+ \eps^{-\alpha}T\norm{\eqc{r}}_{\Li(0,\eps^{-\alpha}T;{\calLdOm})}\big).
	\end{equation*}
\end{lemma}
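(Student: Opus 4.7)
The plan is to prove this energy-type estimate via the classical trick of integrating the wave equation once in time and working with the antiderivative. The key observation that makes the result so clean is that, by definition of $\norm{\cdot}_{\calWnorm}$ (choose the decomposition $\eqc\eta = \eqc\eta + 0$), one has $\norm{\eqc\eta(t)}_{\calWnorm}\leq \norm{\eqcbiss{\eta(t)}}_{\calLdOm}$, so it suffices to estimate $\eqc\eta$ in $\Li(0,\Teps;\calLdOm)$. This is why no $\nabla\eqc\eta^0$ appears on the right-hand side, a feature one would not obtain from the naive energy estimate.

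Let $\Phi(t)=\int_0^t \eqc\eta(s)\,\d s\in\calWperOm$. Integrating the equation in time gives, in $\calWperStarOm$,
\[
\dt\eqc\eta(t) + \Aeps\Phi(t) = \eqc\eta^1 + R(t),\qquad R(t):=\int_0^t \eqc{r}(s)\,\d s.
\]
Testing this identity with $\eqc\eta(t)=\dt\Phi(t)\in\calWperOm$ in the $\calWperStarOm\times\calWperOm$ duality, and using the standard identities $\dualEval{\dt\eqc\eta,\eqc\eta}=\tfrac{1}{2}\ddt \norm{\eqcbiss\eta}^2_{\calLdOm}$ and $\dualEval{\Aeps\Phi,\dt\Phi}=\tfrac{1}{2}\ddt (a^\eps(\Phi,\Phi))$, where $a^\eps(u,v)=\ps{a\ofdotsureps\nabla u,\nabla v}_{\Ld\ofOm}$, one obtains after integration from $0$ to $t$
\[
\tfrac{1}{2}\norm{\eqcbiss{\eta(t)}}^2_{\calLdOm} + \tfrac{1}{2}a^\eps\big(\Phi(t),\Phi(t)\big)
= \tfrac{1}{2}\norm{\eqc\eta^0}^2_{\calLdOm}
+ \int_0^t \big\langle \eqc\eta^1+R(s),\eqc\eta(s)\big\rangle_{\calLdOm}\,\d s.
\]
Dropping the nonnegative elliptic term ($a^\eps(\Phi,\Phi)\geq 0$) and bounding $\norm{R(s)}_{\calLdOm}\leq \norm{\eqc{r}}_{\Lu(0,\Teps;\calLdOm)}$ yields
\[
\norm{\eqcbiss{\eta(t)}}^2_{\calLdOm} \leq \norm{\eqc\eta^0}^2_{\calLdOm}
+ 2\big(\norm{\eqc\eta^1}_{\calLdOm}+\norm{\eqc{r}}_{\Lu(0,\Teps;\calLdOm)}\big)\sup_{0\leq s\leq t}\norm{\eqcbiss{\eta(s)}}_{\calLdOm}.
\]

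Taking the supremum on the left-hand side over $t\in[0,\Teps]$, applying Young's inequality $2AB\leq \tfrac{1}{2}A^2+2B^2$ with $A=\sup_t\norm{\eqcbiss{\eta(t)}}_{\calLdOm}$, and absorbing one factor of the supremum gives the first announced bound (with a constant depending only on $\lambda$ through the trivial nonnegativity of $a^\eps$; in fact the argument above did not use $\lambda$ other than to justify discarding the elliptic term, so the constant is actually universal). The second estimate follows immediately from the trivial bound $\norm{\eqc{r}}_{\Lu(0,\Teps;\calLdOm)}\leq \Teps \norm{\eqc{r}}_{\Li(0,\Teps;\calLdOm)}=\eps^{-\alpha}T\norm{\eqc{r}}_{\Li(0,\Teps;\calLdOm)}$.

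The main subtlety, which is not a deep obstacle but must be handled carefully, is the rigorous justification of the testing step: since $\dtt\eqc\eta$ only lies in $\Ld(0,\Teps;\calWperStarOm)$, the duality pairing and the identities $\dualEval{\dt\eqc\eta,\eqc\eta}=\tfrac{1}{2}\ddt\norm{\eqcbiss\eta}_{\calLdOm}^2$ etc.\ should be understood in the sense of distributions in time, which is standard for the class $\eqc\eta\in\Li(\calWperOm)$, $\dt\eqc\eta\in\Li(\calLdOm)$, $\dtt\eqc\eta\in\Ld(\calWperStarOm)$ (see e.g.\ \cite{LiM68,Eva98}). One also has to check that the duality pairing on the quotient space $\calWperOm$ is compatible with the above manipulations, which follows because $\Aeps$ maps constants to zero and hence descends to the quotient.
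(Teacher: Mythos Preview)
Your energy identity is correct, but the inequality you deduce from it is not. From
\[
\tfrac{1}{2}\norm{\eqcbiss{\eta(t)}}^2_{\calLdOm} + \tfrac{1}{2}a^\eps\big(\Phi(t),\Phi(t)\big)
= \tfrac{1}{2}\norm{\eqc\eta^0}^2_{\calLdOm}
+ \int_0^t \big\langle \eqc\eta^1+R(s),\eqc\eta(s)\big\rangle_{\calLdOm}\,\d s,
\]
you claim the right-hand integral is bounded by $(\norm{\eqc\eta^1}+\norm{\eqc r}_{\Lu})\sup_{s\le t}\norm{\eqcbiss{\eta(s)}}$. This is false: the integrand $\norm{\eqc\eta^1+R(s)}$ is \emph{not} integrable with a bound independent of $t$; in particular $\int_0^t\norm{\eqc\eta^1}\,\d s=t\norm{\eqc\eta^1}$, and similarly $\int_0^t\norm{R(s)}\,\d s\le t\norm{\eqc r}_{\Lu}$. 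Your argument therefore produces a factor $t$ on the right, which can be as large as $\eps^{-\alpha}T$. That is precisely the dependence the lemma is designed to avoid, so the gap is fatal.

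The paper does not give a proof here (it cites \cite[Corollary~2.2]{AbP16} and \cite[Corollary~4.2.2]{Pou17}), but the mechanism that makes the estimate work is one you explicitly discarded: the \emph{infimum} in the definition of $\norm{\cdot}_{\calWnorm}$. Rather than the trivial decomposition $\eqc\eta=\eqc\eta+0$, split $\eqc\eta=\eqc\eta_1+\eqc\eta_2$ by linearity of the wave equation, with $\eqc\eta_2$ solving the homogeneous equation with data $(\eqc\eta^0,0)$ and $\eqc\eta_1$ solving the equation with source $\eqc r$ and data $(0,\eqc\eta^1)$. For $\eqc\eta_2$ your antiderivative trick works \emph{cleanly}: since both the source and the initial velocity vanish, $\Phi_2=\int_0^t\eqc\eta_2$ satisfies the homogeneous wave equation with data $(0,\eqc\eta^0)$, and energy conservation gives $\norm{\eqcbiss{\eta_2(t)}}_{\calLdOm}=\norm{\dt\Phi_2(t)}_{\calLdOm}\le\norm{\eqc\eta^0}_{\calLdOm}$. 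For $\eqc\eta_1$ the classical energy estimate (testing the original equation with $\dt\eqc\eta_1$) gives $\sqrt{\lambda}\,\norm{\nabla\eta_1(t)}_{\Ld}\le\norm{\eqc\eta^1}_{\calLdOm}+\norm{\eqc r}_{\Lu(\calLdOm)}$, with no growth in $t$ because the initial position vanishes. Inserting this particular decomposition into the definition of $\norm{\cdot}_{\calWnorm}$ yields the lemma with $C(\lambda)=\max(1,\lambda^{-1/2})$.
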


In order to estimate the remainder terms in \eqref{eq:proofapee_1}, we also need the following result.

\begin{lemma}	\label{lem:estimate_dct}
	Let $\Yperfunc\in\Ldper(Y)$ and $v\in\Hk{2}_\per(\Omega)$.
	Then the following estimate holds
	\begin{equation}	\label{eq:lem:estimate_dct}
	\normbig{\Yperfunc\big(\sureps{\cdot}\big) v}_{\Ld(\Om)} \leq C \norm{\Yperfunc}_{\Ld(Y)} \norm{v}_{\Hk{2}(\Omega)},
	\end{equation}
	where the constant $C$ depends only on $Y$ and $d$.
\end{lemma}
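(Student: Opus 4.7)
My plan is to reduce the estimate to a cell-by-cell bound, exploiting the periodic decomposition of $\Omega$ guaranteed by assumption \eqref{eq:assumptionOmega}, and then to invoke the Sobolev embedding $\fcnH^2(Y)\hookrightarrow \fcnC^0(\closure Y)$, which is valid precisely because $d\leq 3$.

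First, using \eqref{eq:assumptionOmega}, I decompose $\Omega$ as the disjoint (up to measure zero) union of translated scaled reference cells $K_i = \eps(Y+k_i)$, where $k_i$ ranges over the multi-indices indexing the cells. On each cell I bound the integrand by the supremum of $v^2$:
\begin{equation*}
\int_{K_i} \big|\Yperfunc(x/\eps)\big|^2 |v(x)|^2 \, \d x
\leq \|v\|_{\Li(K_i)}^2 \int_{K_i} \big|\Yperfunc(x/\eps)\big|^2 \, \d x
= \eps^d \|v\|_{\Li(K_i)}^2 \|\Yperfunc\|_{\Ld(Y)}^2,
\end{equation*}
where the last equality follows from the change of variables $y = x/\eps - k_i$ together with the $Y$-periodicity of $\gamma$.

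Second, I need to control $\eps^d\|v\|_{\Li(K_i)}^2$ by an $\fcnH^2$-type quantity on $K_i$. Here I use the Sobolev embedding on the reference cell: for $d\leq 3$ there is a constant $C_Y$ such that $\|u\|_{\Li(Y)} \leq C_Y \|u\|_{\Hk2(Y)}$ for all $u\in \fcnH^2_\per(Y)$. Applying this to $u(y) = v(\eps(y+k_i))$ and tracking the scaling factors (note $\nabla u = \eps \nabla v$ and $\nabla^2 u = \eps^2 \nabla^2 v$), I obtain
\begin{equation*}
\eps^d \|v\|_{\Li(K_i)}^2
\leq C\big( \|v\|_{\Ld(K_i)}^2 + \eps^2 \|\nabla v\|_{\Ld(K_i)}^2 + \eps^4 \|\nabla^2 v\|_{\Ld(K_i)}^2 \big)
\leq C \|v\|_{\Hk2(K_i)}^2,
\end{equation*}
where in the last inequality I use $\eps \leq 1$. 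Summing over $i$ and using that the cells $K_i$ tile $\Omega$ yields
\begin{equation*}
\|\Yperfunc(\cdot/\eps) v\|_{\Ld(\Omega)}^2
\leq \eps^d \|\Yperfunc\|_{\Ld(Y)}^2 \sum_i \|v\|_{\Li(K_i)}^2
\leq C \|\Yperfunc\|_{\Ld(Y)}^2 \|v\|_{\Hk2(\Omega)}^2,
\end{equation*}
which is \eqref{eq:lem:estimate_dct} after taking square roots.

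The proof is essentially routine; the only step requiring care is the scaling in the Sobolev embedding, which is where the hypothesis $d\leq 3$ enters (for $d\geq 4$ one would need to replace $\fcnH^2$ by a higher-order space, consistent with the remark following Theorem \ref{thm:est_epsm_alpha}). I expect no genuine obstacle beyond a clean accounting of the $\eps$-powers in the change of variables.
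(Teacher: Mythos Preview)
Your proof is correct and follows the same overall strategy as the paper: decompose $\Omega$ into $\eps$-cells, pull out $\|\gamma\|_{\Ld(Y)}$ on each cell, control $\|v\|_{\Li(K_i)}$ by a scaled Sobolev embedding, and sum. One small slip: the function $u(y)=v(\eps(y+k_i))$ is \emph{not} in $\fcnH^2_\per(Y)$ (since $v$ is $\Omega$-periodic, not $\eps Y$-periodic), but this is harmless because the embedding $\fcnH^2(Y)\hookrightarrow\fcnC^0(\closure Y)$ holds for the Lipschitz domain $Y$ without any periodicity.

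The paper's route differs in one technical detail worth noting. Rather than applying Sobolev embedding directly on the cube $Y$, the paper introduces an auxiliary domain $Z$ with $\fcnC^1$ boundary satisfying $Y\subset Z\subset N_Y$ (the $3^d$-cell neighborhood of $Y$), invokes $\fcnH^2(Z)\hookrightarrow\fcnC^0(\closure Z)$, and then controls $\|v_{\xi,\eps}\|_{\fcnH^2(N_Y)}$ via a finite-overlap argument (each $\eps$-cell lies in at most $6d-3$ neighborhoods). Your version is cleaner because you apply the embedding on the Lipschitz cube itself and avoid the overlap bookkeeping entirely; the paper's detour seems to stem from citing a Sobolev theorem stated for $\fcnC^1$ domains. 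Both approaches give a constant depending only on $Y$ and $d$.
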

\begin{proof}
	Recall that $Y=(0,\ell_1)\times\cdots\times(0,\ell_d)$ and $\Omega = (\omega_1^l,\omega_1^r)\times\cdots\times(\omega^l_d,\omega^r_d)$.
	As $\Omega$ satisfies \eqref{eq:assumptionOmega}, the numbers $N_i=\frac{\omega_i^l-\omega_i^r}{\ell_i\eps}$ are integers and the cells constituting $\Omega$ belongs to the set 
	$\{ \eps(n\cdot\ell + Y) : 0\leq n_i \leq N_i-1\}$.
	Denoting $\Xi = \{\xi = n\cdot \ell : 0\leq n_i \leq N_i-1\}$, the domain $\Omega$ satisfies
	\begin{equation}	\label{eq:estimateproof25_dct}
	\Omega = \mathrm{int}\bigg( \bigcup_{\xi\in\Xi} \eps (\xi + \closure{Y}) \bigg).
	\end{equation}
	Hence, almost every $x\in\Omega$ can be written as $x=\eps(\xi+y)$ for some $\xi\in\Xi, y\in Y$.
	For such triplet $(x,\xi,y)$, the $Y$-periodic function $\Yperfunc$ satisfies
	$\Yperfunc\ofxsureps = \Yperfunc(\xi+y) = \Yperfunc(y)$.
	Let $Z\subset\R^d$ be an open set with a $\fcnC^{1}$ boundary, that contains $Y$ 
	and is contained in its neighborhood, i.e., %for a fixed $\delta>0$
	\[
	Y\subset 
	Z \subset N_Y = (-\ell_1,2\ell_1)\times\cdots\times(-\ell_d,2\ell_d).
	%Z \subset N^\delta_Y = (-\delta,\ell_1+\delta)\times\cdots\times(-\delta,\ell_d+\delta).
	\]
	%For example,
	%$Z =  F_Y^{-1}(S)$,
	%where $S$ is the open sphere of diameter $\sqrt{d}$ centered in $(1/2,\cdots,1/2)$ (recall that $d\leq 3$)
	%and $F_Y: N_Y\to (-1,2)^d$ is a smooth change of coordinates.
	As $Z$ has a $\fcnC^1$ boundary and $d\leq 3$, Sobolev embedding theorem ensures that the embedding $\Hk{2}(Z)\contEmb\fcnC^0(\closure{Z})$ is continuous.
	Hence, there exists a constant $C_{Y}$, depending only on $Y$, such that
	\begin{equation}	\label{eq:estimateproof275_dct}
	\norm{w}_{\fcnC^0(\closure{Y})}
	\leq \norm{w}_{\fcnC^0(\closure{Z})}
	\leq C_Y \norm{w}_{\Hk{2}(Z)}
	\leq C_Y \norm{w}_{\Hk{2}(N_Y)}
	\qquad\forall w\in\Hk{2}(N_Y).
	\end{equation}
	We now prove \eqref{eq:lem:estimate_dct}.
	Using \eqref{eq:estimateproof25_dct}, we have
	\begin{equation*}
	\normbig{\Yperfunc\big(\sureps{\cdot}\big) v}_{\Ld(\Om)}^2
	= \sum_{\xi\in\Xi} 	\int_{\eps(\xi+Y)} \Big| \Yperfunc\ofxsureps v(x)\Big|^2 \,\d x
	= \sum_{\xi\in\Xi} 	\int_{Y} \Big| \Yperfunc(y) v\big(\eps(\xi+y)\big)\Big|^2  \eps^d\,\d y,
	\end{equation*}
	where we made the change of variables
	$x=\eps(\xi+y)$.
	As $v\in\Hk{2}_\per(\Om)\contEmb\fcnC^0_\per(\closure\Om)$, we have
	\begin{equation}	\label{eq:estimateproof3_dct}
	\normbig{\Yperfunc\big(\sureps{\cdot}\big) v}_{\Ld(\Om)}^2
	\leq \norm{\Yperfunc}_{\LdY}^2 
	\sum_{\xi\in\Xi} \eps^d \norm{v_{\xi,\eps}}_{\fcnC^0(\closure{Y})}^2,
	\end{equation}
	where $v_{\xi,\eps}$ is the function of $\fcnC^0(\closure{Y})$ defined by $v_{\xi,\eps}(y) = v\big(\eps(\xi+y)\big)$.
	Using \eqref{eq:estimateproof275_dct} gives
	$\norm{v_{\xi,\eps}}_{\fcnC^0(\closure{Y})}\leq C_Y \norm{v_{\xi,\eps}}_{\Hk{2}(N_Y)}$.
	Furthermore, we have
	\[
	\eps^d\norm{v_{\xi,\eps}}_{\Hk{2}(N_Y)}^2
	= \int_{N_Y}{}  |v_{\xi,\eps}(y) |^2  \eps^d\d y
	+\int_{N_Y}{}  |\naby v_{\xi,\eps}(y) |^2  \eps^d\d y
	+\int_{N_Y}{}  |\nabla^2_{\!y} v_{\xi,\eps}(y) |^2  \eps^d\d y.
	%	\leq C\norm{v}_{\Hk{2}(\eps(\xi+Y))}^2
	\]
	As $\pa_{y_i}v_{\xi,\eps} = \eps \pa_{x_i}v$ and $\pa^2_{y_{ij}}v_{\xi,\eps} = \eps^2 \pa^2_{x_{ij}}v$, 
	the change of variable $x=\eps(\xi+y)$ leads to
	\begin{equation*}	%\label{eq:estimateproof3_dct}
	\normbig{\Yperfunc\big(\sureps{\cdot}\big) v}_{\Ld(\Om)}^2
	\leq C\norm{\Yperfunc}_{\LdY}^2 
	\sum_{\xi\in\Xi} \norm{v}_{\Hk{2}(\eps(\xi+N_Y))}^2
	\leq C(6d-3) \norm{\Yperfunc}_{\LdY}^2 
	\sum_{\xi\in\Xi} \norm{v}_{\Hk{2}(\eps(\xi+Y))}^2,
	\end{equation*}
	where we used that every cell $\eps(\xi+Y)$ belongs to the neighborhoods of $(6d-3)$ cells (including itself).
	This proves \eqref{eq:lem:estimate_dct} and the proof of the lemma is complete.
\end{proof}

With Lemma \ref{lem:estimate_dct}, we can estimate the remainders in \eqref{eq:proofapee_1}.
Using \eqref{eq:defSReps} and estimate \eqref{eq:estimateR1eps}, we obtain
\begin{equation*}	
\norm{\mathcal{R}^\eps\tildeub}_\Ld
\leq \sum_{k=0}^{\alpha} \epsilon^{k} 
\norm{\chi^k}_\Ld  \norm{\nabxn{k}\mathcal{R}^\eps_{N(k)} \tildeub}_{\Hk{2}},
\leq C\sum_{k=0}^{\alpha} \epsilon^{k+N(k)+2} \!\!\!\!
\sum_{j=N(k)+k+2}^{2\lfloor\alpha/2\rfloor+N(k)+k} 
\Big(\norm{\nabxn{j+2}\tildeub}_{\Hk{2}} + \norm{\nabxn{j}\dtt\tildeub}_{\Hk{2}}	\Big).
\end{equation*}
From \eqref{eq:defSReps_Nk}, we verify that $\alpha-1\leq N(k)+k \leq \alpha$
and thus
\begin{equation}	\label{eq:proofapee_2}	
\norm{\mathcal{R}^\eps\tildeub}_\Ld
\leq C\epsilon^{\alpha+1} \sum_{j=\alpha+1}^{2\lfloor\alpha/2\rfloor+\alpha} 
\Big(\norm{\nabxn{j+2}\tildeub}_{\Hk{2}} + \norm{\nabxn{j}\dtt\tildeub}_{\Hk{2}}	\Big).
\end{equation}
Similarly, using \eqref{eq:estimateR1eps} to estimate 
$\mathcal{R}^\eps_{\mathrm{ini}}$ \eqref{eq:Repsini}
and $\mathcal{S}^\eps f$ \eqref{eq:defSReps}, we verify that
%\begin{equation*}
%\mathcal{R}^\eps_{\mathrm{ini}}
%= 
%\,\eps^{\alpha+1} 	 \big(  \dtt u^{\alpha+1} + \calA_{xy}u^{\alpha+2}  +\calA_{xx}u^{\alpha+1} 	\big)
%\,{+}\,\,\eps^{\alpha+2} \big(  \dtt u^{\alpha+2} + \calA_{xx}u^{\alpha+2}\big).
%\end{equation*}
%\begin{equation*}
%\norm{\mathcal{R}_{\mathrm{ini}}^\eps\tildeub}_\Ld
%	\leq \eps^{\alpha+1} \Big( \norm{\chi^{\alpha+1}}_\Hu+\norm{\chi^{\alpha+2}}_\Hu\Big)
%		 \Big( 
%		 	    \norm{\nabxn{\alpha+1}\dtt\tildeub }_{\Hk{2}} 
%		 	+	\norm{\nabxn{\alpha+3}\tildeub }_{\Hk{2}} 
%		 	+ 	\norm{\nabxn{\alpha+3}\tildeub }_{\Hk{2}}
%		 	+ 	\norm{\nabxn{\alpha+2}\dtt\tildeub }_{\Hk{2}}
%		 	+ 	\norm{\nabxn{\alpha+4}\tildeub }_{\Hk{2}}
%		 \Big).
%\end{equation*}
\begin{equation}	\label{eq:proofapee_3}
\norm{\mathcal{R}_{\mathrm{ini}}^\eps\tildeub}_\Ld
\leq 	C\eps^{\alpha+1} \sum_{j=\alpha+1}^{\alpha+2} \Big(\norm{\nabxn{j+2}\tildeub}_{\Hk{2}} + \norm{\nabxn{j}\dtt\tildeub}_{\Hk{2}}	\Big),
%\end{equation}
%and
%\begin{equation*}	
%\norm{\mathcal{S}^\eps f}_\Ld
%	\leq  \sum_{k=1}^{\alpha} \epsilon^{k} 
%		\norm{\chi^k}_\Ld \big(\norm{\nabxn{k} f}_{\Hk{2}} + \norm{ \mathcal{S}^\eps_{N(k)} f}_{\Hk{2}}\big)
%	 \leq 
%	 C \sum_{k=1}^{\alpha} \epsilon^{k} \norm{\nabxn{k} f}_{\Hk{2}}
%	  + C \sum_{k=1}^{\alpha} \epsilon^{N(k)+k+2}\!\!\!\! \sum_{j=N(k)+k+2}^{2\lfloor\alpha/2\rfloor+N(k)+k}\norm{\nabxn{j}f}_{\Hk{2}}
%\end{equation*}
%\begin{equation}	
\qquad
\norm{\mathcal{S}^\eps f}_\Ld
\leq C\epsilon \sum_{j=1}^{2\lfloor\alpha/2\rfloor+\alpha}\norm{\nabxn{j}f}_{\Hk{2}}.
\end{equation}

\begin{remark} \label{rem:explanationQf4}
	From \eqref{eq:defSReps}, we verify that
	\[
	\mathcal{S}^\eps f 
	= \sum_{k=1}^\alpha \eps^k \chi^k\nabxn k f 
	+ \sum_{k=0}^\alpha \eps^k \chi^k \nabxn k \big( \mathcal{S}_{N(k)}^\eps f \big).
	\]	
	Referring to Remark \ref{rem:explanationQf2}, the definition of $Qf$ ensures that the second term has order $\mathcal{O}(\eps^{\alpha+1})$.
	Hence, $Qf$ ensures a lower constant in the second estimate in \eqref{eq:proofapee_3}.
\end{remark}

With these estimates, we are able to prove Theorem \ref{thm:est_epsm_alpha}.
\begin{proofof}{Theorem \ref{thm:est_epsm_alpha}}
	We have to estimate both error terms the right-hand side of \eqref{eq:errorSplit}.
	Using Lemma \ref{lem:errorestimateindepoftimeanddomain} and \eqref{eq:proofapee_3}, we verify that $\eqc\varphi$ satisfies the estimate
	\begin{equation*}	%\label{eq:estimatevarphi}
	\norm{\eqc\varphi}_{\Li(0,\eps^{-\alpha}T;\calWnorm)} 
	\leq \norm{\eqcbiss{\mathcal{S}^\eps f}}_{\Lu(0,\eps^{-\alpha}T;\calLdOm)}
	\leq C \eps \norm{f}_{\Lu(0,\eps^{-\alpha}T;\HkOm{r(\alpha)})}.
	\end{equation*}
	where
	$r(\alpha) = \alpha+2\lfloor\alpha/2\rfloor+2$.
	Hence, from the definition of $\BBeps\tildeub$, we deduce that 
	\begin{equation}	\label{eq:trivialestimate}
	\norm{\BBeps\tildeub-\eqcbiss{\tildeub} }_{\Li(\calWnorm)}
	\leq C \eps \bigg( 
	%\textstyle
	\sum_{k=1}^{\alpha+2}\seminorm{\tildeub}_{\Li(0,\eps^{-\alpha}T;\HkOm{k})}
	+ \norm{f}_{\Lu(0,\eps^{-\alpha}T;\HkOm{r(\alpha)})}	\bigg).
	\end{equation}
	Next, applying Lemma \ref{lem:errorestimateindepoftimeanddomain} with $\eqc \eta^\eps(t) = \BBeps\tildeub(t)-\eqcbiss{\ueps(t)}$ (see \eqref{eq:proofapee_3}), we obtain
	\begin{equation}	\label{eq:proofest1_alpha}
	\norm{\eqc\eta}_{\Li(\calWnorm)} 
	\leq C \eps \bigg( 
	%	\textstyle
	\sum_{k=\alpha+1}^{r(\alpha)+2}
	\seminorm{\tildeub}_{\Li(0,\eps^{-\alpha}T;\HkOm{k})} 
	+ \norm{\dtt\tildeub}_{\Li(0,\eps^{-\alpha}T;\HkOm{r(\alpha)})}
	+ \norm{\cini}_{\Hk{\alpha+2}(\Om)}  + \norm{\dini}_{\Hk{\alpha+2}(\Om)}  
	\bigg),
	\end{equation}
	where we used that for $\alpha\geq 2$, $r(\alpha) \geq \alpha+4$.
	Using \eqref{eq:trivialestimate} and \eqref{eq:proofest1_alpha} in \eqref{eq:errorSplit} proves estimate \eqref{eq:errorEstimateThm}
	and the proof of Theorem \ref{thm:est_epsm_alpha} is complete.
\end{proofof}

% % % % % % % % % % % % % % % % % % % % % % % % % % % % % % % % % % % % % % % % % % % % %
% SECTION % % % % % % % % % % % % % % % % % % % % % % % % % % % % % % % % % % % % % % % %
% % % % % % % % % % % % % % % % % % % % % % % % % % % % % % % % % % % % % % % % % % % % %

\subsection{A symmetrized tensor product of symmetric positive definite matrices is positive definite
	(proof of Lemma \ref{lem:tensorProdOfA0})}
\label{subsec:signOfEvenOrderTensor} 

Lemma \ref{lem:tensorProdOfA0} states that the tensor $S^{2n}(\otimes^n a^0)$ is positive definite, where $a^0$ is the homogenized tensor.
We prove here that this property is true for any symmetric, positive definite matrix.

A first important result is the following.
\begin{lemma}	\label{lem:posdeftensor_general_alpha}
	Let $R\in \Ten^{2n}(\R^d)$ be a positive definite tensor and let $A\in\Sym^2(\R^d)$ be a symmetric, positive definite matrix.
	Then the tensor of $\Ten^{2n+2}(\R^d)$ defined by $A_{i_1i_{2n+2}} R_{i_2\cdotss i_{2n+1}}$ is positive definite.
\end{lemma}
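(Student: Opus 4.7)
The plan is to reduce positive definiteness of the $(2n+2)$-tensor $T_{i_1\cdots i_{2n+2}}=A_{i_1 i_{2n+2}}R_{i_2\cdots i_{2n+1}}$ to the hypothesis on $R$, using the spectral decomposition of $A$ to decouple the outer indices. Fix an arbitrary $\xi\in\Sym^{n+1}(\R^d)$. I will unfold one index at a time: define the family of symmetric $n$-tensors
\[
\eta^{p}\in\Sym^{n}(\R^d),\qquad \eta^{p}_{j_1\cdots j_n}=\xi_{p\,j_1\cdots j_n},\qquad 1\le p\le d,
\]
which are symmetric in $j_1,\dots,j_n$ because $\xi$ is fully symmetric. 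Substituting the definition of $T$ into the pairing of \eqref{eq:tensorPositiveSemidef_alpha} and regrouping gives
\begin{equation*}
T\xi:\xi
= \sum_{p,q=1}^{d} A_{pq}\,R_{i_2\cdots i_{2n+1}}\,\eta^{p}_{i_2\cdots i_{n+1}}\,\eta^{q}_{i_{n+2}\cdots i_{2n+1}}
= \sum_{p,q=1}^{d} A_{pq}\,R\,\eta^{p}:\eta^{q}.
\end{equation*}

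The second step is to diagonalise $A$. Since $A\in\Sym^2(\R^d)$ is positive definite, write $A=\sum_{k=1}^{d}\lambda_k v_k v_k^{\!\top}$ with $\lambda_k>0$ and $\{v_k\}_{k=1}^{d}$ an orthonormal basis of $\R^d$. Define the symmetric $n$-tensors
\[
\zeta_k = \sum_{p=1}^{d}(v_k)_p\,\eta^{p}\in\Sym^{n}(\R^d),\qquad 1\le k\le d.
\]
Then a direct computation yields
\[
T\xi:\xi = \sum_{k=1}^{d}\lambda_k\,R\,\zeta_k:\zeta_k,
\]
which is nonnegative because every $\lambda_k>0$ and $R$ is positive definite (hence positive semidefinite). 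This gives positive semidefiniteness of $T$.

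The third step, which is the only real subtlety, is to upgrade to strict positive definiteness. Assume $T\xi:\xi=0$. Since $\lambda_k>0$ and $R\zeta_k:\zeta_k\ge 0$ for each $k$, we must have $R\zeta_k:\zeta_k=0$ for every $k$; the positive definiteness of $R$ then forces $\zeta_k=0$ in $\Sym^n(\R^d)$ for all $k$. The $d\times d$ matrix $V=[v_1\,\cdots\,v_d]$ is invertible (its columns form an orthonormal basis), and the relations $\zeta_k=\sum_p(v_k)_p\,\eta^{p}=0$ read, componentwise in $\Sym^n(\R^d)$, as a linear system $V^{\!\top}[\eta^{1},\dots,\eta^{d}]^{\!\top}=0$ with invertible matrix. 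Therefore $\eta^{p}=0$ for every $p$, i.e.\ $\xi_{p\,j_1\cdots j_n}=0$ for all indices, so $\xi=0$. This establishes the implication $T\xi:\xi=0\Rightarrow\xi=0$, concluding the proof.

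I do not anticipate any real obstacle; the only point to be careful about is ensuring the tensors $\eta^{p}$ are genuinely symmetric (so that $R\eta^{p}:\eta^{q}$ is a legitimate quantity to pair with $R$ in the sense of \eqref{eq:tensorPositiveSemidef_alpha}), which follows immediately from the full symmetry of $\xi\in\Sym^{n+1}(\R^d)$, and that the linear change of variables $\eta^{p}\mapsto\zeta_k$ is invertible, which is guaranteed by the non-degeneracy of the eigenbasis of $A$.
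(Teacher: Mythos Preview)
Your proof is correct and follows essentially the same strategy as the paper's: both factor the positive definite matrix $A$ and absorb one factor into each copy of $\xi$, reducing the question to the positive definiteness of $R$ acting on the resulting symmetric $n$-tensors, then use invertibility of the factor to conclude strictness. The only cosmetic difference is that you use the spectral decomposition $A=\sum_k\lambda_k v_k v_k^{\!\top}$ whereas the paper uses the Cholesky factorization $A=H^{\!\top}H$; your $\sqrt{\lambda_k}\,\zeta_k$ plays the role of the paper's $H_{rj}\xi_{j i_2\cdots i_{n+1}}$, and the argument is otherwise identical.
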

\begin{proof}
	As $A$ is symmetric positive definite, the Cholesky factorization provides an invertible matrix $H$ such that $A=H^TH$.
	For $\xi\in\Sym^{n+1}(\R^d)$, we thus have
	\begin{equation}	\label{eq:proof1_lemmaTensorPosDef_alpha}
	A_{i_1i_{2n+2}} R_{i_2\cdotss i_{2n+1}}	\xi_{i_1\cdotss i_{n+1}} \xi_{i_{n+1}\cdotss i_{2n+2}}
	%=H_{ri_1} H_{r_i{2n+2}} R_{i_2\cdotss i_{2n+1}}	\xi_{i_1\cdotss i_n} \xi_{i_{n+1}\cdotss i_{2n}}
	= R_{i_2\cdotss i_{2n+1}}	\big( H_{rj}\xi_{j i_2\cdotss i_{n+1}}\big)  \big( H_{rj} \xi_{j i_{n+2}\cdotss i_{2n+2}}\big)
	\geq 0.
	\end{equation}
	As $R$ is positive definite, the equality holds if and only if
	$H_{rj}\xi_{j i_2\cdotss i_{n+1}}=0$ for all $r,i_2,\hdots, i_{n+1}\in\{1,\hdots,d\}$.
	Let $i_2,\cdots,i_{n+1}$ be arbitrarily fixed and denote $v_j = \xi_{j i_2\cdotss i_{n+1}}$.
	Hence, we have $H_{rj}v_j=0$ for all $r$, which is equivalent to $H^Tv=0$.
	As $H^T$ is regular, we obtain that $v=0$.
	We have proved that the equality in \eqref{eq:proof1_lemmaTensorPosDef_alpha} implies $\xi=0$.
	Hence the tensor is positive definite and the proof of the lemma is complete.
\end{proof}

With Lemma \ref{lem:posdeftensor_general_alpha} at hand, we are able to prove the following result, which implies Lemma \ref{lem:tensorProdOfA0}.

\begin{lemma}	\label{lem:posdeftensor_general2_alpha}
	If $A\in\Sym^2(\R^d)$ is a symmetric, positive definite matrix, then the tensor $S^{2n}(\otimes^n A)\in\Sym^{2n}(\R^d)$ is positive definite.
\end{lemma}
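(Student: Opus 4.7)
The plan is to decompose $S^{2n}(\otimes^n A)$ as an average over all perfect matchings of $\{1,\dots,2n\}$, contract against $\xi\otimes\xi$ for $\xi\in\Sym^n(\R^d)$, and show that each resulting contribution is a nonnegative square, with at least one summand strictly positive whenever $\xi\neq 0$. For this I write $A=B^2$ with $B=A^{1/2}$ the symmetric positive-definite square root, and introduce two auxiliary maps on symmetric tensors: the ``index-wise'' action $\Phi_B:\Sym^m(\R^d)\to\Sym^m(\R^d)$ defined by $\Phi_B(\eta)_{k_1\cdots k_m}=B_{k_1 i_1}\cdots B_{k_m i_m}\eta_{i_1\cdots i_m}$, which is invertible and satisfies $\|\Phi_B(\eta)\|^2=A_{i_1 j_1}\cdots A_{i_m j_m}\eta_{i_1\cdots i_m}\eta_{j_1\cdots j_m}$ via $B^T B=A$; and the partial contraction $\tau_A(\eta)_{i_3\cdots i_m}=A_{i_1 i_2}\eta_{i_1 i_2 i_3\cdots i_m}$, which by the symmetry of $\eta$ is independent of which two indices of $\eta$ are contracted.

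Because $A$ is symmetric, the $(2n)!$ permutations defining $S^{2n}$ regroup into the $(2n-1)!!$ perfect matchings $P$ of $\{1,\dots,2n\}$, each counted $2^n n!$ times, yielding
\begin{equation*}
S^{2n}(\otimes^n A)_{i_1\cdots i_{2n}}=\frac{1}{(2n-1)!!}\sum_{P}\prod_{\{a,b\}\in P}A_{i_a i_b}.
\end{equation*}
I then contract with $\xi\otimes\xi$ and, for each matching $P$, classify its pairs as within-left ($\subseteq\{1,\dots,n\}$), within-right, or crossing; counting shows the number of within-left pairs equals the number of within-right pairs, say $\ell(P)$. The key computation is that the contribution of $P$ equals $\|\Phi_B(\tau_A^{\ell(P)}(\xi))\|^2$: each within-left pair contracts two indices of the first copy of $\xi$ with $A$, and by the symmetry of $\xi$ iterating these $\ell(P)$ contractions produces $\tau_A^{\ell(P)}(\xi)$ regardless of which indices are chosen; the within-right pairs do the same to the second copy; and the $n-2\ell(P)$ crossing pairs, after relabelling via the symmetry of $\tau_A^{\ell(P)}(\xi)$, tie the two resulting tensors together through products of $A$, producing exactly the expression $\|\Phi_B(\tau_A^{\ell(P)}(\xi))\|^2$.

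Summing then gives
\begin{equation*}
S^{2n}(\otimes^n A)\xi:\xi = \frac{1}{(2n-1)!!}\sum_{P}\|\Phi_B(\tau_A^{\ell(P)}(\xi))\|^2 \geq \frac{n!}{(2n-1)!!}\|\Phi_B(\xi)\|^2,
\end{equation*}
where the lower bound isolates the $n!$ all-crossing matchings ($\ell(P)=0$) and is strictly positive for $\xi\neq 0$ by the invertibility of $\Phi_B$. The main obstacle is that a naive induction from Lemma \ref{lem:posdeftensor_general_alpha} with $R=S^{2n}(\otimes^n A)$ does not close: the unsymmetrized tensor $A_{i_1 i_{2n+2}}R_{i_2\cdots i_{2n+1}}$ is indeed positive definite, but it defines a genuinely different quadratic form on $\Sym^{n+1}(\R^d)$ than its full symmetrization $S^{2n+2}(\otimes^{n+1}A)$ does. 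The pairing decomposition handles every index arrangement at once, and it is truly needed: already for $n=2$ the unsymmetrized form $(\otimes^2 A)\xi:\xi=(\mathrm{tr}(A\xi))^2$ is only semidefinite, so positive definiteness only emerges after full symmetrization.
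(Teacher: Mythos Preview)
Your proof is correct and takes a genuinely different route from the paper's. The paper argues by induction on $n$, using Lemma~\ref{lem:posdeftensor_general_alpha}: when $n$ is odd, every monomial in the expansion of $S^{2n}(\otimes^n A)\xi:\xi$ has at least one ``crossing'' factor $A_{jk}$ with one index in each copy of $\xi$, and the paper appeals to the induction hypothesis together with Lemma~\ref{lem:posdeftensor_general_alpha} for positivity; the even case adds the purely-within terms, which are visibly perfect squares. Your approach is direct and non-inductive: you write $S^{2n}(\otimes^n A)$ as an average over the $(2n-1)!!$ perfect matchings and, via the square root $B=A^{1/2}$ and the contraction $\tau_A$, identify the contribution of each matching $P$ as the explicit square $\|\Phi_B(\tau_A^{\ell(P)}\xi)\|^2\geq 0$. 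Strict positivity then comes cleanly from the $n!$ all-crossing matchings ($\ell=0$). This buys you an explicit quantitative lower bound $S^{2n}(\otimes^n A)\xi:\xi\geq \tfrac{n!}{(2n-1)!!}\|\Phi_B\xi\|^2$, which the inductive argument does not provide, and it is self-contained---Lemma~\ref{lem:posdeftensor_general_alpha} is not needed. Your closing remark is also on point: individual matchings with $\ell\geq1$ can vanish on nonzero $\xi$ (precisely when $\tau_A^\ell\xi=0$), so it is really the $\ell=0$ terms that carry strict positivity, and your decomposition makes this mechanism transparent.
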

\begin{proof}
	We proceed by induction on $n$. The case $n=1$ is ensured by \cite[Lemma 4.1]{AbP16} (it can also be deduced from Lemma \ref{lem:posdeftensor_general_alpha}).
	We assume that the result holds for $1,\hdots,n-1$ and prove it for $n$.
	Let $\xi\in\Sym^{n}(\R^d)\backslash \{0\}$.
	First, assume that $n$ is odd.
	Then, the product $S^{2n}(\otimes^n A)\xi : \xi$ is composed of terms of the form
	\begin{equation}\label{eq:prooflem:posdeftensor_general2_alpha}
	A_{jk} A_{i_1i_2}\cdots A_{i_{2n-3}i_{2n-2}} 
	\xi_{j i_1\cdots i_{n-1}}\xi_{k i_{n}\cdots i_{2n-2}},
	\end{equation}
	i.e., one of the factor $A_{i_ri_s}$ share on index with both entities of $\xi$.
	The induction hypothesis combined with Lemma \ref{lem:posdeftensor_general_alpha} ensure that all these terms are strictly positive and thus $S^{2n}(\otimes^n A)$ is positive definite.
	Second, we assume that $n$ is even.
	Then, the product $S^{2n}(\otimes^n A)\xi : \xi$ is composed of terms of two forms.
	First, there are terms of the form \eqref{eq:prooflem:posdeftensor_general2_alpha}.
	By the same induction argument as before, they are strictly positive.
	Second, terms of the form
	\[
	A_{i_1i_2}\cdots A_{i_{n-1}i_{n}} A_{i_{n+1}i_{n+2}} \cdots A_{i_{2n-1}i_{2n}} 
	\xi_{i_1\cdots i_{n}}\xi_{i_{n+1}\cdots i_{2n}}
	= \big( A_{i_1i_2}\cdots A_{i_{n-1}i_{n}}\xi_{i_1\cdots i_{n}}\big)^2
	\geq 0. 
	\]
	Altogether, we verify that $S^{2n}(\otimes^n A)\xi : \xi>0$ 
	and the proof of the lemma is complete.
\end{proof}

% % % % % % % % % % % % % % % % % % % % % % % % % % % % % % % % % % % % % % % % % % % % %
% % % % % % % % % % % % % % % % % % % % % % % % % % % % % % % % % % % % % % % % % % % % %

\subsection{Proof of the new relation between the correctors (Theorem \ref{thm:lemmawpeven_alpha})}	
\label{subsec:prooflemma_even}

We prove the result for $r\geq 1$.
Note that in the case $r=1$, we adopt the convention that empty sums vanish, i.e., $\sum_{k=1}^0 x_k=0$.
For the sake of clarity, we assume that $|Y|=1$ so that $\ps{\cdot,1}_Y = \intMean{\cdot}_Y$.

In Section \ref{subsec:derivationFamily_subsec}, we explained that the cell problems \eqref{eq:cellProblemsWave_compact} are well-posed if and only if $\Pten^{j} =_S \Dten^{j}$.
Let us then replace $\Pten^{j}$ by $\Dten^{j}$ in the expression of the cell problems \eqref{eq:cellProblemsWave_compact}.
For $r\geq 1$, let $\chi_1,\hdots,\chi^{2r+1}$ be the $2r+1$ first zero mean correctors.
We define the tensors $A^{k},B^k,C^k\in\Ten^{2r+2}(\R^d)$ as
\begin{align*}
A^{k}_{i_1\cdotss i_{2r+2}}
&= \psbig{ a\naby\chi^{k}_{i_1\cdotss i_{k}}, \naby\chi^{2r+2-k}_{i_{k+1}\cdotss i_{2r+2}} }_Y
&& 1\leq k\leq 2r+1,\displaybreak[0]\\
B^k_{i_1\cdotss i_{2r+2}}
&= \psbig{a e_{i_1} \chi^k_{i_2\cdotss i_{k+1}}, \naby\chi^{2r+1-k}_{i_{k+2}\cdotss i_{2r+2}} }_Y
&& 0\leq k\leq 2r,\displaybreak[0]\\
C^k_{i_1\cdotss i_{2r+2}}
&= \psbig{a e_{i_1} \chi^k_{i_3\cdotss i_{k+2}}, e_{i_2}\chi^{2r-k}_{i_{k+3}\cdotss i_{2r+2}} }_Y
&& 0\leq k\leq 2r,\displaybreak[0]\\
D^k
&= \sum_{j=0}^{k-1} \intMeanbig{\Dten^{k-j} \otimes \chi^k \otimes \chi^{2r-k} }_Y
&& 0\leq k\leq 2r.
\end{align*}
Note that the symmetry of $a$ ensures the following symmetry relations for $A^k$ and $C^k$:
\begin{equation}	\label{eq:proofRel_1}
A^{k+1} =_S A^{2r+1-k},
%\quad
%1\leq k\leq 2r+1,
\qquad
C^k =_S C^{2r-k}
\quad
0\leq k\leq 2r.
\end{equation}
Furthermore, using the test function $w=\chi^{2r+2-k}_{i_{k+1}\cdotss i_{2r+2}}$ in the cell problem for $\chi^k_{i_1\cdotss i_k}$ in \eqref{eq:cellProblemsWave_compact} and the symmetry of $a$, we obtain the following relations
\begin{equation}	\label{eq:proofRel_2}
\begin{aligned}
A^1 &=_S -B^0,\\
A^{k+1} &=_S -B^{k} + B^{2r+1-k} + C^{k-1} - D^{k-1}. \\
\end{aligned}
\end{equation}
Define then the tensor $T\in\Ten^{2r+2}(\R^d)$ as
\begin{equation} 	\label{eq::proof0_lemmawpeven_alpha}
T \coloneqq \sum_{k=0}^{2r} \sigma^k A^{k+1},
\qquad
\sigma^k = \left\{\begin{array}{ll} (-1)^{k+1} & \text{if } k\leq r \\ (-1)^{k}  & \text{if } k> r \end{array}\right..
\end{equation}
Using the definition of $\sigma^k$ and \eqref{eq:proofRel_1}, we verify that
\begin{equation} 	\label{eq::proof1_lemmawpeven_alpha}
T 
= \sum_{k=0}^{r} (-1)^{k+1} A^{k+1} + \sum_{m=r+1}^{2r} (-1)^{m} A^{m+1}
= \sum_{k=0}^{r} (-1)^{k+1} A^{k+1} + \sum_{k=0}^{r-1} (-1)^{2r-k} A^{2r+1-k}
=_S (-1)^{r+1} A^{r+1},
\end{equation}
where in the second equality we changed the index $k=2r-m$.
Using \eqref{eq:proofRel_2}, we decompose the tensor $T$ as
\begin{equation} 	\label{eq::proof2_lemmawpeven_alpha}
T = \sigma^0A^1 + \sum_{k=1}^{2r} \sigma^k A^{k+1}
=_S B^0 + U^1 + U^2 + U^3,
\end{equation}
where the tensors $U^1$, $U^2$ and $U^3$ are
\begin{equation*}
U^1 = \sum_{k=1}^{2r}\sigma^k \big(-B^{k} + B^{2r+1-k}\big),
\qquad
U^2 = \sum_{k=1}^{2r}\sigma^k C^{k-1},
\qquad
U^3 = \sum_{k=1}^{2r}(-\sigma^k) D^{k-1}.
\end{equation*}
The rest of the proof relies on the following result.
\def\rela{\textit{(i)}}
\def\relb{\textit{(ii)}}
\def\relc{\textit{(iii)}}
\begin{lemma}	\label{lem:proofRel_relU}
	The tensors $U^i$ defined above satisfy the following relations:
	\begin{equation*} %\label{eq:proofRel_relU}
	\begin{aligned}&
	\rela
	\quad
	U^1 = 0,
	\qquad
	\relb\quad
	U^2 =_S C^0 + (-1)^{r+1}C^r,
	\qquad
	\relc\quad
	U_3 =_S - \Hten^r,
	\end{aligned}
	\end{equation*}
	where $\Hten^r$ is the tensor defined in \eqref{eq:defpq_alpha}.
\end{lemma}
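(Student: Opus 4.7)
The plan is to treat the three relations separately by direct algebraic manipulation of the sums defining $U^1,U^2,U^3$, using the symmetry relations \eqref{eq:proofRel_1}, the vanishing $\Pten^{2s+1}=0$ from \eqref{eq:definitionPk}, the zero-mean of the correctors $\chi^k$ for $k\geq 1$, and the constraint $\Pten^{2s}=_S\Gten^{2s}$ from \eqref{eq:constaintcr_alpha} (so that $\Pten$-factors can be replaced by $\Dten$-factors up to $=_S$).

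For \textit{(i)}, reindexing the second sum in $U^1 = \sum_{k=1}^{2r}\sigma^k(-B^k+B^{2r+1-k})$ via $m = 2r+1-k$ and inspecting the piecewise definition of $\sigma^k$ in \eqref{eq::proof0_lemmawpeven_alpha} yields $\sigma^{2r+1-k} = \sigma^k$ for every $k\in\{1,\ldots,2r\}$ (separately in the two regimes $k\leq r$ and $k>r$), so the two contributions cancel term by term and $U^1 = 0$ as tensors. For \textit{(ii)}, splitting $U^2 = \sum_{k=1}^{2r}\sigma^k C^{k-1}$ at the transition of $\sigma^k$ between $k=r$ and $k=r+1$ and reindexing $\ell = k-1$ gives
\[
U^2 \;=\; \sum_{\ell=0}^{r-1}(-1)^\ell C^\ell \;+\; \sum_{\ell=r}^{2r-1}(-1)^{\ell+1}C^\ell;
\]
applying the symmetry $C^\ell =_S C^{2r-\ell}$ from \eqref{eq:proofRel_1} to the second sum and reindexing $m = 2r-\ell$ converts it into $\sum_{m=1}^r (-1)^{m+1}C^m$, and combining the two sums cancels every term with $1\leq \ell\leq r-1$, leaving exactly $C^0 + (-1)^{r+1} C^r$.

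For \textit{(iii)}, substituting the definition of $D^{k-1}$ (with $m=k-1$) and pulling the constant tensors $\Pten^{m-j}$ outside the cell average gives
\[
U^3 \;=\; -\sum_{m=0}^{2r-1}\sigma^{m+1}\sum_{j=0}^{m} \Pten^{m-j}\otimes \intMeanbig{\chi^j\otimes \chi^{2r-m}}_Y.
\]
The $j=0$ contributions vanish by zero-mean of $\chi^{2r-m}$ (since $2r-m\geq 1$), and only pairs $(m,j)$ of the same parity survive because $\Pten^{2s+1}=0$. Splitting into the odd-odd and even-even subcases, I would parametrize $j=2j'-1,\,m=2m'-1,\,\ell = r-m'+1$ in the first (so $\chi^{2r-m} = \chi^{2\ell-1}$) and $j=2j',\,m=2m',\,\ell = r-m'$ in the second (so $\chi^{2r-m} = \chi^{2\ell}$); replacing $\Pten^{2s}$ by $\Dten^{2s}$ up to $=_S$, each surviving summand becomes exactly one of the summands in the two double sums defining $\Hten^r$ in \eqref{eq:defpq_alpha}.

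The main obstacle is the final bookkeeping in \textit{(iii)}: the naive parametrization produces pairs $(j',\ell)$ ranging over the triangles $j'+\ell\leq r+1$ (odd case) and $j'+\ell\leq r$ (even case), whereas $\Hten^r$ is summed over the smaller squares $\{1,\ldots,\lceil r/2\rceil\}^2$ and $\{1,\ldots,\lfloor r/2\rfloor\}^2$. Reconciling the two uses the swap symmetry $\chi^a\otimes\chi^b =_S \chi^b\otimes\chi^a$ together with the piecewise change of $\sigma^{m+1}$ across $m=r$: a direct computation shows that pairs $(j',\ell)$ lying outside the appropriate square pair up with their $(\ell,j')$ twins (also outside, by symmetry of the constraint on $j'+\ell$), which carry the opposite sign and therefore cancel in the $=_S$ sense, whereas pairs lying inside the square all acquire the uniform coefficient $-1$ and combine with their in-square swaps to reproduce the full summand of $-\Hten^r$ with the stated multiplicity. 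Parts \textit{(i)} and \textit{(ii)} by contrast reduce to essentially one-line computations once the sign pattern of $\sigma^k$ is recorded; I would verify the final identity in \textit{(iii)} on the small cases $r=1,2,3$ as a sanity check against sign errors before writing it out in full.
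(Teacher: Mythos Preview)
Your handling of \textit{(i)} and \textit{(ii)} is essentially the paper's argument: the same reindexing $k\mapsto 2r+1-k$ (resp.\ $\ell\mapsto 2r-\ell$) together with the symmetry $C^\ell=_S C^{2r-\ell}$ from \eqref{eq:proofRel_1}.

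For \textit{(iii)} you take a genuinely different route. The paper does not expand $D^{k-1}$ immediately; instead it first reindexes $U^3$ as $\sum_{k=0}^{r-1}(-1)^{k+1}D^k+\sum_{k=1}^{r}(-1)^kD^{2r-k}$, then proves an auxiliary identity (Lemma~\ref{lem:symmetryD}) rewriting the second sum as $\sum_{k=1}^{r-1}(-1)^kD^k-H$ with $H=\sum_{j,k=1}^{r}(-1)^{k+1}\langle g^{2r-j-k}\otimes\chi^j\otimes\chi^k\rangle_Y$, so that everything collapses to $-D^0-H=-H$; a second short lemma (Lemma~\ref{lem:Gisgr}) then shows $H=_S h^r$ via the parity $g^{2s+1}=_S 0$. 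Your approach instead applies parity first and matches the surviving $(j',\ell)$-triangle directly against the squares in $h^r$ using the swap $\chi^a\otimes\chi^b=_S\chi^b\otimes\chi^a$. Both proofs rest on the same two ingredients---parity of $g$ and commutativity of $\otimes$ up to symmetry---just applied in a different order; yours is more self-contained but carries heavier index bookkeeping, while the paper's packages the combinatorics into a reusable intermediate identity on the $D^k$. Two small imprecisions in your sketch: the in-square coefficient is $-1$ only in the odd--odd block and $+1$ in the even--even block (both match $-h^r$, since the even double sum in $h^r$ already carries a minus sign), and no ``combining with in-square swaps'' is needed---each $(j',\ell)$ in the triangle appears exactly once, and the in-square ones match the summands of $-h^r$ one-to-one.
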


While the proof of \rela{} and \relb{} is direct, the proof of \relc{} requires preliminary work, done in the two following lemmas.
\def\HHten{H}
\begin{lemma}	\label{lem:symmetryD}
	The tensors $D^k$ satisfy 
	\begin{equation}	\label{eq:symmetryD}
	\sum_{k=1}^{r} (-1)^{k} D^{2r-k} =_S \sum_{k=1}^{r-1} (-1)^{k} D^{k} - \HHten,
	\qquad\text{where }~\HHten\coloneqq \sum_{j=1}^{r}\sum_{k=1}^{r} (-1)^{k+1} \intMeanbig{ \Gten^{2r-j-k} \otimes \chi^j\otimes\chi^k}_Y.
	\end{equation}
	where for $r=1$ the sum on the right-hand side vanishes.
\end{lemma}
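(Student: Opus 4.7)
The plan is to establish \eqref{eq:symmetryD} through a direct reindexing of the left-hand sum, followed by a symmetry comparison of the building blocks $D^k$. The substitution $k \mapsto 2r-k$ immediately recasts the LHS as $\sum_{m=r}^{2r-1}(-1)^m D^m$, so the identity is equivalent (modulo $S^{2r+2}$) to
$$\sum_{m=r}^{2r-1}(-1)^m D^m \;-\; \sum_{k=1}^{r-1}(-1)^k D^k \;=\; -\HHten.$$
The key observation is that the pair $\chi^m \otimes \chi^{2r-m}$ inside the definition of $D^m$ is symmetric under the swap $m \leftrightarrow 2r-m$ up to symmetrization, so $D^m$ and $D^{2r-m}$ share the same $\chi$-factor structure and differ only in the range of the inner index parametrizing the $\Dten$-contributions.

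Next, I would expand each $D^m$ using its definition, reindex the inner sum so that both $D^m$ and $D^{2r-m}$ appear as sums over a common multi-index of terms of the form $\intMean{\Dten^\bullet \otimes \chi^j \otimes \chi^{2r-m}}_Y$, and invoke the identification $\Dten^\bullet =_S \Gten^\bullet$ which holds under the solvability hypothesis for the cell problems (cf.\ \eqref{eq:constaintcr_alpha}). The portions of the two sides that match in range cancel, leaving a double sum of residuals indexed by $(j,k) \in \{1,\ldots,r\}^2$ of the form $\intMean{\Gten^{2r-j-k} \otimes \chi^j \otimes \chi^k}_Y$ with alternating signs $(-1)^{k+1}$, which is precisely the definition of $\HHten$.

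The main obstacle will be the combinatorial bookkeeping. The asymmetry in the summation bounds (running up to $r$ on the LHS but only to $r-1$ on the RHS) indicates that the ``diagonal'' contribution $k=r$, for which $D^{2r-k}=D^r$ trivially, has no natural counterpart and must be absorbed into $\HHten$ via the double-sum structure. Tracking the alternating signs $(-1)^k$ from the LHS against $(-1)^{k+1}$ inside $\HHten$, and exploiting the vanishing of odd-order $\Gten$-tensors from Lemma \ref{lem:lemmawpodd_alpha} to discard the summands with $j+k$ of the wrong parity, are the two technical hurdles. After these manipulations, I expect the argument to reduce to a termwise verification indexed by pairs $(j,k)$, at which point the identity becomes a matter of matching the signs and indices on both sides.
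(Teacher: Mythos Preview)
Your proposal is correct and follows essentially the same route as the paper's proof. The paper also introduces the building blocks $z^{j,k}=\intMean{\Dten^{k-j}\otimes\chi^j\otimes\chi^{2r-k}}_Y$, splits $D^{2r-k}$ into a part with $j\le r$ and a part with $j>r$, uses the symmetrization identity $z^{2r-\ell,2r-k}=_S z^{k,\ell}$ (your ``$\chi^m\otimes\chi^{2r-m}$ is symmetric under $m\leftrightarrow 2r-m$''), applies the parity observation $(-1)^k z^{k,\ell}=(-1)^\ell z^{k,\ell}$ coming from the vanishing of odd-order $\Dten$, and isolates the diagonal term $(-1)^r D^r$ to absorb it into $\HHten$---exactly the steps you outline. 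One minor remark: in this section $\Dten$ and $\Gten$ are literally the same symbol (the paper replaces $\Pten^j$ by $\Dten^j=\Gten^j$ at the outset of the proof of Theorem~\ref{thm:lemmawpeven_alpha}), so no separate identification is needed.
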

\begin{lemma}	\label{lem:Gisgr}
	The tensor $\HHten$ defined in \eqref{eq:symmetryD} satisfies $\HHten =_S \Hten^r$,
	where $\Hten^r$ is the tensor defined in \eqref{eq:defpq_alpha}.
\end{lemma}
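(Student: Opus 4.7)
My plan is to match $\HHten$ to $\Hten^r$ by splitting the double sum that defines $\HHten$ according to the parities of $j$ and $k$, discarding the odd-order $\Gten$ terms via Lemma \ref{lem:lemmawpodd_alpha}, and performing an obvious change of indices.

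The starting observation is that Lemma \ref{lem:lemmawpodd_alpha} yields $\Gten^{2s-1}=_S 0$ for every $s\geq 1$. Since $\HHten$ appears only under the equivalence $=_S$, all contributions with $2r-j-k$ odd can be dropped. The surviving terms are therefore those with $j+k$ even, i.e. either both $j$ and $k$ odd or both $j$ and $k$ even. In the first case the sign $(-1)^{k+1}$ equals $+1$ and in the second it equals $-1$, producing exactly the signature that appears in \eqref{eq:defpq_alpha}.

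Next I perform the changes of indices $j=2j'-1$, $k=2\ell'-1$ for the both-odd contribution and $j=2j'$, $k=2\ell'$ for the both-even one. The bound $1\leq j\leq r$ becomes $1\leq j'\leq \lceil r/2\rceil$ in the first case and $1\leq j'\leq \lfloor r/2\rfloor$ in the second, and the exponents transform as
\[
2r-(2j'-1)-(2\ell'-1) = 2(r-j'-\ell'+1), \qquad 2r-2j'-2\ell' = 2(r-j'-\ell').
\]
Substituting into the symmetrized expression for $\HHten$ gives, term by term, the two double sums in \eqref{eq:defpq_alpha}, so $\HHten =_S \Hten^r$.

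The only point requiring care is the parity argument: $\Gten^{2r-j-k}$ does not vanish for odd $2r-j-k$, only its full symmetrization does, which is precisely what is needed to drop those terms in an identity modulo $=_S$. The edge case $r=1$ is consistent with this reduction since the both-even sub-sum has empty index range ($\lfloor 1/2\rfloor = 0$), matching the convention that empty sums vanish that is adopted at the top of the section. No further estimates or inductive arguments are needed.
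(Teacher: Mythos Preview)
Your proof is correct and follows essentially the same route as the paper: both arguments invoke Lemma \ref{lem:lemmawpodd_alpha} to discard the contributions with $2r-j-k$ odd (using that the identity is only claimed up to $=_S$), then split the remaining terms according to whether $j,k$ are both odd or both even, and finally reindex to recover the two double sums in \eqref{eq:defpq_alpha}. Your write-up is in fact slightly more explicit than the paper's about the sign computation, the index bounds after reparametrization, and the $r=1$ edge case.
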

\begin{proofof}{Lemma \ref{lem:symmetryD}}
	Let us denote 
	$z^{j,k}= \intMeanbig{\Dten^{k-j} \otimes \chi^j \otimes \chi^{2r-k} }_Y$.
	As $\chi^0=1$ and the correctors have zero mean, we verify that 
	$z^{0,k} = z^{j,2r}= 0$.
	Hence, $D^k$ can be written as $D^k = \sum_{j=1}^k z^{j,k}$.
	Splitting the sum and making the change of index $\ell=2r-j$, we write for $1\leq k\leq r-1$
	\[
	D^{2r-k}
	=\sum_{j=1}^{2r-k} z^{j,2r-k}
	=\sum_{j=1}^{r} z^{j,2r-k}
	+\sum_{j=r+1}^{2r-k} z^{j,2r-k}
	=\sum_{j=1}^{r} z^{j,2r-k}
	+\sum_{\ell=k}^{r-1} z^{2r-\ell,2r-k}.
	\]
	As we verify that $z^{2r-\ell,2r-k} = _S z^{k,\ell}$, we can write
	\[
	\sum_{k=1}^{r} (-1)^{k} D^{2r-k}
	=_S (-1)^rD^r + V^1 + V^2,\qquad
	V^1 = \sum_{k=1}^{r-1} \sum_{j=1}^{r} (-1)^{k} z^{j,2r-k}
	,\quad
	V^2 = \sum_{k=1}^{r-1} \sum_{\ell=k}^{r-1}(-1)^{k} z^{k,\ell}.
	\]
	As $\Dten^{s}$ is nonzero only for even index (see \eqref{eq:definitionPk}), we have for all $s$,
	$(-1)^{s}\Dten^{s} = \Dten^{s}$.
	In particular, for all $s=k-\ell$, we have
	$(-1)^{k}\Dten^{k-\ell} = (-1)^{\ell}\Dten^{k-\ell}$
	which ensures that $(-1)^{k} z^{k,\ell} = (-1)^{\ell} z^{k,\ell}$.
	Hence, changing the summation order, we find
	\[
	V^2 
	= \sum_{k=1}^{r-1} \sum_{\ell=k}^{r-1}(-1)^{k} z^{k,\ell}
	= \sum_{\ell=1}^{r-1} \sum_{k=1}^{\ell} (-1)^{k} z^{k,\ell}
	= \sum_{\ell=1}^{r-1}(-1)^{\ell}  \sum_{k=1}^{\ell}  z^{k,\ell}
	= \sum_{\ell=1}^{r-1}(-1)^{\ell}  D^{\ell}.
	\]
	Furthermore, we verify that
	\[
	V^1+(-1)^rD^r = \sum_{k=1}^{r-1} \sum_{j=1}^{r} (-1)^{k} z^{j,2r-k} + \sum_{j=1}^{r} (-1)^{r} z^{j,r}
	= \sum_{k=1}^{r} \sum_{j=1}^{r} (-1)^{k} z^{j,2r-k} 
	= - \HHten.
	\]
	Summing the two last equalities we obtain \eqref{eq:symmetryD} and the proof is complete.
\end{proofof}
\begin{proofof}{Lemma \ref{lem:Gisgr}}
	Lemma \ref{lem:lemmawpodd_alpha} implies that $\Dten^{2s+1}=_S0$.
	Hence, the only nonvanishing terms in the double sum
	\[
	\HHten= \sum_{m=1}^{r}\sum_{n=1}^{r} (-1)^{n+1} \intMeanbig{ \Dten^{2r-(m+n)} \otimes \chi^m\otimes\chi^n}_Y,
	\]
	are $\{1\leq m,n\leq r : m+n \text{ is even} \}$.
	These terms are exactly given by
	\[
	\big\{m=2j,\, n=2k \,:\, 1\leq j,k\leq \lfloor r/2\rfloor \big\}
	\sqcup
	\big\{m=2j-1,\, n=2k-1 \,:\, 1\leq j,k\leq \lceil r/2\rceil \big\}.
	\]
	We verify that the expression of $\Hten^r$ in \eqref{eq:defpq_alpha} corresponds precisely to the sum on these two sets of index, i.e., $\HHten=_S\Hten^r$.
\end{proofof}

We now prove Lemma \ref{lem:proofRel_relU}.
\begin{proofof}{Lemma \ref{lem:proofRel_relU}}
	%\begin{proofof}{\rela}
	Let us start by proving \rela{}.
	Using the definition of $\sigma^k$, we	have
	\begin{align*}
	U^1 
	&= 	\sum_{k=1}^{r}(-1)^{k+1} \big(-B^{k} + B^{2r+1-k}\big)
	+\sum_{m=r+1}^{2r}(-1)^{m} \big(-B^{m} + B^{2r+1-m}\big)
	\\&= \sum_{k=1}^{r}(-1)^{k+1} \big(-B^{k} + B^{2r+1-k}\big)
	+\sum_{k=1}^{r}(-1)^{2r+1-k} \big(-B^{2r+1-k} + B^{k}\big),
	\end{align*}
	where in the second equality we changed the index $k=2r+1-m$.
	As $(-1)^{2r+1-k}=(-1)^{k+1}$, we obtain equality \rela{}.
	%\end{proofof}
	%\begin{proofof}{\relb}
	
	Next, we prove \relb{}.
	Using again the definition of $\sigma^k$, we find	
	\begin{equation*}
	U^2 
	= \sum_{k=1}^{r}(-1)^{k+1} C^{k-1} + \sum_{m=r+1}^{2r}(-1)^{m} C^{m-1}
	= \sum_{k=1}^{r}(-1)^{k+1} C^{k-1} + \sum_{k=2}^{r+1}(-1)^{2r+2-k} C^{2r+1-k},
	\end{equation*}
	where we changed the index $k=2r+2-m$.
	The symmetry \eqref{eq:proofRel_1} implies that $C^{2r+1-k} =_S C^{k-1}$.
	Hence, noting that $(-1)^{2r+2-k}= (-1)^{k}$, we obtain \relb{}.
	%\end{proofof}
	%\begin{proofof}{\relc}
	
	Finally, we prove \relc.
	With the definition of $\sigma^k$, we find	
	\begin{equation*}
	U^3
	= \sum_{n=1}^{r}(-1)^{n} D^{n-1} + \sum_{m=r+1}^{2r}(-1)^{m+1} D^{m-1}
	= \sum_{k=0}^{r-1}(-1)^{k+1} D^{k} + \sum_{k=1}^{r}(-1)^{2r+2-k} D^{2r-k},
	\end{equation*}
	where we made the changes of index $k=n-1$ and $k=2r+1-m$, respectively.
	As $(-1)^{2r+2-k} = (-1)^{k}$, 
	using Lemma \ref{lem:symmetryD} we obtain
	\[
	U^3 %= \sum_{k=0}^{r-1}(-1)^{k+1} D^{k} + \sum_{k=1}^{r}(-1)^{k} D^{2r-k}
	=_S \sum_{k=0}^{r-1}(-1)^{k+1} D^{k} + \sum_{k=1}^{r-1}(-1)^{k} D^{k} - \HHten
	= -D^{0}- \HHten.
	\] 
	As $\chi^0=1$ and the correctors have zero mean, we verify that $D^{0}=0$.
	Using Lemma \ref{lem:Gisgr}, we obtain $U^3=-\Hten^r$ and \relc{} is proved.
	%\end{proofof}
\end{proofof}

With Lemma \ref{lem:proofRel_relU} at hand we can prove Theorem \ref{thm:lemmawpeven_alpha}.
\begin{proofof}{Theorem \ref{thm:lemmawpeven_alpha}}
	Combining \eqref{eq::proof1_lemmawpeven_alpha}, \eqref{eq::proof2_lemmawpeven_alpha}
	and Lemma \ref{lem:proofRel_relU} yields
	\begin{equation}	\label{eq:proofRel_last}
	(-1)^{r+1} A^{r+1} =_S B^0 + U^1+U^2+U^3 =_S B^0+ C^0 +(-1)^{r+1}C^{r} - \Hten^r.
	\end{equation}
	We verify that the tensors $\Gten^{2r}$ and $\Kten^r$, defined in \eqref{eq:defbis_hr} and \eqref{eq:defpq_alpha}, respectively, can be written as%(recall that $\chi^0=1$)
	\[
	\Gten^{2r} = B^0+C^0,
	\qquad
	\Kten^r = -A^{r+1} + C^{r}.
	\]
	Hence, we deduce from \eqref{eq:proofRel_last} that
	\begin{equation*}	
	\Gten^{2r} =_S (-1)^{r} \big( - A^{r+1} + C^{r}\big)  + \Hten^r
	= (-1)^{r}\Kten^r  + \Hten^r.
	\end{equation*}
	This equality matches the decomposition \eqref{eq:decompostionhr}
	and the proof of Theorem \ref{thm:lemmawpeven_alpha} is complete.
\end{proofof}

% % % % % % % % % % % % % % % % % % % % % % % % % % % % % % % % % % % % % % % % % % % % %
% SECTION % % % % % % % % % % % % % % % % % % % % % % % % % % % % % % % % % % % % % % % %
% % % % % % % % % % % % % % % % % % % % % % % % % % % % % % % % % % % % % % % % % % % % %

\section{Conclusion} \label{sec:conclusion}

In this paper, we presented a family of effective equations for wave propagation in periodic media for arbitrary timescales.
In particular, for any given $\alpha\geq0$, our main result (Theorem \ref{thm:est_epsm_alpha}) ensures the effective solutions to be close to $\ueps$ in the $\Li(0,\eps^{-\alpha} T;\Wnorm)$ norm.
As emphasized, the effective equations are well-posed without requiring any regularization process.
In addition, we described a numerical procedure to compute the effective tensors of equations in the family.
We showed that the computational cost of this procedure can be significantly reduced by using a new relation between the correctors.
This relation should also be used to compute the tensors of the alternative effective model available in the literature. 

One question that is raised is how to find the best effective equation in the family?
More precisely, can we find a criterion to find an optimal equation in the family and can we build such an equation explicitly?
These interrogations also concern the effective models from \cite{ALR18} and \cite{BeG17}.
Answers to these questions are exciting topics left for future research.

% % % % % % % % % % % % % % % % % % % % % % % % % % % % % % % % % % % % % % % % % % % % %
% BIBLIOGRAPHY  % % % % % % % % % % % % % % % % % % % % % % % % % % % % % % % % % % % % % 
% % % % % % % % % % % % % % % % % % % % % % % % % % % % % % % % % % % % % % % % % % % % %

%\addcontentsline{toc}{section}{Bibliography}
%\bibliographystyle{spmpsci}
%\bibliography{./anmc}
\def\cprime{$'$}

% BibTeX users please use one of
%\bibliographystyle{spbasic}      % basic style, author-year citations
%\bibliographystyle{spmpsci}      % mathematics and physical sciences
%\bibliographystyle{spphys}       % APS-like style for physics
%\bibliography{}   % name your BibTeX data base

\end{document}